\newtheorem{corollary}{Corollary}
\newtheorem{theorem}{Theorem}
\newtheorem{remark}{\textbf{Remark}}
\newlength\myindent
\tikzset{
    block/.style = {draw, rectangle,
        minimum height=1cm,
        minimum width=1cm},
    input/.style = {coordinate,node distance=1.3cm},
    output/.style = {coordinate,node distance=2.3cm},
    arrow/.style={draw, -latex,node distance=2cm},
    pinstyle/.style = {pin edge={latex-, black,node distance=2cm}},
    sum/.style = {draw, circle, node distance=1cm}
}
\newtheorem{assumption}{Assumption}
\def\Var{{\rm Var}\,}
\newcommand{\bR}{\mathbb{R}}
\newcommand{\bE}{\mathbb{E}}
\newcommand{\bP}{\mathbb{P}}
\DeclarePairedDelimiter\abs{\lvert}{\rvert}\DeclarePairedDelimiter\norm{\lVert}{\rVert}\makeatletter
\let\oldabs\abs
\def\abs{\@ifstar{\oldabs}{\oldabs*}}
\let\oldnorm\norm
\def\norm{\@ifstar{\oldnorm}{\oldnorm*}}
\DeclareMathOperator*{\argmax}{argmax} 
\DeclareMathOperator*{\argmin}{argmin} 
\newtheorem{definition}{Definition}
\title{\LARGE \bf
A Hitting Time Analysis for Stochastic Time-Varying Functions with Applications to Adversarial Attacks on Computation of Markov Decision Processes
}
\author{Ali Yekkehkhany$^1$, Han Feng$^1$, Donghao Ying$^1$, Javad Lavaei}
\affil{
Department of Industrial Engineering and Operations Research -
University of California, Berkeley
}
\affil[$\ $]{\{aliyek, han\_feng, donghaoy, lavaei\}@berkeley.edu}
\date{}                     
\providecommand{\keywords}[1]
{
  \small	
  \textbf{\textit{Keywords---}} #1
}
\begin{document}
\maketitle
\begin{abstract}
{Stochastic time-varying optimization is an integral part of learning in which the shape of the function changes over time in a non-deterministic manner. This paper considers multiple models of stochastic time variation and analyzes the corresponding notion of hitting time for each model, i.e.,  the period after which optimizing the stochastic time-varying function reveals informative statistics on {the} optimization of the target function. 
The studied models of time variation are motivated by adversarial attacks on the computation of value iteration in Markov decision processes. In this application, the hitting time quantifies the extent that the computation is robust to adversarial disturbance. We {develop} upper bounds {on the} hitting time by analyzing the contraction-expansion transformation appeared in the time-variation models. We prove that the hitting time of the value function in the value iteration with a probabilistic contraction-expansion transformation is logarithmic in terms of the inverse of a desired precision.} {In addition,} the hitting time is analyzed for optimization of unknown {continuous or discrete} time-varying functions whose noisy evaluations are revealed over time.
{The upper bound for a continuous function is super-quadratic (but sub-cubic) in terms of the inverse of a desired precision and the upper bound for a discrete function is logarithmic in terms of the cardinality of the function domain.} Improved bounds for convex functions are obtained and we show that such functions are learned faster than non-convex functions. Finally, we study a time-varying linear model with additive noise, where hitting time is bounded with the notion of shape dominance. 


\end{abstract}
\keywords{Stochastic time-varying functions, stochastic operators, hitting time, probabilistic contraction-expansion mapping, probabilistic Banach fixed-point theorem, adversarial Markov decision process}

\section{Introduction and Related Work}
\label{introduction}
In many practical applications of optimization, such as those in the training of neural networks~\cite{sun2019optimization,gu2020implicit}, online advertising~\cite{bottou2013counterfactual}, decision-making process of power systems~\cite{mulvaney2020load,park2020homotopy}, and the real-time state estimation of nonlinear systems~\cite{rao2003constrained}, the parameters of the problem are often uncertain and change over time \cite{ajalloeian2020inexact}.
To put the time-varying and uncertainty of the systems into perspective in optimization problems, time-varying {or online optimization aims} to find the solution trajectories determined by
\begin{align}
x^*_t = \argmin_{x \in \mathcal{X}} \left\{  f_t(x) = \bE F_t(x, \xi) \right\}, \quad t \in \{1, 2, \dots \} \label{eq:varyingform},
\end{align}
where the random variable $\xi$ models the uncertainty in the objective that comes from disturbance, inexactness of model, use of small batches, or injected noise, {and where $\argmin$ denotes any global minimizer of the input function}.
Note that the expectation $\bE$ over $\xi$ can only be evaluated approximately since the probability distribution is unknown, {and therefore} the target function $f_t$ should be approximated by observed samples.
The estimate of the target function may not capture the shape of the target function given {a} limited number of observed samples. {However,} there is a point of time, {named} \emph{hitting time}, after which optimizing the estimated target function results in optimizing the target function up to some precision and confidence level.
The hitting time captures the stochastic complexity of the time-varying problem in \eqref{eq:varyingform}. 

        \begin{table*}[htb]
        \centering
        \caption{\label{tab:table1}Comparison of Selected Theorems in Sections II-III}
         {\begin{tabular}{ccc}
        \toprule
        	Theorem	&  Assumptions & Hitting Time Definition \\
        \midrule
        	  \ref{theorem_hitting_time}  & Assumptions \ref{assump:noise_N}-\ref{assump:granularity}, bounded difference functions & \eqref{hitting_time} \\ 
        	 \ref{theorem_hitting_time_convex}  & Assumptions \ref{assump:noise_N}-\ref{assump:gradient_lower_bound}, convex bounded difference functions  & \eqref{hitting_time} \\
        	 \ref{theorem_discrete_upper_bound}  & Assumptions \ref{assump:noise_N} and \ref{assump:minimum_distance}  & \eqref{hitting_time2} \\
        	 \ref{theorem_unimodal_discrete}  & Assumptions \ref{assump:noise_N} and \ref{assump:minimum_distance}, unimodal functions & \eqref{hitting_time2} \\
         	 \ref{thm:hitting}	& linear dynamics and shape dominance & \eqref{eq:hitting-time-def} \\
        \bottomrule
        \end{tabular}}
        \end{table*}
\subsection{Motivating Applications}
In order to motivate the analysis of hitting time for time-varying probabilistic transformations, {we first explain its} applications in Markov Decision Process (MDP) and reinforcement learning (RL).
Consider an MDP with the set of states (state space) $\mathcal{S}$, the set of actions (action space) $\mathcal{A}$, the time-invariant state transition $h$ such that $s_{k+1} = h(s_k, a_k, w_k)$, where $w_k$ for $k \in \{0, 1, \dots \}$ is a sequence of independent and identically distributed (i.i.d.) random variables, and the
immediate reward $r(s_k, a_k, w_k)$ received after taking action $a_k$ in state $s_k$.
A state-contingent decision policy is a mapping $\mu: \mathcal{S} \rightarrow \mathcal{A}$.
Given a discount factor $0 < q < 1$ and a policy $\mu$, the value function $V^\mu: \mathcal{S} \rightarrow \mathcal{R}$ is defined as
\begin{equation}
\label{eq:sum_MDP0}
    V^\mu (s) = \mathbb{E} \left [ \sum_{k = 0}^\infty q^k \cdot r(s_k, \mu(s_k), w_k) \bigg | s_0 = s \right ],
\end{equation}
where expectation is taken over $w_k$ for $k \geq 0$. Then, the optimal value function $V^*$ is defined by
\begin{equation}
\label{eq:sum_MDP}
    V^*(s) = \max_{\mu} V^\mu (s).
\end{equation}
For a finite action space, any policy $\mu^*$ given by
\begin{equation}
\label{eq:optimal_action}
    \mu^*(s) = \argmax_{a \in \mathcal{A}} \ \mathbb{E} \big [ r(s, a, w) + q \cdot V^*(h(s, a, w)) \big ]
\end{equation}
is optimal in the sense that $V^*(s) = V^{\mu^*} (s)$,
which gives rise to the Bellman equation
\begin{equation}
\label{eq:bellman_equation}
    V^*(s) = \max_{a \in \mathcal{A}} \ \mathbb{E} \big [ r(s, a, w) + q \cdot V^*(h(s, a, w)) \big ] \quad \forall s \in \mathcal{S},
\end{equation}
where $w$ is a random variable with the same distribution as $w_k$ for some $k$.
Define the Bellman operator $\mathcal{T}$ as
\begin{equation}
\label{eq:Bellman_operator}
    (\mathcal{T}V)(s) = \max_{a \in \mathcal{A}} \ \mathbb{E} \big [ r(s, a, w) + q \cdot V(h(s, a, w)) \big ]
\end{equation}
Starting from an arbitrary $V_0$, the value iteration method constructs a sequence $\{V_0, V_1, V_2, \dots\}$ with $V_{t + 1} = \mathcal{T}(V_t)$ for $t \in \{0, 1, \dots\}$.  { It is well known that the Bellman operator is a contraction mapping, which guarantees convergence to $V^*$. The optimal value function $V^*$ is unknown in MDP and RL applications. The value function $V_t$ is a time-varying function and may never be exactly equal to $V^*$. Moreover, $V_t$ is rarely computed exactly and is subject to adversarial attacks. We will introduce multiple models of attack and analyze the corresponding notion of hitting time for each model to be able to study the convergence of $V_t$.}

\subsection{Related Work}

\subsubsection{Approximate Dynamic Programming}
 {
The field approximate dynamic programming encompasses a wide range of techniques that overcomes the curse of dimensionality in the computation of Bellman operator. The adversarial attack model studied in this paper is motivated by the following approaches:
\begin{enumerate}[label=\Roman*., wide, labelindent=10pt] 
    \item \textbf{Approximation in computing expectation:}
    There are different approaches to circumventing the costly computation of expectation in  \eqref{eq:Bellman_operator}, e.g., 
    a) assuming certainty equivalence by replacing stochastic quantities with deterministic ones {to arrive at} a deterministic optimization,
    b) using Monte Carlo tree search and adaptive simulation to determine which expectations associated {with} actions should be computed more accurately \cite{dimitri2017dynamic, chang2013simulation, coulom2006efficient, browne2012survey, fu2017markov}.
    Both of these approaches introduce some errors in {the} expectation. 
    \item \textbf{Approximation in maximization:}
    The maximization in {the} Bellman operator in \eqref{eq:Bellman_operator} can be over a large number of actions, possibly a continuous action space with {an} infinite number of actions.
    In addition to the discretization of the action space, nonlinear programming techniques are prone to errors especially when they are used in an online fashion.
    \item \textbf{Approximation of value function:}
    Due to the large number of states in many recent applications of Markov decision processes and reinforcement learning, 
    parametric feature-based approximation methods{, such as} neural network architectures, are used for value function representation \cite{dimitri2017dynamic, van1998learning, tsitsiklis1996feature, van2006performance, busoniu2010reinforcement}.
    The parameterization of the value function is another source of error in value iteration that can cause expansion in value iteration \cite{tsitsiklis1996feature, van1998learning}.
    \item \textbf{Adversarial value iteration:}
    The emergence of cloud, edge, and fog computing means that large-scale MDP and RL problems will likely be solved by distributed servers \cite{satyanarayanan2017emergence, li2018learning, mach2017mobile}.
    This swift shift to edge reinforcement learning brings a host of new adversarial attack challenges that can be catastrophic in critical applications of autonomous vehicles and Internet of Things (IoT) in general \cite{isakov2019survey, ansari2020security, xiao2019edge}.
\end{enumerate}} 
The first three causes {have been} studied extensively in the literature \cite{powell2009you}, {while there is no mathematical analysis of} adversarial attacks on {the computation of the value functions}.

\subsubsection{Reinforcement Learning in Time-varying Environment}
{Consider} a reinforcement learning framework in which the model is being learned or {there is} a time-varying environment whose state transition probabilities and rewards change over time~\cite{liu2018solution}. An example of a time-varying environment is the changing environment at which autonomous vehicles interact with each other, human drivers, and pedestrians. In the context of reinforcement learning and Markov decision processes, this gradual change is translated into time-varying reward functions and transition probabilities. The relevance of time-varying functions to MDP and RL problems presented above is one of the many problems that can be described by time-varying functions whose hitting time analysis is of interest.
{Other applications of} a time-varying framework, {such as bandit optimization, model predictive control, and empirical risk minimization, are discussed in } \cite{fenghitting}.

\subsubsection{Scenario-based Approach for Optimization}
 {
Scenario-based approach for optimization~\cite{calafiore2005uncertain,campi2008exact,8299432} is concerned with decision making based on seen cases while having the ability to generalize to new situations. 
In this context, a bound on the violation probability captures the generalization of time-invariant decisions.
The hitting time defined in this paper is related to the violation probability. 
Our work departs from this line of research in that we study a sequence of time-varying functions instead of a time-invariant function, which can potentially be corrupted by an adversary, and seeking to constantly adjusting our understanding of the optimal solution.
The hitting time captures the time-varying aspect in our setting.
}
\subsubsection{Dynamical Systems}
 {
Our work is also related to asynchronous dynamical systems \cite{hassibi1999control}, which have been extensively studied in the literature.
Despite the mathematical resemblance, our work is different from this line of research since our focus is on analyzing the associated hitting times of different models and the dynamics considered in this work may not even be linear.
}

\subsection{Contributions}
We propose a probabilistic model of adversarial attacks, in which {both} expansion up to a constant and contraction occur with certain probabilities in iterates of {the} value iteration {method. We then study the hitting} time of such stochastic time-varying value functions in Section \ref{problem_statement}.  {We develop an upper bound on the hitting time under a time-varying contraction mapping with additive noise and develop an upper bound on the distance between the fixed point and the value function.}

%

In the rest of {this paper}, different models of stochastic time variation for continuous and discrete functions are studied in Sections \ref{problem_statement} and \ref{sec:discrete}, respectively.
In particular, probabilistic contraction-expansion mappings are studied in Section \ref{sec:prob_contraction_expansion},
time-varying probabilistic contraction-expansion mappings with additive noise are studied in Section \ref{sec:TV_prob_mapping},
time-varying continuous functions with additive noise are studied in Section \ref{section_optimization_TV_continuous_noise}, and improved bounds for convex functions with additive noise are studied in Section \ref{sec:improved_bounds_cts}.
Time-varying discrete functions with additive noise are studied in Section \ref{sec:problem_statement2}, improved bounds for unimodal functions with additive noise are studied in Section \ref{sec:hitting_time_analysis_convex2}, and a time-varying linear model with additive noise with the notion of shape dominance are studied in Section \ref{sec:linear-model}.
 {We summarize the theorems and the associated assumptions as well as the hitting times definitions in Table \ref{tab:table1}.
Finally, the }simulation results are presented in Section \ref{sec:simulation} and the {paper} is concluded in Section~\ref{sec:conclusion} in which a discussion of opportunities for future work is presented as well.

\section{The Hitting Time Analysis for Continuous Functions}
\label{problem_statement}
In this section,  {three variants of stochastic time-varying models are studied and their hitting times are analyzed}.
In the first model, a probabilistic contraction-expansion mapping is {analyzed}, where
the classical Banach fixed-point theorem cannot be applied to this model {due to the probabilistic contraction-expansion nature of the problem}.
In the  {second} model, a time-varying probabilistic contraction-expansion mapping with additive noise is {investigated}.
 {The above two models} are applicable to both continuous and discrete functions.
In the last model, an unknown time-varying continuous function is observed with additive noise whose estimated function changes over time.

 {To motivate the three stochastic time-varying models, we revisit the motivating example in the previous section, where a sequence of value functions $V_0, V_1, \dots $ is generated by the Bellman operator $\mathcal{T}$ defined in \eqref{eq:Bellman_operator}. 
Note that the theoretical proof of convergence {behind the} value iteration {method} depends heavily on the contraction mapping parameter $q$ and the fact that $d \big ( \mathcal{T}(V_{t + 1}), \mathcal{T}(V_t) \big ) \leq q \cdot d \big ( V_{t + 1}, V_t \big )$ deterministically{, where $d(\cdot, \cdot)$ is a translation-invariant distance function induced by a norm}.
However, in an online implementation of the value iteration with large state or action spaces, the actual calculation in practice may result in the value iteration {method} not to satisfy the contraction condition $d \big ( \mathcal{T}(V_{t + 1}), \mathcal{T}(V_t) \big ) \leq q \cdot d \big ( V_{t + 1}, V_t \big )$ in some iterations.
Instead, the distance may expand up to a factor greater than one in some iterations of the value iteration, i.e., $d \big ( \mathcal{T}(V_{t + 1}), \mathcal{T}(V_t) \big ) \leq Q \cdot d \big ( V_{t + 1}, V_t \big )$, where $Q \geq 1$. {In this problem,} the Bellman contraction mapping in value iteration may not be fixed anymore {and could} change over time.
Hence, instead of applying the same transformation $\mathcal{T}$ in value iteration, a time-varying transformation $\mathcal{T}_t$ for $t \in \{0, 1, \dots\}$ may be applied to value iteration. Section~\ref{sec:prob_contraction_expansion} formalizes this observation. }

\subsection{Probabilistic Contraction-Expansion Mapping}
\label{sec:prob_contraction_expansion}
Let $\left (X, \| \cdot \| \right )$ be a non-empty complete normed vector (linear) space, known as a Banach space, over the field $\mathbb{R}$ of real scalars, where $X$ is a vector space, e.g., a function space, together with a norm $\| \cdot \|$.
The norm induces a translation invariant distance function, called canonical induced metric, as $d(f, g) = \|f - g\|$.
Let $\| f \| = \langle f, f \rangle^{1/2}$, where the inner product of $f, g \in X$ in general is defined by $\langle f, g \rangle = \int f(x) g(x) dx$.
Consider a contraction mapping $\mathcal{T}: X \rightarrow X$ with the property that for all $f, g \in X$, there exists {a scalar $q \in [0, 1)$} such that
\begin{equation}
\label{contraction_deterministic}
d \big (\mathcal{T}(f), \mathcal{T}(g) \big ) \leq q \cdot d(f, g).
\end{equation}
{In light of} the Banach-Caccioppoli fixed-point theorem, {this} contraction mapping has its own unique fixed point, i.e., there exists $f^* \in X$ such that $\mathcal{T}(f^*) = f^*$.
Furthermore, starting with an arbitrary function $f^0 \in X$, the sequence $\{f^n\}$ with $f^n = \mathcal{T}(f^{n - 1})$ for $n \geq 1$ converges to $f^*$; in other words, $f^n \rightarrow f^*$, where $d \big ( f^*, f^n \big ) \leq \frac{q^n}{1 - q} \cdot d(f^1, f^0)$.
Note that in all iterations of the above value iteration,  {the mapping $\mathcal{T}$ operates as a contraction mapping according to \eqref{contraction_deterministic} with probability one.}
However, in the rest of this subsection, we consider a probabilistic version of the Banach fixed-point theorem, where the mapping either contracts or expands the distance between any two points in a probabilistic manner.

Consider the time-varying function $f_t \in X$ for $t \in \{ 0, 1, 2, \dots \}$ evolving over time according to
\begin{equation}
\label{time_varying_map_1}
    f_{t + 1} = \overline{\mathcal{T}}(f_t), \quad t \in \{ 0, 1, 2, \dots \},
\end{equation}
where $\overline{\mathcal{T}}$ is a probabilistic contraction-expansion mapping such that
\begin{equation}
\label{probabilistic_contraction_expansion}
d \big (\overline{\mathcal{T}}(f_{t+1}), \overline{\mathcal{T}}(f_t) \big ) \leq
\begin{cases}
      q \cdot d(f_{t + 1}, f_t) & \text{w.p.} \quad p\\
      Q \cdot d(f_{t + 1}, f_t) & \text{otherwise}
\end{cases}, \ \ \forall t \in \mathbb{N}_0 
\end{equation}
{for some constants $q \in [0, 1)$, $Q \geq 1$, and $p \in (0, 1]$, where w.p. stands for ``with probability'' and $\mathbb{N}_0$ is natural numbers with zero.}
 {{The expansion in \eqref{probabilistic_contraction_expansion} is caused by an adversary in an attempt to move the function sequence away from the fixed point.}
The contraction or expansion of $\overline{\mathcal{T}}$ is independent over time and $f^*$ is a fixed point of the mapping if $\overline{\mathcal{T}}(f^*) = f^*$.}
The shape of the function $f_t$ changes over time, but there can be a time, called hitting time $T$,  {at which $f_T$ reaches {a neighborhood of $f^*$,} as formally defined below.}

\begin{definition}
\label{def:hitting_time_probabilistic}
{Given $\epsilon > 0$ and $a \in (0, 1]$,} the hitting time $T(\epsilon, a)$ for the stochastic function sequence introduced in \eqref{time_varying_map_1} 
is defined as
\begin{equation}
    \label{hitting_time_NTVT}
    T(\epsilon, a) = \min \big \{ T : \mathbb{P} \left \{ d \big ( f_t, f^* \big ) < \epsilon \right \} \geq 1 - a, \ \forall t \geq T \big \},
\end{equation}
where $f^*$ is a fixed point {whose existence and uniqueness is proven in Theorem \ref{thm:probabilistic_banach} and $\mathbb{P}\{\cdot\}$ takes the probability of the input event}.
\end{definition}
 {
As a result, the complexity of optimizing {the} functions $f_t$ for $t < T$ can be irrelevant to the optimization complexity of {the} functions $f_t$ for $t \geq T$.
Consequently, the hitting time $T$ together with {the} optimization complexity of {any} function $f_t$ for $t \geq T$ captures the complexity of optimizing the time-varying sequence of functions $\{ f_t \}$.}
In the following theorem, the limiting behavior of the function sequence $\{ f_t \}$ is studied and an upper bound {on the} hitting time {is derived}.


\begin{theorem}
\label{thm:probabilistic_banach}
\textbf{Probabilistic Banach Fixed-Point Theorem.}
Let $\left (X, \| \cdot \| \right )$ be a non-empty complete normed vector space with a probabilistic contraction-expansion mapping $\overline{\mathcal{T}}: X \rightarrow X$ defined in \eqref{probabilistic_contraction_expansion} {such that $q^2 \cdot p + Q^2 \cdot (1 - p) < 1$}.
Starting with an arbitrary element $f_0 \in X$, the sequence $\{ f_t \}$ defined in \eqref{time_varying_map_1} converges to an element $f^* \in X$ with {an associated confidence level $1 - a$}, where $f^*$ is a unique fixed point for {the} mapping $\overline{\mathcal{T}}$.
Furthermore,  {for every $0 < L < \frac{\epsilon}{d \left ( f_1, f_0 \right )}$,}
the hitting time {$T(\epsilon, a)$ satisfies the inequality}
\begin{equation}
\label{theorem_equation_prob_Banach}
    \begin{aligned}
    T(\epsilon, a) \hspace{-0.5mm} \leq \hspace{-0.4mm} & \max \hspace{-0.5mm} \left\{ \hspace{-0.5mm} \frac{\ln \hspace{-0.5mm} \left ( \hspace{-1mm} \frac{a \cdot L^2 \hspace{-0.5mm}\cdot\hspace{-0.5mm} \big (1 - q \cdot p - Q \cdot (1 - p) \big ) \hspace{-0.5mm}\cdot\hspace{-0.5mm} \big ( 1 - q^2 \cdot p - Q^2 \cdot (1 - p) \big )}{ 1 + q \cdot p + Q \cdot (1 - p)} \hspace{-1mm} \right )}{\ln \big ( q^2 \cdot p + Q^2 \cdot (1 - p) \big )}\hspace{-0.5mm},\hspace{-0.5mm} \frac{\ln\hspace{-0.5mm} \left(\hspace{-0.5mm} \Big (\hspace{-0.5mm} \frac{\epsilon}{d \left ( f_1, f_0 \right )} \hspace{-0.5mm}-\hspace{-0.5mm} L \Big ) \hspace{-0.5mm}\cdot\hspace{-0.5mm} \Big (\hspace{-0.5mm} 1 - q \cdot p - Q \cdot (1 - p) \hspace{-0.5mm}\Big )\hspace{-0.5mm} \right )\hspace{-0.5mm}}{\ln \big (q \cdot p + Q \cdot (1 - p) \big )} \hspace{-0.5mm}\right\}\hspace{-0.5mm}.\hspace{-1mm}
    \end{aligned}
\end{equation}
\end{theorem}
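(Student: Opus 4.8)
The plan is to reduce the whole statement to a scalar random process, the consecutive‑iterate gap $D_t := d(f_{t+1}, f_t)$. Since $f_{t+1} = \overline{\mathcal{T}}(f_t)$, relation \eqref{probabilistic_contraction_expansion} reads $D_{t+1} = d(\overline{\mathcal{T}}(f_{t+1}), \overline{\mathcal{T}}(f_t)) \le c_t D_t$, where $(c_t)_{t \ge 0}$ is an i.i.d.\ sequence with $c_t = q$ w.p.\ $p$ and $c_t = Q$ otherwise, and $c_t$ is independent of $D_0, \dots, D_t$. Iterating gives $D_k \le D_0 P_k$ with $P_k := \prod_{i=0}^{k-1} c_i$ and $P_0 := 1$. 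Setting $\beta := \mathbb{E}[c_t] = qp + Q(1-p)$ and $\gamma := \mathbb{E}[c_t^2] = q^2 p + Q^2(1-p)$, independence yields $\mathbb{E}[P_k] = \beta^k$ and $\mathbb{E}[P_k^2] = \gamma^k$. Jensen's inequality gives $\beta^2 \le \gamma$, so the hypothesis $\gamma < 1$ also forces $\beta \in [0,1)$; this is exactly what keeps the factors $1-\beta$ in \eqref{theorem_equation_prob_Banach} positive.

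\emph{Existence, convergence, and uniqueness of $f^*$.} Because $\sum_{k \ge 0} \mathbb{E}[D_k] \le D_0 \sum_{k \ge 0} \beta^k = D_0/(1-\beta) < \infty$, the monotone convergence theorem gives $\sum_{k \ge 0} D_k < \infty$ almost surely, hence $(f_t)$ is almost surely Cauchy in the complete space $X$ and converges to a limit $f^*$ (in particular, in probability). Using that $\overline{\mathcal{T}}$ expands distances by at most the factor $Q$, one obtains $d(\overline{\mathcal{T}}(f^*), f_{t+1}) = d(\overline{\mathcal{T}}(f^*), \overline{\mathcal{T}}(f_t)) \le Q\, d(f^*, f_t) \to 0$, so $f_{t+1} \to \overline{\mathcal{T}}(f^*)$ and therefore $\overline{\mathcal{T}}(f^*) = f^*$. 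If $g^*$ is any fixed point, iterating \eqref{probabilistic_contraction_expansion} along the pair $(f^*, g^*)$ gives $d(f^*, g^*) \le d(f^*, g^*) \prod_{i=0}^{n-1} c_i$ almost surely; taking expectations yields $d(f^*, g^*) \le \beta^n\, d(f^*, g^*)$ for every $n$, hence $f^* = g^*$. The ``confidence level $1-a$'' is precisely the quantitative hitting‑time statement proved next.

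\emph{Hitting‑time bound.} From the triangle inequality $d(f_t, f_m) \le \sum_{k=t}^{m-1} D_k$ and $m \to \infty$ I get the master estimate $d(f_t, f^*) \le \sum_{k \ge t} D_k \le D_0 S_t$, where $S_t := \sum_{k \ge t} P_k$. Factoring $P_j P_k = P_{j \wedge k}^2 \prod_{i = j\wedge k}^{(j \vee k)-1} c_i$ and using independence gives $\mathbb{E}[P_j P_k] = \gamma^{j \wedge k} \beta^{|j-k|}$, and summing the resulting geometric series yields $\mathbb{E}[S_t] = \beta^t / (1-\beta)$ and $\mathbb{E}[S_t^2] = \gamma^t (1+\beta) / ((1-\gamma)(1-\beta))$. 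Now let $\tau := \epsilon / d(f_1, f_0)$ and fix $L \in (0, \tau)$. Then $\mathbb{P}\{d(f_t, f^*) \ge \epsilon\} \le \mathbb{P}\{S_t \ge \tau\}$, and as soon as $t$ is large enough that $\mathbb{E}[S_t] \le \tau - L$, one has $\{S_t \ge \tau\} \subseteq \{\,|S_t - \mathbb{E}[S_t]| \ge L\,\}$, so Chebyshev's inequality gives $\mathbb{P}\{S_t \ge \tau\} \le \mathbb{E}[S_t^2]/L^2$. Imposing simultaneously (i) $\mathbb{E}[S_t] \le \tau - L$, i.e.\ $\beta^t \le (1-\beta)(\tau - L)$, and (ii) $\mathbb{E}[S_t^2] \le a L^2$, i.e.\ $\gamma^t \le a L^2 (1-\beta)(1-\gamma)/(1+\beta)$, and solving each inequality for $t$ (in the regime where both right‑hand sides lie below $1$, the logarithms are negative and get divided by the negative quantities $\ln \beta$ and $\ln \gamma$) produces exactly the two arguments of the maximum in \eqref{theorem_equation_prob_Banach}; since both conditions must hold for all $t \ge T$, the hitting time is at most the larger of the two thresholds (up to rounding to an integer).

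I expect the second‑moment computation $\mathbb{E}[S_t^2]$ to be the delicate step: the $P_k$'s are heavily correlated (they share all their early factors), so the double sum must be split into its diagonal and two symmetric off‑diagonal parts and the nested geometric series evaluated carefully — this is what produces the factor $(1+\beta)/((1-\gamma)(1-\beta))$, hence the precise form of the first argument of the maximum. A second point needing care is the role of the auxiliary parameter $L$: it is introduced so as to split the single event $\{S_t \ge \tau\}$ into ``the mean is already safely below $\tau$'' (a condition governed by $\beta$, producing the $\ln\beta$ denominator) and ``the fluctuation around the mean is small'' (a condition governed by $\gamma$, producing the $\ln\gamma$ denominator). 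This decoupling is why the final bound is a maximum of two terms, and why the constraint $L < \epsilon/d(f_1, f_0)$ is needed so that $\tau - L > 0$.
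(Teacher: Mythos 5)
Your proposal is correct and follows essentially the same strategy as the paper: bound $d(f_t, f^*)$ by $d(f_1, f_0)\cdot S_t$ where $S_t$ is a sum of i.i.d.\ products, compute the first and second moments of $S_t$ (obtaining $\beta^t/(1-\beta)$ and $\gamma^t(1+\beta)/((1-\gamma)(1-\beta))$, exactly as the paper does), and split the event via Chebyshev around the auxiliary level $L$, yielding the same two inequalities and the same max. Your existence/convergence argument via $\sum_k \mathbb{E}[D_k] < \infty \Rightarrow \sum_k D_k < \infty$ a.s.\ is a cleaner route than the paper's ``Cauchy with probability $1-a$'' phrasing, and your direct double-sum evaluation of $\mathbb{E}[S_t^2]$ replaces the paper's recursion $\bar{S} = 1 + B_{m+1}\tilde{S}$, but these are presentational refinements rather than a different method.
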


\begin{proof}
In order to find an upper bound {on} the hitting time $T(\epsilon, a)$ defined in Definition \ref{def:hitting_time_probabilistic}, we first need to study  {the convergence behavior of} the function sequence $\{f_t\}$ in \eqref{time_varying_map_1} under the probabilistic contraction-expansion mapping $\overline{\mathcal{T}}$.
To this end, we prove that this function sequence is a Cauchy sequence with high probability.
{Given arbitrary} integer values $n$ and $m$ such that $n > m$, {one can write}
\begin{equation}
\label{distance_probabilistic}
\begin{aligned}
    d \big ( f_n, f_m \big ) = d \big ( \overline{\mathcal{T}}^n(f_0), \overline{\mathcal{T}}^m(f_0) \big ) 
    & \overset{(a)}{\leq} \hspace{-1.2mm} \sum_{i = 1}^{n - m}    d \big ( \overline{\mathcal{T}}^{n - i + 1}  (f_0), \overline{\mathcal{T}}^{n - i}  (f_0) \big )   
    =    \sum_{i = 1}^{n - m}    d \big ( \overline{\mathcal{T}}^{n - i}  (f_1), \overline{\mathcal{T}}^{n - i}  (f_0) \big ) \\
    & \overset{(b)}{\leq} \sum_{i = 1}^{n - m}  {\left(\prod_{j = 1}^{n - i} B_j\right)} \cdot d \big (f_1, f_0 \big ) =
    d \big (f_1, f_0 \big ) \cdot \sum_{i = 1}^{n - m} \prod_{j = 1}^{n - i} B_j,
\end{aligned}
\end{equation}
where triangular inequality is applied $n - m - 1$ times in $(a)$ and the independent and identically distributed random variables $B_j$ for $j \in \{1, 2, \dots, n - 1\}$ used in $(b)$ have the distribution
\begin{equation}
\label{B_distribution}
    B_j =
    \begin{cases}
      q & \text{w.p.} \quad p \\
      Q & \text{otherwise}
    \end{cases}.
\end{equation}
Next, we study the mean and variance of {the} random variable $S_{n, m} = \sum_{i = 1}^{n - m} \prod_{j = 1}^{n - i} B_j$ in \eqref{distance_probabilistic}.
Using the independence of $B_j$ for $j \in \{1, 2, \dots, n - 1 \}$, the mean can be  {upper-bounded} as
\begin{equation}
    \label{mean_prob}
    \begin{aligned}
    \mathbb{E} [S_{n, m}] = \mathbb{E} \left [ \sum_{i = 1}^{n - m} \prod_{j = 1}^{n - i} B_j \right ]
    = \sum_{i = 1}^{n - m} \prod_{j = 1}^{n - i} \mathbb{E} \left [ B_j \right ]
     = \sum_{i = 1}^{n - m} \big (q \cdot p + Q \cdot (1 - p) \big )^{n - i}  \leq \frac{\big (q \cdot p + Q \cdot (1 - p) \big )^m}{1 - q \cdot p - Q \cdot (1 - p) }.
    \end{aligned}
\end{equation}
On the other hand, $\Var(S_{n, m}) \leq \mathbb{E} \left [ S_{n, m}^2 \right ]$, {where $\Var(\cdot)$ takes the variance of the input random variable}, and the second moment of $S_{n, m}$ {will be}  {upper-bounded} {next}.
Note that
\begin{equation}
    \label{second_moment}
    \begin{aligned}
    S_{n, m} = B_1 \cdot B_2 \cdots B_m \cdot \big ( 1 + & B_{m + 1} + B_{m + 1} \cdot B_{m + 2} + \dots + B_{m + 1} \cdots B_{n - 1} \big ).
    \end{aligned}
\end{equation}
Let $\bar{S}_{n, m} = 1 + B_{m + 1} + B_{m + 1} \cdot B_{m + 2} + \dots + B_{m + 1} \cdots B_{n - 1}$, where  {$\bar{S}_{n, m}$ is a random variable independent of $B_j$ for $j \in \{1, 2, \dots, m \}$, and $\bar{S} = \lim_{n \rightarrow \infty} \bar{S}_{n, m}$.
We leave out the subscript $m$ since the limits $\lim_{n \rightarrow \infty} \bar{S}_{n, m}$ and $\lim_{n \rightarrow \infty} \bar{S}_{n, m^\prime}$ are identically distributed for all $m,m^\prime \geq 0$.
This is because $\bar{S}$ is an infinite sum and $\{B_j\}$ are i.i.d. random variables.
}
Since $\mathbb{E}[B_j] > 0$ for $j \geq 1$, {we have} $\mathbb{E} [ \bar{S}_{n, m}^2 ] \leq \mathbb{E} [\bar{S}^2]$; hence, {it follows from \eqref{second_moment} that}
\begin{equation}
    \label{S_n_m_second_moment}
    \begin{aligned}
    \mathbb{E} \left [ S_{n, m}^2 \right ]
    & = \mathbb{E} \left [ B_1^2 \right ] \cdots \mathbb{E} \left [ B_m^2 \right ] \cdot \mathbb{E} \left [ \bar{S}_{n, m}^2 \right ]  \leq \mathbb{E} \left [ B_1^2 \right ] \cdots \mathbb{E} \left [ B_m^2 \right ] \cdot \mathbb{E} \left [ \bar{S}^2 \right ].
    \end{aligned}
\end{equation}
In order to find an upper bound on $\mathbb{E} \left [ \bar{S}^2 \right ]$, we have
\begin{equation}
    \label{S_bar}
    \begin{aligned}
    \bar{S} = 1 + & B_{m + 1} \cdot (1 + B_{m + 2} + B_{m + 2} \cdot B_{m + 3} +  B_{m + 2} \cdot B_{m + 3} \cdot B_{m + 4} + \dots) = 1 + B_{m + 1} \cdot \Tilde{S},
    \end{aligned}
\end{equation}
where $\Tilde{S}$ is independent of $B_{m + 1}$, and {the} random variables $\bar{S}$ and $\Tilde{S}$ are identically distributed but not independent of each other.
{By} taking expectation on both sides of $\bar{S}^2 = (1 + B_{m + 1} \cdot \Tilde{S})^2$, {and} using the independence of $\Tilde{S}$ and $B_{m + 1}$ and the fact that $\mathbb{E} \big [ \bar{S}^2 \big ] = \mathbb{E} \big [ \Tilde{S}^2 \big ]$, {one can obtain}

\begin{equation}
    \label{second_moment_s_barr}
    \begin{aligned}
    & \mathbb{E} \left [ \bar{S}^2 \right ] = 1 + \mathbb{E} \left [ B_{m + 1}^2 \right ] \cdot \mathbb{E} \left [ \Tilde{S}^2 \right ] + 2 \mathbb{E} \left [ B_{m + 1} \right ] \cdot \mathbb{E} \left [ \Tilde{S} \right ] \Longrightarrow  \mathbb{E} \left [ \bar{S}^2 \right ] = \frac{1 + 2 \mathbb{E} \left [ B_{m + 1} \right ] \cdot \mathbb{E} \left [ \Tilde{S} \right ]}{1 - \mathbb{E} \left [ B_{m + 1}^2 \right ]}.
    \end{aligned}
\end{equation}
In the same way as finding the mean of $S_{n, m}$ in \eqref{mean_prob}, it is derived that $\mathbb{E} \left [ \Tilde{S} \right ] = \frac{1}{1 - q \cdot p - Q \cdot (1 - p)}$; furthermore, $\mathbb{E} \left [ B_{m + 1} \right ] = q \cdot p + Q \cdot (1 - p)$ and $\mathbb{E} \left [ B_{m + 1}^2 \right ] = q^2 \cdot p + Q^2 \cdot (1 - p)$.
As a result, if $q^2 \cdot p + Q^2 \cdot (1 - p) < 1$, Equation \eqref{second_moment_s_barr} results in
\begin{equation}
    \label{s_bar_second_momentt}
    \mathbb{E} \left [ \bar{S}^2 \right ] = \frac{1 + q \cdot p + Q \cdot (1 - p)}{ \big (1    -    q \cdot p    -    Q \cdot (1    -    p) \big )    \cdot    \big ( 1    -    q^2 \cdot p    -    Q^2 \cdot (1    -    p) \big )}.
\end{equation}
Using Equation \eqref{S_n_m_second_moment}, we have
\begin{equation}
    \label{variance_prob}
    \begin{aligned}
     \Var(S_{n, m})\leq \mathbb{E} \left [ S_{n, m}^2 \right ]
    \hspace{-0.5mm}\leq\hspace{-0.5mm} \big ( q^2 \cdot p + Q^2 \cdot (1 - p) \big )^m \hspace{-0.5mm}\times\hspace{-0.5mm}  \frac{1 + q \cdot p + Q \cdot (1 - p)}{ \big (1 - q \cdot p - Q \cdot (1 - p) \big ) \cdot \big ( 1 - q^2 \cdot p - Q^2 \cdot (1 - p) \big )}.
    \end{aligned}
\end{equation}
{So far, it is shown}
that $d \big ( \overline{\mathcal{T}}^n(f_0), \overline{\mathcal{T}}^m(f_0) \big ) \leq S_{n, m} \cdot d \big (f_1, f_0 \big )$, where $S_{n, m}$ is a random variable with {its} mean and variance  {upper-bounded in} \eqref{mean_prob} and \eqref{variance_prob}, respectively.
Using Chebyshev's inequality, for any $L > 0$, we have
\begin{equation}\label{chebyshev_inequality}
    \begin{aligned}
    & \mathbb{P} \left \{ | S_{n, m}  - \mathbb{E} [ S_{n, m} ] | \leq L \right \} \geq 1 - \frac{\Var(S_{n, m})}{L^2} \Longrightarrow \\
    & \mathbb{P} \left \{ S_{n, m} \leq \frac{\big (q \cdot p + Q \cdot (1 - p) \big )^m}{1 - q \cdot p - Q \cdot (1 - p) } + L \right \} \hspace{-1mm}\geq\hspace{-0.5mm}  1    -    \frac{ \big ( q^2 \cdot p + Q^2 \cdot (1 - p) \big )^m \cdot \big ( 1 + q \cdot p + Q \cdot (1 - p) \big )}{L^2    \cdot    \big (1    -    q    \cdot    p    -    Q    \cdot    (1    -    p) \big )    \cdot    \big ( 1    -    q^2    \cdot    p    -    Q^2    \cdot    (1    -    p) \big )}.
    \end{aligned}
\end{equation}
As a result, for any $\epsilon > 0$ and {$a \in (0, 1]$}, we have  {$d(f_n,f_m) = d \big ( \overline{\mathcal{T}}^n(f_0), \overline{\mathcal{T}}^m(f_0) \big ) \leq \epsilon$} with {the} confidence level $1 - a$ if $m$ satisfies the two inequalities
\begin{subequations}
\label{two_inequality_for_m}
\begin{eqnarray}
\label{two_inequality_for_m_a}
  \dfrac{ \big ( q^2 \cdot p + Q^2 \cdot (1 - p) \big )^m \cdot \big ( 1 + q \cdot p + Q \cdot (1 - p) \big )}{L^2 \cdot \big (1 - q \cdot p - Q \cdot (1 - p) \big ) \cdot \big ( 1 - q^2 \cdot p - Q^2 \cdot (1 - p) \big )} &\leq a \\
 \label{two_inequality_for_m_b}
  \left ( \dfrac{\big (q \cdot p + Q \cdot (1 - p) \big )^m}{1 - q \cdot p - Q \cdot (1 - p) } + L \right ) \cdot d \big ( f_1, f_0 \big ) &\leq \epsilon.
\end{eqnarray}
\end{subequations}
{Assume} that $d \big ( f_1, f_0 \big ) \neq 0$; otherwise, $f_0$ is a fixed point by definition.
Hence, {for} $0 < L < \frac{\epsilon}{d \left ( f_1, f_0 \right )}$, {if $q \cdot p + Q \cdot (1 - p) < 1$ and $q^2 \cdot p + Q^2 \cdot (1 - p) < 1$,} then the two inequalities in \eqref{two_inequality_for_m_a} and \eqref{two_inequality_for_m_b} are satisfied {when}
\begin{equation}
    \label{N_epsilon_value}
    \begin{aligned}
    m \geq \max \hspace{-0.5mm} \left\{ \hspace{-0.5mm} \frac{\ln \hspace{-0.5mm} \left ( \hspace{-1mm} \frac{a \cdot L^2 \hspace{-0.5mm}\cdot\hspace{-0.5mm} \big (1 - q \cdot p - Q \cdot (1 - p) \big ) \hspace{-0.5mm}\cdot\hspace{-0.5mm} \big ( 1 - q^2 \cdot p - Q^2 \cdot (1 - p) \big )}{ 1 + q \cdot p + Q \cdot (1 - p)} \hspace{-1mm} \right )}{\ln \big ( q^2 \cdot p + Q^2 \cdot (1 - p) \big )}\hspace{-0.5mm},\hspace{-0.5mm} \frac{\ln\hspace{-0.5mm} \left(\hspace{-0.5mm} \Big (\hspace{-0.5mm} \frac{\epsilon}{d \left ( f_1, f_0 \right )} \hspace{-0.5mm}-\hspace{-0.5mm} L \Big ) \hspace{-0.5mm}\cdot\hspace{-0.5mm} \Big (\hspace{-0.5mm} 1 - q \cdot p - Q \cdot (1 - p) \hspace{-0.5mm}\Big )\hspace{-0.5mm} \right )\hspace{-0.5mm}}{\ln \big (q \cdot p + Q \cdot (1 - p) \big )} \hspace{-0.5mm}\right\}.
    \end{aligned}
\end{equation}
 {Now, for every $\epsilon>0$ and $a\in (0,1]$, let $N_\epsilon$ be the constant on the right-hand side of \eqref{N_epsilon_value}. 
Then, with probability $1-a$, it holds that $\lim_{n \rightarrow \infty} d(f_n,f_{N_\epsilon})\leq \lim_{n \rightarrow \infty} S_{n,N_\epsilon} \cdot d \big (f_1, f_0 \big ) \leq \epsilon$. 
For all $n > m > N_\epsilon$, since $\{B_j\}$ are nonnegative, it holds that $S_{n,m}=B_{1} \cdot B_{2} \cdots B_{N_{\epsilon}} \cdot\left(B_{N_{\epsilon}+1} \cdots B_{m}+\cdots+B_{N_{\epsilon}+1} \cdots B_{n-1}\right)\leq B_{1} \cdot B_{2} \cdots B_{N_{\epsilon}} \cdot (1+B_{N_{\epsilon}+1}+B_{N_{\epsilon}+1}B_{N_{\epsilon}+2}+\dots) = \lim_{n \rightarrow \infty} S_{n,N_\epsilon}$, which implies $d(f_n,f_m)\leq S_{n,m}\cdot d \big (f_1, f_0 \big ) \leq \epsilon$ as long as $\lim_{n \rightarrow \infty} S_{n,N_\epsilon} \cdot d \big (f_1, f_0 \big ) \leq \epsilon$.
To conclude, the sequence $\{ f_t \}$ is a Cauchy sequence with probability $1-a$.}
{Since} the {vector} space {$X$} is complete, the sequence $\{ f_t \}$ converges to an element $f^*$ in the space with high probability.
Moreover, $f^*$ is a fixed point of the mapping $\overline{\mathcal{T}}$ since with high probability we have
\begin{equation}
    \label{fixed_point_probabilistic}
    \overline{\mathcal{T}} (f^*)
    = \overline{\mathcal{T}} ( \lim_{t \rightarrow \infty } f_t)
    \overset{(a)}{=} \lim_{t \rightarrow \infty } \overline{\mathcal{T}} ( f_t) \\
    = \lim_{t \rightarrow \infty } f_{t + 1} = f^*,
\end{equation}
where $(a)$ is true as the mapping $\overline{\mathcal{T}}$ is continuous due to \eqref{probabilistic_contraction_expansion}, which justifies bringing the limit outside the operator $\overline{\mathcal{T}}$.
Lastly, there cannot be more than one fixed point for {the} mapping $\overline{\mathcal{T}}$, which can be proved by contradiction.
Considering any pair of distinct fixed points $f_1^*$ and $f_2^*$, we have $d \big ( \overline{\mathcal{T}}(f_1^*), \overline{\mathcal{T}}(f_2^*) \big ) = d \big ( f_1^*, f_2^* \big )$ with probability {1}, which contradicts the fact that the distance between the mapped points contracts with a factor $q < 1$ with probability $p > 0$.


In {this} proof, both $q \cdot p + Q \cdot (1 - p) < 1$ and $q^2 \cdot p + Q^2 \cdot (1 - p) < 1$ must be satisfied {to ensure that Equations \eqref{two_inequality_for_m_a} and \eqref{two_inequality_for_m_b} hold for a large enough $m$}.
{However,} $q^2 \cdot p + Q^2 \cdot (1 - p) < 1$ {implies} $q \cdot p + Q \cdot (1 - p) < 1$ {since one can write}
\begin{equation}\label{eq:implication_of_assumption}
    \begin{aligned}
         (1 - p) \cdot (Q^2 - 2Q + 1) \geq 0  
        \Longrightarrow\quad& Q^2\quad \cdot (1 - p) - 2Q \cdot (1 - p) + 1 - p \geq 0  \\
        \overset{(a)}{\Longrightarrow}\quad& Q^2    \cdot    (1    -    p)^2    -    2Q \cdot (1 - p) + 1 \geq p \cdot \big (1 - (1 - p) \cdot Q^2 \big )  \\
        \overset{(b)}{\Longrightarrow}\quad& 1 - Q \cdot (1 - p) \geq p \cdot \sqrt{\frac{1 - Q^2 \cdot (1 - p)}{p}}  \\
        \overset{(c)}{\Longrightarrow}\quad& q \cdot p + Q \cdot (1 - p) < 1,
    \end{aligned}
\end{equation}
where $p - p \cdot (1 - p) \cdot Q^2$ is added on both sides of inequality in $(a)$, the square root is taken from both sides in $(b)$, and $q^2 \cdot p + Q^2 \cdot (1 - p) < 1$ is used in $(c)$ to draw the claimed conclusion.
\end{proof}

{Theorem \ref{thm:probabilistic_banach} states that if contraction of an operator in the iterates of the value iteration is compromised by an adversary via expansions in the iterates of value iteration, the value function sequence can still converge to the fixed point of the operator with high probability.
 {The standard Banach fixed-point theorem is a special case of Theorem \ref{thm:probabilistic_banach} by setting $p = 1$ and $L=0$.}
The analysis in the proof of this theorem suggests that the compromised operator being contractive on expectation is not enough for the convergence of the value function sequence with high probability since the introduced randomness to the operator by the adversary can lead to high variance in the elements of the value function sequence.
 {Hence, the additional assumption $q^2 \cdot p + Q^2 \cdot (1 - p) < 1$
is required to bound such a variance rooted from the expansion caused by the adversary.}
Furthermore, this theorem provides an upper bound on the number of rounds for value iteration to defeat the effect of the adversary that attempts to move the value function sequence away from the fixed point.
 {If the adversary is not modeled, the user who expects a normal scenario may perform {fewer iterations} of the value iteration. 
This can lead to a highly inaccurate estimate of the fixed point in the presence of an adversary.}
}

\begin{remark}
 {The parameter $L\in \left(0, \frac{\epsilon}{d \left ( f_1, f_0 \right )}\right)$ serves as an auxiliary parameter used in \eqref{chebyshev_inequality}.
We observe that the first term in the upper bound \eqref{theorem_equation_prob_Banach} is decreasing with respect to $L$ and the second term is increasing with respect to $L$.
By minimizing the bound \eqref{theorem_equation_prob_Banach} over $L$,
we have that  {$T(\epsilon,a)$ has the order $\mathcal{O}\left(\frac{d \left ( f_1, f_0 \right )}{\epsilon}\right)$}.}
\end{remark}

\subsection{Time-Varying Probabilistic Contraction-Expansion Mapping with Additive Noise}
\label{sec:TV_prob_mapping}

Let $(X, \| \cdot \| )$ be the same complete normed vector space as in Section \ref{sec:prob_contraction_expansion}.
Consider time-varying probabilistic contraction-expansion  {mappings} $\overline{\mathcal{T}}_t(\cdot): X \rightarrow X$ for $t \in \{ 0, 1, 2, \dots \}$ with parameters $p_t, q_t,$ and $Q_t$,  {i.e.,}
\begin{equation}
 {d \big (\overline{\mathcal{T}}_t(f), \overline{\mathcal{T}}_t(g) \big ) \leq
\begin{cases}
      q_t \cdot d(f, g) & \text{w.p.} \quad p_t\\
      Q_t \cdot d(f, g) & \text{otherwise}
\end{cases}, \ \ \forall t \in \mathbb{N}_0.}
\end{equation}
\noindent  {By Theorem \ref{thm:probabilistic_banach}, starting with an arbitrary function $f^0 \in X$,} the sequence $\{f^n\}$ with $f^n = \overline{\mathcal{T}}_t(f^{n - 1})$ for $n \geq 1$, where the same probabilistic contraction-expansion mapping $\overline{\mathcal{T}}_t$ is applied repeatedly, converges to $f_t^*$ with high probability.
 {
\begin{assumption}\label{assump:consecutive_fixed_point}
The fixed points of every two consecutive  {mappings} are at most $\epsilon_f > 0$ away from each other, i.e., $d \big ( f_t^*, f_{t - 1}^* \big ) \leq \epsilon_f$ for all $t \in \{ 1, 2, 3, \dots\}$.
\end{assumption}}
 {It is worth mention that, even under Assumption \ref{assump:consecutive_fixed_point}, there can be non-consecutive mappings $\overline{\mathcal{T}}_t$ and $\overline{\mathcal{T}}_{t'}$ whose fixed points {are arbitrarily far away from each other.}}
Note that in all iterations of the probabilistic value iteration, the same probabilistic contraction-expansion mapping $\overline{\mathcal{T}}_t$ is applied to the function sequence $\{f^n\}$.
However, in the {remainder} of this subsection, we consider a time-varying and noisy version of the probabilistic Banach fixed-point theorem, where the {underlying}  {mapping} changes over time and noise functions are added to the outcome of the mapping in each iteration.

Consider the time-varying function $f_t \in X$ for $t \in \{ 0, 1, 2, \dots \}$ evolving over time according to
\begin{equation}
\label{time_varying_map_2}
    f_{t + 1} = \widetilde{\mathcal{T}}_t(f_t) = \overline{\mathcal{T}}_t(f_t) + w_t, \ \ \ t \in \{ 0, 1, 2, \dots \},
\end{equation}
where $w_t \in X$ is  {some additive} noise.
 {\begin{assumption}\label{assump:noise}
The additive noise is uniformly upper-bounded by a constant $\epsilon_w>0$, i.e., $\| w_t \| \leq \epsilon_w$ for all $t \in \{0, 1, 2, \dots \}$.
\end{assumption}}
 {Note that the shape of the function $f_t$  {can change} over time and can be non-convex.
However, the following theorem shows that an upper bound can be established for the distance between $f_t$ and the time-{varying} fixed point $f_t^*$.
}

\begin{theorem}
\label{thm:contraction-expansion-noise}
Consider {arbitrary} time-varying probabilistic contraction-expansion mappings $\mathcal{T}_t$
with fixed points $f_t^*$, where $\sup_t \big ( q_t^2 \cdot p_t + Q_t^2 \cdot (1 - p_t) \big ) < 1$  {for $t \in \{0, 1, 2, \dots \}$.}
{Let} the time-varying function $f_t$ evolve over time according to the time-varying noisy probabilistic transformation in \eqref{time_varying_map_2}.
 {Under Assumptions \ref{assump:consecutive_fixed_point} and \ref{assump:noise}, it holds that}
\begin{equation}
    \begin{aligned}
    d \big ( f_t, f_t^* \big )
    \leq P_t \cdot d \big ( f_0, f_0^* \big ) + S_t \cdot ( \epsilon_f + \epsilon_w ),
    \end{aligned}
\end{equation}
where $P_t = \left ( \prod_{i = 0}^{t - 1} B_i \right )$ and $S_t = \left ( 1 + \sum_{i = 1}^{t - 1} \prod_{j = 1}^{t - i} B_j \right )$ are random variables with independent random variables $B_t$ having the distribution
\begin{equation}\label{eq:B_t}
    B_t =
    \begin{cases}
      q_t & \text{w.p.} \quad p_t\\
      Q_t & \text{otherwise}
    \end{cases}.
\end{equation}
The means and variances of $P_t$ and $S_t$ are  {upper-bounded} as
\begin{equation}
    \begin{aligned}
    \mathbb{E} \left [ P_t \right ]
    & \leq \left ( \sup_t \big ( q_t \cdot p_t + Q_t \cdot (1 - p_t) \big ) \right )^t \xrightarrow[]{t \rightarrow \infty} 0, \\
    \Var(P_t)
    & \leq \left ( \sup_t \big ( q_t^2 \cdot p_t + Q_t^2 \cdot (1 - p_t) \big ) \right )^t \xrightarrow[]{t \rightarrow \infty} 0,
    \end{aligned}
\end{equation}
and
\begin{equation}
    \begin{aligned}
    \mathbb{E} \left [ S_t \right ]
    & \leq \frac{1}{1 - \sup_t \big ( q_t \cdot p_t + Q_t \cdot (1 - p_t) \big )}, \\
    \Var    (S_t)    &    \leq    \frac{\big ( \bar{q}^2 \cdot \bar{p} + \bar{Q}^2 \cdot (1 - \bar{p}) \big ) \cdot \big ( 1 + \bar{q} \cdot \bar{p} + \bar{Q} \cdot (1 - \bar{p}) \big )}{\big (    1    -    \bar{q}^2    \cdot    \bar{p}    -    \bar{Q}^2    \cdot    (1    -    \bar{p}) \big )    \cdot    \big ( 1    -    \bar{q}    \cdot    \bar{p}    -    \bar{Q}    \cdot    (1    -    \bar{p}) \big )},
    \end{aligned}
\end{equation}
where {$\bar{q}$, $\bar{Q}$, and $\bar{p}$ satisfy} $\bar{q} \cdot \bar{p} + \bar{Q} \cdot (1 - \bar{p}) \geq \sup_{t \geq 1} \mathbb{E}[B_t]$ and $\bar{q}^2 \cdot \bar{p} + \bar{Q}^2 \cdot (1 - \bar{p}) \geq \sup_{t \geq 1} \mathbb{E}[B_t^2]$.

\end{theorem}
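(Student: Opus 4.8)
The plan is to collapse the whole statement onto a single scalar recursion for $d_t := d(f_t, f_t^*)$ and then unroll it. First I would bound one step. Writing $c := \epsilon_f + \epsilon_w$ and using translation invariance of $d$, the triangle inequality applied to $f_{t+1} = \overline{\mathcal{T}}_t(f_t) + w_t$ gives
\[
d_{t+1} \;=\; \big\| \overline{\mathcal{T}}_t(f_t) + w_t - f_{t+1}^* \big\| \;\le\; \big\| \overline{\mathcal{T}}_t(f_t) - f_t^* \big\| + \| w_t \| + \| f_t^* - f_{t+1}^* \| \;\le\; d\big( \overline{\mathcal{T}}_t(f_t), f_t^* \big) + \epsilon_w + \epsilon_f ,
\]
where the last inequality uses Assumptions \ref{assump:consecutive_fixed_point} and \ref{assump:noise}. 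Since $f_t^*$ is the fixed point of $\overline{\mathcal{T}}_t$, i.e.\ $f_t^* = \overline{\mathcal{T}}_t(f_t^*)$, applying the probabilistic contraction-expansion property to the pair $(f_t, f_t^*)$ yields $d\big(\overline{\mathcal{T}}_t(f_t), f_t^*\big) = d\big(\overline{\mathcal{T}}_t(f_t), \overline{\mathcal{T}}_t(f_t^*)\big) \le B_t \, d_t$, where $B_t$ is the (nonnegative) random multiplier at step $t$ — equal to $q_t$ on the contraction event of probability $p_t$ and to $Q_t$ otherwise — and the $B_t$ are independent across $t$; in particular $B_t$ is independent of $f_0,\dots,f_t$. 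Hence $d_{t+1} \le B_t d_t + c$.

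Next I would unroll this from $t$ down to $0$. Since the $B_i$ are nonnegative, iterating $d_{i+1}\le B_i d_i + c$ gives $d_t \le \big(\prod_{i=0}^{t-1} B_i\big) d_0 + c\,\big(1 + B_{t-1} + B_{t-1}B_{t-2} + \cdots + B_{t-1}\cdots B_1\big)$. The first coefficient is exactly $P_t$. The second is $1$ plus a sum, over $k=1,\dots,t-1$, of products of $k$ distinct, mutually independent multipliers — the same structural form as $S_t$ (a sum of products of $k$ distinct independent $B_j$, for $k=0,\dots,t-1$), and this structural form is all that the remaining moment estimates use, since they depend only on the uniform quantities $\sup_t \mathbb{E}[B_t]$ and $\sup_t \mathbb{E}[B_t^2]$. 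This establishes $d(f_t,f_t^*) \le P_t\, d(f_0,f_0^*) + S_t\,(\epsilon_f + \epsilon_w)$.

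It then remains to bound the first two moments. For $P_t$, independence gives $\mathbb{E}[P_t] = \prod_{i=0}^{t-1}\big(q_i p_i + Q_i(1-p_i)\big)$ and $\mathbb{E}[P_t^2] = \prod_{i=0}^{t-1}\big(q_i^2 p_i + Q_i^2(1-p_i)\big)$; bounding each factor by the corresponding supremum and using $\Var(P_t)\le \mathbb{E}[P_t^2]$ gives the stated bounds, and both vanish as $t\to\infty$ because, by Jensen's inequality (equivalently, the chain \eqref{eq:implication_of_assumption} applied termwise), $\sup_t\big(q_t p_t + Q_t(1-p_t)\big) \le \sqrt{\sup_t\big(q_t^2 p_t + Q_t^2(1-p_t)\big)} < 1$. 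For $S_t$, write $S_t = 1 + \sum_{k=1}^{t-1} Y_k$ with $Y_k := \prod_{j=1}^{k} B_j$; then $\mathbb{E}[S_t] = 1 + \sum_{k=1}^{t-1}\prod_{j=1}^{k}\mathbb{E}[B_j] \le \sum_{k=0}^{\infty}\big(\sup_t(q_t p_t + Q_t(1-p_t))\big)^k = \tfrac{1}{1-\sup_t(q_t p_t + Q_t(1-p_t))}$. For the variance I would expand $\Var(S_t) = \sum_{k,l}\mathrm{Cov}(Y_k, Y_l)$ and, for $k\le l$, use independence to get $\mathrm{Cov}(Y_k,Y_l) = \big(\prod_{j=k+1}^{l}\mathbb{E}[B_j]\big)\big(\prod_{j=1}^{k}\mathbb{E}[B_j^2] - \prod_{j=1}^{k}(\mathbb{E}[B_j])^2\big) \le \bar\beta_2^{\,k}\,\bar\beta_1^{\,l-k}$, with $\bar\beta_1 := \bar q\bar p + \bar Q(1-\bar p)$ and $\bar\beta_2 := \bar q^2\bar p + \bar Q^2(1-\bar p)$ (which can be taken $<1$ by the argument above). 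Summing the diagonal and twice the strict upper triangle and passing to infinite geometric series yields $\Var(S_t) \le \tfrac{\bar\beta_2}{1-\bar\beta_2} + 2\cdot\tfrac{\bar\beta_2}{1-\bar\beta_2}\cdot\tfrac{\bar\beta_1}{1-\bar\beta_1} = \tfrac{\bar\beta_2(1+\bar\beta_1)}{(1-\bar\beta_2)(1-\bar\beta_1)}$, which is the claimed bound.

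The hard part is the variance of $S_t$: because the multipliers are no longer identically distributed, the self-similar fixed-point identity $\bar S = 1 + B\,\widetilde S$ used in the proof of Theorem \ref{thm:probabilistic_banach} is unavailable, so one must expand the covariance sum directly, exploit that products over disjoint index sets factor, dominate each block by the uniform constants $\bar\beta_1,\bar\beta_2$, and carry out the geometric-series bookkeeping so that the bound collapses to the stated closed form (rather than the smaller but messier exact variance). A minor but necessary point is the independence bookkeeping in the one-step recursion — that $B_t$ is independent of the history $f_0,\dots,f_t$ — which is what makes the expectations of the unrolled products factor into products of one-dimensional moments.
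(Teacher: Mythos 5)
Your proof is correct, and it departs from the paper's route on the one nontrivial piece: the variance bound for $S_t$. The paper first introduces an auxiliary i.i.d.\ sequence $\{\bar B_j\}$ whose first two moments dominate those of all the $B_j$'s, then exploits the distributional self-similarity $\bar S = \bar B_1 (1 + \tilde S)$ of the infinite weighted geometric sum to extract a closed-form value of $\mathbb{E}[\bar S^2]$. You bypass both the dominating sequence and the fixed-point identity by expanding $\Var(S_t) = \sum_{k,l} \mathrm{Cov}(Y_k, Y_l)$ directly, using independence to factor each covariance over disjoint index blocks, bounding the block over $\{1,\dots,k\}$ by $\bar\beta_2^{\,k}$ and the block over $\{k+1,\dots,l\}$ by $\bar\beta_1^{\,l-k}$, and then summing the geometric series. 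This is more elementary and, incidentally, makes clearer why the argument is insensitive to whether the product coefficients in the unrolled bound are $B_1 \cdots B_k$ (as written in the theorem's definition of $S_t$) or $B_{t-1}\cdots B_{t-k}$ (as actually arises from unrolling $d_{i+1}\le B_i d_i + c$) — a small indexing mismatch in the paper that your formulation absorbs because the moment bounds only use the uniform quantities $\sup_t \mathbb{E}[B_t]$ and $\sup_t \mathbb{E}[B_t^2]$. The rest of your argument (the one-step triangle-inequality recursion, unrolling, and the bounds on $\mathbb{E}[P_t]$, $\Var(P_t)$, $\mathbb{E}[S_t]$) is essentially the same as the paper's, with the cosmetic simplification that you invoke Jensen to replace the paper's algebraic chain \eqref{eq:implication_of_assumption} showing $\sup_t \mathbb{E}[B_t] < 1$ follows from $\sup_t \mathbb{E}[B_t^2] < 1$.
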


\begin{proof}
Under the time-varying probabilistic contraction-expansion mappings with added noise functions introduced in \eqref{time_varying_map_2}, the distance between $f_t$ and $f_t^*$ can be  {upper-bounded} as
\begin{equation}
    \label{distance_at_time_t}
    \begin{aligned}
    d \big ( f_t, f_t^* \big ) 
    & = d \big ( \widetilde{\mathcal{T}}_{t - 1} \circ \cdots \circ \widetilde{\mathcal{T}}_0 (f_0), f_t^* \big )\\
    & \overset{(a)}{=} d \big ( \overline{\mathcal{T}}_{t - 1} \big ( \widetilde{\mathcal{T}}_{t - 2} \circ \cdots \circ \widetilde{\mathcal{T}}_0 (f_0) \big ) + w_{t - 1}, f_t^* \big ) \\
    & = \big \| \overline{\mathcal{T}}_{t - 1} \big ( \widetilde{\mathcal{T}}_{t - 2} \circ \cdots \circ \widetilde{\mathcal{T}}_0 (f_0) \big ) + w_{t - 1} - f_t^* \big \| \\
    & \overset{(b)}{\leq} d \big ( \overline{\mathcal{T}}_{t - 1} \big ( \widetilde{\mathcal{T}}_{t - 2} \circ \cdots \circ \widetilde{\mathcal{T}}_0 (f_0) \big ), f_t^* \big ) + \| w_{t - 1} \| \\
    & \overset{(c)}{\leq} d \big ( \overline{\mathcal{T}}_{t - 1} \big ( \widetilde{\mathcal{T}}_{t - 2}    \circ    \cdots    \circ    \widetilde{\mathcal{T}}_0 (f_0) \big ), f_{t - 1}^* \big )    +    d \big ( f_{t - 1}^*, f_t^* \big )    +    \| w_{t - 1} \| \\
    & \overset{(d)}{\leq} B_{t - 1} \cdot d \big ( \widetilde{\mathcal{T}}_{t - 2} \circ \cdots \circ \widetilde{\mathcal{T}}_0 (f_0), f_{t - 1}^* \big ) + \epsilon_f + \epsilon_w,
    \end{aligned}
\end{equation}
where  {$\circ$ denotes the composition of linear operators,} the definition of {the}  {mapping} $\widetilde{\mathcal{T}}_{t - 1}$ in \eqref{time_varying_map_2} is used in $(a)$, inequalities $(b)$ and $(c)$ are true by {the} triangular inequality, {and} $(d)$  {follows from Assumptions \ref{assump:consecutive_fixed_point} and \ref{assump:noise}} in addition to the probabilistic contraction-expansion {property of the} operator $\overline{\mathcal{T}}_{t - 1}$
and the fact that $\overline{\mathcal{T}}_{t- 1}(f_{t - 1}^*) = f_{t - 1}^*$.
Furthermore, the independent random variables $B_t$ for $t \geq 0$ used in $(d)$ have the distribution  {as specified in \eqref{eq:B_t}.}
Taking similar steps as in \eqref{distance_at_time_t}, we have
\begin{equation}
    \label{distance_at_time_t_2}
    \begin{aligned}
    d \big ( f_t, f_t^* \big ) 
    &    \leq B_{t - 1}    \cdot    \left (    B_{t - 2}    \cdot    d \big ( \widetilde{\mathcal{T}}_{t - 3}    \circ    \cdots    \circ    \widetilde{\mathcal{T}}_0 (f_0), f_{t - 2}^* \big )    +    \epsilon_f    +    \epsilon_w    \right )    +    \epsilon_f    +    \epsilon_w \\
    & \leq B_{t - 1} \cdot \Big ( B_{t - 2} \cdot \Big ( B_{t - 3} \cdot d \big ( \widetilde{\mathcal{T}}_{t - 4} \circ \cdots \circ \widetilde{\mathcal{T}}_0 (f_0), f_{t - 3}^* \big ) + \epsilon_f + \epsilon_w \Big ) + \epsilon_f + \epsilon_w \Big ) + \epsilon_f + \epsilon_w \\
    & \leq \left ( \prod_{i = 0}^{t - 1} B_i \right ) \cdot d \big ( f_0, f_0^* \big ) + \left ( 1 + \sum_{i = 1}^{t - 1} \prod_{j = 1}^{t - i} B_j \right ) \cdot ( \epsilon_f + \epsilon_w ) \\
    & \leq P_t \cdot d \big ( f_0, f_0^* \big ) + S_t \cdot ( \epsilon_f + \epsilon_w ),
    \end{aligned}
\end{equation}
where $P_t = \left ( \prod_{i = 0}^{t - 1} B_i \right )$ and $S_t = \left ( 1 + \sum_{i = 1}^{t - 1} \prod_{j = 1}^{t - i} B_j \right )$ are random variables whose means and variances {will be} calculated below.
Using the independence of random variables $B_t$ for $t \geq 0$, we have
\begin{equation}
    \label{mean_P_t}
    \begin{aligned}
    \mathbb{E} \left [ P_t \right ]
    &    =    \mathbb{E}    \left [ \prod_{i = 0}^{t - 1} B_i \right ]   
    =    \prod_{i = 0}^{t - 1} \mathbb{E} \left [ B_i \right ]   
    =    \prod_{i = 0}^{t - 1} \big ( q_t \cdot p_t + Q_t \cdot (1 - p_t) \big )  \leq \left ( \sup_t \big ( q_t \cdot p_t + Q_t \cdot (1 - p_t) \big ) \right )^t
    \end{aligned}
\end{equation}
and
\begin{equation}
    \label{variance_P_t}
    \begin{aligned}
     \Var(P_t)
    &= \mathbb{E} \left [ P_t^2 \right ] - \left ( \mathbb{E} \left [ P_t \right ] \right )^2 \\
    & = \mathbb{E} \left [ \prod_{i = 0}^{t - 1} B_i^2 \right ] - \prod_{i = 0}^{t - 1} \big ( q_t \cdot p_t + Q_t \cdot (1 - p_t) \big )^2 \\
    & \leq    \prod_{i = 0}^{t - 1}    \big ( q_t^2    \cdot    p_t    +    Q_t^2    \cdot    (1    -    p_t) \big ) \\  
    & \leq    \left (    \sup_t    \big ( q_t^2    \cdot    p_t    +    Q_t^2    \cdot    (1    -    p_t) \big )    \right )^t    .
    \end{aligned}
\end{equation}
Note that it is already shown in  \eqref{eq:implication_of_assumption} that $q_t^2 \cdot p_t + Q_t^2 \cdot (1 - p_t) < 1$ {implies} $q_t \cdot p_t + Q_t \cdot (1 - p_t) < 1$,  {and therefore it suffices to assume} that $\sup_t \big ( q_t^2 \cdot p_t + Q_t^2 \cdot (1 - p_t) \big ) < 1$.
Furthermore,
\begin{equation}
    \label{mean_S_t}
    \begin{aligned}
    \mathbb{E} \left [ S_t \right ] &= \mathbb{E} \left [ 1 + \sum_{i = 1}^{t - 1} \prod_{j = 1}^{t - i} B_j \right ] \\
    & = 1 + \sum_{i = 1}^{t - 1} \prod_{j = 1}^{t - i} \mathbb{E} \left [ B_j \right ]\\
    &= 1 + \sum_{i = 1}^{t - 1} \prod_{j = 1}^{t - i} \big ( q_j \cdot p_j + Q_j \cdot (1 - p_j) \big ) \\
    & \leq 1 + \sum_{i = 1}^{t - 1} \left ( \sup_j \big ( q_j \cdot p_j + Q_j \cdot (1 - p_j) \big ) \right )^{t - i} \\
    & \leq \frac{1}{1 - \sup_j \big ( q_j \cdot p_j + Q_j \cdot (1 - p_j) \big )}
    \end{aligned}
\end{equation}
and
\begin{equation}
    \label{variance_S_t}
    \begin{aligned}
    & \hspace{4.5mm} \Var(S_t)
    = \Var \left (1 + \sum_{i = 1}^{t - 1} \prod_{j = 1}^{t - i} B_j \right )
    = \Var \left ( \sum_{i = 1}^{t - 1} \prod_{j = 1}^{t - i} B_j \right ) \leq \mathbb{E} \left [ \left ( \sum_{i = 1}^{t - 1} \prod_{j = 1}^{t - i} B_j \right )^2 \right ].
    \end{aligned}
\end{equation}
Consider the sequence of independent and identically distributed random variables $\bar{B}_t$ for $t \in \{1, 2, \dots\}$ that have the distribution
\begin{equation}
    \bar{B}_t =
    \begin{cases}
      \bar{q} & \text{w.p. } \bar{p}\\
      \bar{Q} & \text{otherwise}
    \end{cases}
\end{equation}
 {such that} $\mathbb{E} [\bar{B}_t] \geq \sup_{i \geq 1} \mathbb{E}[B_i]$ and $\mathbb{E} [\bar{B}_t^2] \geq \sup_{i \geq 1} \mathbb{E}[B_i^2]$.
Proceeding with \eqref{variance_S_t}, {one can write}
\begin{equation}
\label{eq:var_S}
    \begin{aligned}
    & \hspace{4.5mm} \Var(S_t)
    \leq \mathbb{E} \left [ \left ( \sum_{i = 1}^{t - 1} \prod_{j = 1}^{t - i} B_j \right )^2 \right ]
    \leq \mathbb{E} \left [ \left ( \sum_{i = 1}^{t - 1} \prod_{j = 1}^{t - i} \bar{B}_j \right )^2 \right ] \leq \mathbb{E} \left [ \left ( \sum_{i = 1}^{\infty} \prod_{j = 1}^{i} \bar{B}_j \right )^2 \right ] = \mathbb{E} \left [ \bar{S}^2 \right ],
    \end{aligned}
\end{equation}
where $\bar{S} = \sum_{i = 1}^{\infty} \prod_{j = 1}^{i} \bar{B}_j$. We have $\mathbb{E} [\bar{S}] = \frac{\bar{q} \cdot \bar{p} + \bar{Q} \cdot (1 - \bar{p})}{1 - \bar{q} \cdot \bar{p} - \bar{Q} \cdot (1 - \bar{p})}$ and $\bar{S} = \bar{B}_1 \cdot (1 + \bar{B}_2 + \bar{B}_2 \cdot \bar{B}_3 + \cdots) = \bar{B}_1 \cdot (1 + \tilde{S})$, where $\tilde{S}$ is independent of $B_1$, and {the} random variables $\bar{S}$ and $\tilde{S}$ are identically distributed but not independent of each other.
Taking expectation on both sides of $\bar{S}^2 = \bar{B}_1^2 \cdot (1 + \tilde{S})^2$, and using the independence of $\tilde{S}$ and $B_1$ and the fact that $\mathbb{E}[\bar{S}^2] = \mathbb{E}[\tilde{S}^2]$, we have
\begin{equation}
\label{eq:second_moment_S}
    \begin{aligned}
        & \mathbb{E}[\bar{S}^2] = \mathbb{E}[\bar{B}_1^2] \cdot \mathbb{E}[1 + 2\tilde{S} + \tilde{S}^2]  = \big (\bar{q}^2 \cdot \bar{p} + \bar{Q}^2 \cdot (1 - \bar{p}) \big ) \times \left (1 + \frac{2 \big ( \bar{q} \cdot \bar{p} + \bar{Q} \cdot (1 - \bar{p}) \big )}{1 - \bar{q} \cdot \bar{p} - \bar{Q} \cdot (1 - \bar{p})} + \mathbb{E}[\tilde{S}^2] \right )  \\
        \Longrightarrow\  & \mathbb{E}[\bar{S}^2]    =    \frac{\big ( \bar{q}^2 \cdot \bar{p} + \bar{Q}^2 \cdot (1 - \bar{p}) \big ) \cdot \big ( 1 + \bar{q} \cdot \bar{p} + \bar{Q} \cdot (1 - \bar{p}) \big )}{\big ( 1    -    \bar{q}^2    \cdot    \bar{p}    -    \bar{Q}^2    \cdot    (1    -    \bar{p}) \big )    \cdot    \big ( 1    -    \bar{q}    \cdot    \bar{p}    -    \bar{Q} \cdot (1    -    \bar{p}) \big )}  .
    \end{aligned}
\end{equation}
Putting \eqref{eq:var_S} and \eqref{eq:second_moment_S} together, {it can be concluded that} $\Var(S_t) \leq \frac{\big ( \bar{q}^2 \cdot \bar{p} + \bar{Q}^2 \cdot (1 - \bar{p}) \big ) \cdot \big ( 1 + \bar{q} \cdot \bar{p} + \bar{Q} \cdot (1 - \bar{p}) \big )}{\big ( 1 - \bar{q}^2 \cdot \bar{p} - \bar{Q}^2 \cdot (1 - \bar{p}) \big ) \cdot \big ( 1 - \bar{q} \cdot \bar{p} - \bar{Q} \cdot (1 - \bar{p}) \big )}$, {which completes} the proof. 
\end{proof}

 {In the absence of the adversary, the probabilistic contraction-expansion mapping $\overline{\mathcal{T}}_t$ is purely a contraction with the rate $q_t$.
We obtain the following corollary as a direct consequence of Theorem \ref{thm:contraction-expansion-noise}.
\begin{corollary}
\label{cor:thm:time-varying_operator}
Consider {arbitrary} time-varying contraction mappings $\overline{\mathcal{T}}_t$ with {the} contraction constants $q_t$ and fixed points $f_t^*$.
{Suppose that $q = \sup_t q_t < 1$ and that Assumption \ref{assump:consecutive_fixed_point} holds.}
{{Let} the time-varying function $f_t$ evolve over time according to 
 \eqref{time_varying_map_2}.}
For $\epsilon>0$, we define the hitting time as $T(\epsilon) = \min \left \{ T : d \big ( f_t, f_t^* \big ) < \epsilon, {\ \forall t \geq T} \right \}$.
{If $\epsilon \in (\frac{1}{1 - q} \cdot (\epsilon_f + \epsilon_w), \frac{1}{1 - q} \cdot (\epsilon_f + \epsilon_w) + D]$, then}
\begin{equation}
    T(\epsilon) \leq 1 + {\ln \left ( {\left ( \epsilon - \frac{1}{1 - q} \cdot (\epsilon_f + \epsilon_w) \right )}\Big/{D} \right )}\bigg/{\ln(q)},
\end{equation}
where $\epsilon_w$ is an upper bound on the norm of each noise function and $D>0$ is an upper bound  {on $d \big ( f_0^*, f_0 \big )$.}
\end{corollary}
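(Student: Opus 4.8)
The plan is to obtain the corollary as a specialization of Theorem \ref{thm:contraction-expansion-noise} to the adversary-free regime $p_t = 1$. Under this choice each $\overline{\mathcal{T}}_t$ is a genuine contraction with constant $q_t$, so the random variable $B_t$ of \eqref{eq:B_t} degenerates to the deterministic value $q_t$, the hypothesis $\sup_t\big(q_t^2 p_t + Q_t^2(1-p_t)\big) = \sup_t q_t^2 = q^2 < 1$ is automatically satisfied, and both $P_t$ and $S_t$ become deterministic: $P_t = \prod_{i=0}^{t-1} q_i \le q^t$ and $S_t = 1 + \sum_{i=1}^{t-1}\prod_{j=1}^{t-i} q_j \le 1 + \sum_{m=1}^{t-1} q^m \le \frac{1}{1-q}$, using $q_i \le q < 1$ termwise. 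Feeding these bounds into the conclusion $d(f_t, f_t^*) \le P_t\, d(f_0, f_0^*) + S_t(\epsilon_f + \epsilon_w)$ of Theorem \ref{thm:contraction-expansion-noise}, together with $d(f_0, f_0^*) \le D$, yields the deterministic estimate
\begin{equation*}
d(f_t, f_t^*) \le q^t D + \frac{1}{1-q}(\epsilon_f + \epsilon_w) \qquad \text{for all } t \ge 0.
\end{equation*}

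Next I would invert this bound. Since $\epsilon > \frac{1}{1-q}(\epsilon_f+\epsilon_w)$ by assumption, the quantity $\epsilon - \frac{1}{1-q}(\epsilon_f+\epsilon_w)$ is strictly positive, and the right-hand side above is strictly less than $\epsilon$ precisely when $q^t < \big(\epsilon - \frac{1}{1-q}(\epsilon_f+\epsilon_w)\big)/D$. Taking logarithms and dividing by $\ln q < 0$ (which reverses the inequality), this holds exactly for $t > \ln\!\big( (\epsilon - \tfrac{1}{1-q}(\epsilon_f+\epsilon_w))/D \big)\big/\ln q$. The remaining hypothesis $\epsilon \le \frac{1}{1-q}(\epsilon_f+\epsilon_w) + D$ guarantees the argument of the logarithm lies in $(0,1]$, so this real threshold is nonnegative; passing to the next integer gives that every integer $t \ge 1 + \ln\!\big((\epsilon - \tfrac{1}{1-q}(\epsilon_f+\epsilon_w))/D\big)\big/\ln q$ satisfies $d(f_t, f_t^*) < \epsilon$.

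Finally I would note that $t \mapsto q^t D + \frac{1}{1-q}(\epsilon_f+\epsilon_w)$ is nonincreasing, so once $d(f_t,f_t^*) < \epsilon$ holds at one time it holds at all later times; hence the integer identified above is a legitimate upper bound on $T(\epsilon) = \min\{T : d(f_t,f_t^*) < \epsilon,\ \forall t \ge T\}$, which is the claim. There is no real obstacle here: the only points requiring a line of care are (i) verifying that the $p_t = 1$ specialization of Theorem \ref{thm:contraction-expansion-noise} makes $P_t$ and $S_t$ deterministic with the stated bounds, so that no variance/Chebyshev argument is needed, and (ii) the bookkeeping that turns the real threshold into an integer hitting time and accounts for the additive ``$+1$.''
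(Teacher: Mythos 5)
Your proposal is correct and follows essentially the same route as the paper: specialize Theorem \ref{thm:contraction-expansion-noise} to the noiseless-adversary case $p_t=1$ so that $B_t=q_t$ deterministically, obtain the bound $d(f_t,f_t^*)\le q^t D + \frac{1}{1-q}(\epsilon_f+\epsilon_w)$, observe it is decreasing in $t$, and invert. Your additional remarks on why the range restriction on $\epsilon$ keeps the logarithm's argument in $(0,1]$ and on rounding to an integer are just the bookkeeping the paper leaves implicit.
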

\begin{proof}
When the time-varying mappings $\{\mathcal{T}_t\}$ are only contraction mappings, the random variable $B_t$ is equal to $q_t$ with probability $1$ in \eqref{eq:B_t}.
As a result, Equation \eqref{distance_at_time_t_2} has the following form:
\begin{equation}\label{distance_at_time_t_1}
d \big ( f_t, f_t^* \big )\leq q^t \cdot d \big ( f_0, f_0^* \big ) + \frac{1}{1 - q} \cdot (\epsilon_f + \epsilon_w),
\end{equation}
where we use $q = \sup_t q_t$.
{Since} the right-hand side of \eqref{distance_at_time_t_1} is 
 {decreasing in $t$, the hitting time $T(\epsilon)$ is upper-bounded by the minimum value of $t$ that satisfies $q^t \cdot d \big ( f_0, f_0^* \big ) + \frac{1}{1 - q} \cdot (\epsilon_f + \epsilon_w) \leq \epsilon$.}
The proof is completed by noticing that $d \big ( f_0^*, f_0 \big )$ is upper-bounded by a constant $D > 0$ and $\frac{1}{1 - q} \cdot (\epsilon_f + \epsilon_w) \leq \epsilon \leq \frac{1}{1 - q} \cdot (\epsilon_f + \epsilon_w) + D$.
\end{proof}
{Corollary \ref{cor:thm:time-varying_operator} formalizes how many iterations are required in the value iteration with additive noise and a time-varying contraction operator -- that can be caused by a time-varying environment -- to guarantee that the ultimate function value is in an $\epsilon$-neighborhood of the fixed point.}
}
 {
\begin{remark}
Tighter bounds on the hitting time for Theorems \ref{thm:probabilistic_banach} and \ref{thm:contraction-expansion-noise} may be obtained by applying concentration inequalities involving higher moments instead of Chebyshev's inequality.
However, since our bounds already have logarithmic dependence on the relevant parameters $p$, $Q$, $L$, $\epsilon$, and $d(f_1,f_0)$, they are sufficient for most practical purposes as long as those parameters do not scale exponentially with the problem size.
\end{remark}
}

\subsection{Optimization of Time-Varying Functions with Additive Noise}
\label{section_optimization_TV_continuous_noise}
Consider the unknown time-varying continuous function $f_t: \mathcal{D} \rightarrow \mathcal{R}$ with {the known} bounded Lipschitz constant $K_t$,  {over the discrete-time horizon $t \in \{1, 2, \dots\}$}, where $\mathcal{D} \subset \mathbb{R}^d$ is a compact set and $\mathcal{R} \subset \mathbb{R}$.
The goal is to $\epsilon$-optimize the unknown time-varying function $f_t$, i.e., to find {a} possibly time-varying point $\widehat{x}_t^*$ such that  {$| f_t(\widehat{x}_t^*) - f_t(x_t^*) | \leq \epsilon$} for $\epsilon > 0$, where $x_t^* = \argmin_{x \in \mathcal{D}} f_t(x)$.
 {Although the function $f_t$ is unknown, inquiries of the function values at given input points can be made in consecutive rounds, which are evaluated with added noise.}
 {More precisely, at round $t \in \{1, 2, \dots\}$, we consider querying the function $f_t$ on the set of input points $\mathcal{P} = \{x_1, \dots, x_n \} \subset \mathcal{D}$,
and the revealed values are}
\begin{equation}
\label{eq_noise}
    \widetilde{f}_t(x_i) = f_t(x_i) + N_t(x_i),
\end{equation}
where $N_t(x_i)$  {is some noise satisfying the following assumption.
\begin{assumption}\label{assump:noise_N}
The noise parameters $N_t(x_i)$ are bounded i.i.d. random variables with zero mean, i.e., $\mathbb{E}[N_t(x_i)] = 0$, for which there exists $L_N >0$ such that $\left[\sup\{N_t(x_i)\}-\inf\{N_t(x_i)\}\right]<L_N$ for all $t\in\{1, 2, \dots\}$ and $x_i \in \mathcal{P}$.
\end{assumption}
}
If the noise is disruptive enough, a single set of observed noisy function values $f_t(x_i)$ for all $x_i \in \mathcal{P}$ may not represent the unknown target function accurately, making it impossible to $\epsilon$-optimize the function {with a few number of observations}.
Furthermore, {since} the function {changes} over time, old observations may not be useful in $\epsilon$-optimizing the time-varying function  {as $t$ increases}.
Putting these two facts into perspective, the estimate of the target function $f_t$ at round $t - 1$, {namely} $\widehat{f}_{t - 1}$, {may need to be} updated with the new observation at round $t$,
while discarding inaccurate old observations. {We propose the following formula for estimating $f_t$:}
\begin{equation}
\label{eq_update}
\begin{aligned}
    \widehat{f}_t(x_i) = & \frac{\min\{t, T + 1\} - 1}{\min\{t, T\}} \cdot \widehat{f}_{t - 1}(x_i) + \frac{1}{\min\{t, T\}} \cdot \widetilde{f}_t(x_i) - \frac{1}{T} \cdot \widetilde{f}_{t - T}(x_i) \cdot \mathbbm{1}\{t > T\},
\end{aligned}
\end{equation}
where $\mathbbm{1}\{\cdot\}$ is the indicator function.
The parameter $T$, whose value to be specified, {should be chosen such that old data is discarded} due to the time-varying nature of the function while not harming accurate estimation of the function value in the presence of noise.
The computational cost of \eqref{eq_update} is {on} the same order of that of {the} moving average update in reinforcement learning, but in  \eqref{eq_update} there is a need {for} storing the previous $T$ observations in order to have access to $\widetilde{f}_{t - T}(x_i)$.

The {estimation} function $\widehat{f}_t(x_i)$ changes over time and may not represent the target function {for small values of $t$}.
However, there {may exist a hitting time $T$ that is used in \eqref{eq_update} after which} optimizing the estimated function $\widehat{f}_t$ $\epsilon$-optimizes the target function $f_t$ with an associated confidence level $1 - a$, where $0 < a \leq 1$.
As a result, the complexity of $\epsilon$-optimizing the unknown time-varying target function $f_t$ {in long-run} is irrelevant to the complexity of optimizing function $\widehat{f}_t$ up to the hitting time $T$.
Consequently, the hitting time $T$ as well as the optimization complexity of $\widehat{f}_t$ for $t \geq T$ captures the difficulty of $\epsilon$-optimizing the target function $f_t$ rather than the cumulative optimization complexities of functions $\widehat{f}_t$ for $t < T$.
Formally speaking, the hitting time $T(\epsilon, a)$ is defined below.

\begin{definition}
\label{definition_hitting_time_continuous_noise}
{Given $\epsilon > 0$ and $a \in (0, 1]$,} the hitting time $T(\epsilon, a)$ is defined as
\begin{equation}
\label{hitting_time}
\begin{aligned}
    T(\epsilon, a)    =    \min    \Big \{    T  :   \mathbb{P} \big (   {\big | f_t(\widehat{x}_t^*)    -    f_t(x_t^*) \big |    \leq    \epsilon }\big )    \geq    1    -    a,    {\ \forall t    \geq    T    } \Big \}  ,
\end{aligned}
\end{equation}
where $\widehat{x}_t^* = \argmin_{x \in \mathcal{P}} \widehat{f}_t(x)$ and $x_t^* = \argmin_{x \in \mathcal{D}} f_t(x)$.
\end{definition}


 {To make the time-varying problem amenable to optimization, we also make the following assumption about the set of input points $\mathcal{P}$.
\begin{assumption}\label{assump:granularity}
For a given $\epsilon >0$, the set of input points $\mathcal{P}=\{x_1, x_2, \dots, x_n\}$ is a $\delta$-uniform grid of the function domain $\mathcal{D}$ such that $\delta < \frac{2\epsilon}{7\sqrt{d}K}$, where $K = \sup_{t \geq 1} K_t$ with $K_t$ being the Lipschitz constant of function $f_t$.
\end{assumption}
Recall that being a $\delta$-uniform grid means that $\mathcal{P}$ satisfies two properties:
(i) ${\{x_i + \delta e_j, x_i - \delta e_j\}} \cap \mathcal{D} \in \mathcal{P}$ for all $i \in {\{1, \dots, n\}}$ and
$j \in {\{1, \dots, d\}}$, where $e_1, \dots, e_d$ are the standard basis of $\mathbb{R}^d$, and {(ii)} for {every} $x \in \mathcal{D}$ there exists $x_i \in \mathcal{P}$ such that $\|x_i - x\| \leq \sqrt{d} \delta/2$.
}
 {
The fine granularity assumption, i.e., $\delta < \frac{2\epsilon}{7\sqrt{d}K}$, assures that there exists a grid point whose unknown function value at time $t$ is at least $\frac{\epsilon}{7}$ close to the minimum of function $f_t$.}
Denote such points of the grid $\mathcal{P}$ by $\mathcal{N}_t(\frac{\epsilon}{7}) = \{x_i \in \mathcal{P}: f_t(x_i) - f_t(x_t^*) \leq \frac{\epsilon}{7} \}$ and let $\overline{\mathcal{N}}_t(\epsilon) = \{x_i \in \mathcal{P}: f_t(x_i) - f_t(x_t^*) > \epsilon \}$.
 {Without loss of generality, we assume that $\overline{\mathcal{N}}_t(\epsilon) \neq \emptyset$;} otherwise, any point in $\mathcal{P}$ $\epsilon$-optimizes function $f_t$.
The following theorem presents an upper bound on {the} hitting time.

\begin{theorem}
\label{theorem_hitting_time}
Consider the unknown time-varying function $f_t$ with the property  {
    ${| f_t(x) - f_{t - 1}(x) |} \leq \frac{\epsilon^3}{43 L_N^2 \cdot \ln(\frac{n}{a})}$, for all $t \geq 1$ and $ x \in \mathcal{D}$.
Given $\epsilon > 0$ and $a \in (0, 1]$, let Assumptions \ref{assump:noise_N} and \ref{assump:granularity} hold. Then, the hitting time $T(\epsilon, a)$ satisfies the inequality}
\begin{equation}\label{eq:thm_hitting_time}
    T(\epsilon, a) \leq \frac{49L_N^2}{8 \epsilon^2} \cdot \ln \left ( \frac{n}{a} \right ) + 1.
\end{equation}

\end{theorem}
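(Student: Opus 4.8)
The plan is to reduce the moving-window recursion \eqref{eq_update} to a plain average, split the estimation error at each grid point into a deterministic drift (from the time variation of $f_t$) and a zero-mean noise term, control the two pieces separately, and then use the granularity Assumption \ref{assump:granularity} to turn a uniform control of $\widehat f_t-f_t$ over $\mathcal P$ into the optimality bound in Definition \ref{definition_hitting_time_continuous_noise}. First I would unroll \eqref{eq_update} by induction on $t$, distinguishing the cases $t\le T$, $t=T+1$, and $t>T+1$, to get $\widehat f_t(x_i)=\frac{1}{m_t}\sum_{s=t-m_t+1}^{t}\widetilde f_s(x_i)$ with $m_t=\min\{t,T\}$; in particular $m_t=T$ for every $t\ge T$, so only such $t$ need be analyzed. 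Substituting \eqref{eq_noise} and adding and subtracting $f_t(x_i)$ gives $\widehat f_t(x_i)-f_t(x_i)=D_t(x_i)+E_t(x_i)$ with $D_t(x_i)=\frac1T\sum_{s=t-T+1}^{t}\big(f_s(x_i)-f_t(x_i)\big)$ and $E_t(x_i)=\frac1T\sum_{s=t-T+1}^{t}N_s(x_i)$.

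For the drift, the hypothesis $|f_t(x)-f_{t-1}(x)|\le \epsilon^3/(43L_N^2\ln(n/a))$ and the triangle inequality give $|f_s(x_i)-f_t(x_i)|\le(t-s)\epsilon^3/(43L_N^2\ln(n/a))$, hence $|D_t(x_i)|\le\frac{T-1}{2}\cdot\frac{\epsilon^3}{43L_N^2\ln(n/a)}$. I would then fix the window to $T=\big\lceil 49L_N^2\ln(n/a)/(8\epsilon^2)\big\rceil$, so that $T\le 49L_N^2\ln(n/a)/(8\epsilon^2)+1$ (which is exactly \eqref{eq:thm_hitting_time}) and $T-1\le 49L_N^2\ln(n/a)/(8\epsilon^2)$; this forces $|D_t(x_i)|<\epsilon/14$ and $(T-1)\epsilon^3/(43L_N^2\ln(n/a))<\epsilon/7$. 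For the noise, Assumption \ref{assump:noise_N} makes $E_t(x_i)$ an average of $T$ i.i.d.\ zero-mean variables of range below $L_N$, so the one-sided Hoeffding inequality yields $\mathbb{P}\big(E_t(x_i)>u\big)\le\exp(-2Tu^2/L_N^2)$ and $\mathbb{P}\big(E_t(x_i)<-u\big)\le\exp(-2Tu^2/L_N^2)$ for any $u>0$. Taking $u=2\epsilon/7$ makes $2Tu^2/L_N^2=8T\epsilon^2/(49L_N^2)\ge\ln(n/a)$ by the choice of $T$.

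Now fix $t\ge T$. By the covering property in Assumption \ref{assump:granularity} there is a deterministic grid point $x_j$ with $\|x_j-x_t^*\|\le\sqrt d\,\delta/2<\epsilon/(7K)$, so $0\le f_t(x_j)-f_t(x_t^*)\le K\|x_j-x_t^*\|<\epsilon/7$; thus $x_j\in\mathcal N_t(\epsilon/7)$ and, in particular, $x_j\notin\overline{\mathcal N}_t(\epsilon)$. Let $A_t$ be the event that $E_t(x_j)\le u$ and $E_t(x_i)\ge -u$ for every $x_i\in\overline{\mathcal N}_t(\epsilon)$. Since $x_j$ lies outside $\overline{\mathcal N}_t(\epsilon)$, $A_t$ is an intersection of at most $n$ one-sided tail events, so the union bound gives $\mathbb{P}(A_t^c)\le n\exp(-2Tu^2/L_N^2)\le a$. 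On $A_t$ one has $\widehat f_t(x_j)\le f_t(x_t^*)+\epsilon/7+|D_t(x_j)|+u$ and $\widehat f_t(x_i)>f_t(x_t^*)+\epsilon-|D_t(x_i)|-u$ for each $x_i\in\overline{\mathcal N}_t(\epsilon)$; since $\epsilon/7+\big(|D_t(x_j)|+|D_t(x_i)|\big)+2u<\epsilon/7+\epsilon/7+4\epsilon/7=6\epsilon/7<\epsilon$, we get $\widehat f_t(x_j)<\widehat f_t(x_i)$ for all such $x_i$. Hence $\widehat x_t^*=\argmin_{x\in\mathcal P}\widehat f_t(x)\notin\overline{\mathcal N}_t(\epsilon)$, i.e.\ $f_t(\widehat x_t^*)-f_t(x_t^*)\le\epsilon$ (the case $\overline{\mathcal N}_t(\epsilon)=\emptyset$ being immediate). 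Therefore $\mathbb{P}\big(|f_t(\widehat x_t^*)-f_t(x_t^*)|\le\epsilon\big)\ge\mathbb{P}(A_t)\ge 1-a$ for every $t\ge T$, and since $T\le 49L_N^2\ln(n/a)/(8\epsilon^2)+1$, Definition \ref{definition_hitting_time_continuous_noise} yields \eqref{eq:thm_hitting_time}.

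The step I expect to be most delicate is pinning down the constant $49/8$: it relies on the one-sided Hoeffding bookkeeping, namely that the near-optimal grid point $x_j$ is deterministic and lies outside $\overline{\mathcal N}_t(\epsilon)$, so that the failure event is a union of at most $n$ (rather than $2n$) one-sided tails, which is what produces $\ln(n/a)$ instead of $\ln(2n/a)$ in \eqref{eq:thm_hitting_time}. Everything else is a careful split of the budget $\epsilon$ into the granularity term $\epsilon/7$, the combined drift (less than $\epsilon/7$), and the combined noise $2u=4\epsilon/7$, plus the routine induction establishing the closed form of $\widehat f_t$.
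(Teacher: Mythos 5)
Your proof is correct and follows essentially the same route as the paper's: unroll the moving-average update \eqref{eq_update} to split $\widehat f_t - f_t$ into a deterministic drift plus an averaged noise term, bound the drift via the time-variation hypothesis, control the noise with one-sided Hoeffding and a union bound over at most $n$ tail events (exploiting that the near-optimal grid point $x_j$ is deterministic and lies outside $\overline{\mathcal N}_t(\epsilon)$), and transfer the resulting ordering of $\widehat f_t$ to $\epsilon$-optimality of $f_t$ via the $\delta$-grid Assumption~\ref{assump:granularity}. Your drift bookkeeping via the triangular weights $\sum_{s}(t-s)/T = (T-1)/2$ is a factor-of-two tighter than the paper's crude per-term bound and makes the choice $T=\lceil 49L_N^2\ln(n/a)/(8\epsilon^2)\rceil$ close rigorously with visible slack, but the decomposition, the Hoeffding/union-bound step, and the final arithmetic are the same.
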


\begin{proof}
In order to find an upper bound on the hitting time $T(\epsilon, a)$,
 {it is reasonable to assume that the function {variation over time} is {upper-bounded}; otherwise, there may not be enough time for learning the rapidly changing functions $\{f_t\}$.}
{Assume that the time-variation} of the unknown time-varying target function $f_t$ is  {upper-bounded} by
\begin{equation}
\label{time_varying_constraint}
     {| f_t(x) - f_{t - 1}(x) |} \leq \frac{\epsilon}{7T}, \quad \forall t \geq 1, \forall x \in \mathcal{D}.
\end{equation}
Then,  {under Assumption \ref{assump:granularity}}, the hitting event  {defined} in \eqref{hitting_time} satisfies the following condition
\begin{equation}
\label{eq_subset}
    \begin{aligned}
        & \Bigg \{ \exists x_i \in \mathcal{N}_t(\frac{\epsilon}{7}) \text{ such that } \frac{1}{T} \cdot \hspace{-0.3cm}\sum_{s = t - T + 1}^t N_s(x_i) \leq \frac{2\epsilon}{7} \textbf{ and }
        \frac{1}{T} \cdot \hspace{-0.3cm} \sum_{s = t - T + 1}^t N_s(x_i) \geq -\frac{2\epsilon}{7}, \forall x_i \in \overline{\mathcal{N}}_t(\epsilon) \Bigg \} \\
        \subseteq & \left \{  {\big| f_t(\widehat{x}_t^*) - f_t(x_t^*) \big |} \leq \epsilon \right \}, \quad \forall t \geq T.
    \end{aligned}
\end{equation}
The above equation {holds} true because \eqref{eq_noise} and \eqref{eq_update} result in $\widehat{f}_t(x_i) = \frac{1}{T} \cdot \sum_{s = t - T + 1}^t f_s(x_i) + \frac{1}{T} \cdot \sum_{s = t - T + 1}^t N_s(x_i)$ for $t \geq T$, and by \eqref{time_varying_constraint}, {one can write}
\begin{equation}
\label{value_difference}
\begin{aligned}
    & \widehat{f}_t(x_i) \leq f_t(x_i) + \frac{\epsilon}{7} + \frac{1}{T} \cdot \sum_{s = t - T + 1}^t N_s(x_i), \quad \forall x_i \in \mathcal{N}_t(\frac{\epsilon}{7}), \\
    & \widehat{f}_t(\overline{x}_j) \geq f_t(\overline{x}_j) - \frac{\epsilon}{7} + \frac{1}{T} \cdot \sum_{s = t - T + 1}^t N_s(\overline{x}_j), \quad \forall \overline{x}_j \in \overline{\mathcal{N}}_t(\epsilon).
\end{aligned}
\end{equation}
Furthermore, $f_t(\overline{x}_j) - f_t(x_i) > \frac{6\epsilon}{7}$ for all $\overline{x}_j \in \overline{\mathcal{N}}_t(\epsilon)$ and
$x_i \in \mathcal{N}_t(\frac{\epsilon}{7})$.
Taking the difference of the two inequalities in \eqref{value_difference} yields that $\widehat{f}_t(\overline{x}_j) - \widehat{f}_t(x_i) > \frac{4\epsilon}{7} + \sum_{s = t - T + 1}^t N_s(\overline{x}_j) - \sum_{s = t - T + 1}^t N_s(x_i)$.
If the event on the left-hand side of \eqref{eq_subset} is true, {then} $\widehat{f}_t(\overline{x}_j) - \widehat{f}_t(x_i) > 0$, which means {that} there exists $\widetilde{x}_t^* \in \mathcal{N}_t(\frac{\epsilon}{7})$ whose estimated function value is less than the estimated function value {at all points} $\overline{x}_j \in \overline{\mathcal{N}}_t(\epsilon)$.
Note that the estimated function value {at} a point $\overline{x}_t^* \in \mathcal{P} \setminus \left ( \mathcal{N}_t(\frac{\epsilon}{7}) \cup \overline{\mathcal{N}}_t(\epsilon) \right )$ can be less than $\widehat{f}_t(\widetilde{x}_t^*)$, but such a point also $\epsilon$-optimizes the function $f_t$.
Hence, $\widehat{x}_t^* = \argmin_{x \in \mathcal{P}} \widehat{f}_t(x)$ $\epsilon$-optimizes the function $f_t$, which means {that} the event on right-hand side of \eqref{eq_subset} is true.

 {Denote the event on the left-hand side of \eqref{eq_subset} as $E_t$, whose probability can be lower-bounded as
}
\begin{equation}
\label{eq_left_hand_side}
     {
    \begin{aligned}
     \mathbb{P}\{E_t\}
        &{\overset{(a)}{\geq}}   
        \mathbb{P} \left \{ \frac{1}{T} \cdot \sum_{s = t - T + 1}^t N_s(x_i) \leq \frac{2\epsilon}{7}, x_i \in \mathcal{N}_t(\frac{\epsilon}{7}) \right \}  \times \prod_{x_i \in \overline{\mathcal{N}}_t(\epsilon)} \mathbb{P} \left \{ \frac{1}{T} \cdot \sum_{s = t - T + 1}^t N_s(x_i) \geq -\frac{2\epsilon}{7} \right \} \\
        &{\overset{(b)}{\geq}}  \ \prod_{x_i \in \mathcal{P}} \left ( 1 -  \exp \left ( - \frac{8 T \epsilon^2}{49L_N^2} \right ) \right ) \\
        &{>}  \ 1 - n \cdot \exp \left ( - \frac{8 T \epsilon^2}{49L_N^2} \right ),
    \end{aligned}
    }
\end{equation}
where $(a)$ is true as the added noise signals are independent of each other and $(b)$ {follows} from Hoeffding's inequality and possibly multiplying by positive terms that are less than one.
Putting \eqref{eq_subset} and \eqref{eq_left_hand_side} together, we have
\begin{equation}
\label{eq_lower_bound_prob}
    \begin{aligned}
        & \mathbb{P}    \left \{  {\big | f_t(\widehat{x}_t^*)    -    f_t(x_t^*) \big |}    \leq    \epsilon \right \}    \geq    1 - n \cdot \exp    \left (    - \frac{8 T \epsilon^2}{49L_N^2} \right )  , \forall t \geq T.
    \end{aligned}
\end{equation}
If $1 - n \cdot \exp \left ( - \frac{8 T \epsilon^2}{49L_N^2} \right ) \geq 1 - a$ or equivalently $T \geq \frac{49L_N^2}{8\epsilon^2} \cdot \ln \left ( \frac{n}{a} \right ) $, we have
\begin{equation}
\label{eq_lower_bound_prob_2}
    \begin{aligned}
        \mathbb{P} \left \{  {\big| f_t(\widehat{x}_t^*) - f_t(x_t^*) \big |} \leq \epsilon \right \} \geq 1 - a, \quad \forall t \geq T.
    \end{aligned}
\end{equation}
As a result, an upper bound on {the} hitting time $T(\epsilon, a)$ defined in \eqref{hitting_time} is provided as
\begin{equation}
    T(\epsilon, a) \leq \frac{49L_N^2}{8 \epsilon^2} \cdot \ln \left ( \frac{n}{a} \right ) + 1.
\end{equation}
 {We substitute the upper bound on $T(\epsilon,a)$ into \eqref{time_varying_constraint}. It follows that the above analysis is valid if}
\begin{equation}
     {| f_t(x) - f_{t - 1}(x) |} \leq \frac{8 \epsilon^3}{343 L_N^2 \cdot \ln(\frac{n}{a})}, \quad \forall t \geq 1, \forall x \in \mathcal{D}.
\end{equation}
 {This completes the proof.}
\end{proof}

\begin{remark}
Note that the cardinality of the $\delta$-grid with $\delta < \frac{2\epsilon}{7\sqrt{d}K}$ used in Theorem \ref{theorem_hitting_time}, {namely} $n = |\mathcal{P}|$, depends on $\epsilon$.
As an example, if $\mathcal{D}$ can be written as the Cartesian product of $d$ intervals {of} length at most $M$ as $\mathcal{D} = \mathcal{D}_1 \times \mathcal{D}_2 \times \dots \times \mathcal{D}_d$, {then} the cardinality of the $\delta$-grid would be $n = \mathcal{O} \left ( \left (\frac{\sqrt{d}KM}{\epsilon}\right )^d \right )$, {and therefore} the upper bound {on} the hitting time in Theorem \ref{theorem_hitting_time} is given by $T(\epsilon, a) \leq \mathcal{O} \left ( \frac{dL_N^2}{\epsilon^2} \cdot \ln \left ( \frac{\sqrt{d}KM}{\sqrt[d]{a} \epsilon} \right ) \right )$.
\end{remark}

{Theorem \ref{theorem_hitting_time} determines how fast the unknown function $f_t$ is allowed to change over time such that one can still learn the estimation function $\widehat{f}_t$ which is used to $\epsilon$-optimize the target function $f_t$ with a confidence level.
The parameter $T$ in \eqref{eq_update} can be set to the upper bound provided in Theorem \ref{theorem_hitting_time} so that old inaccurate observations are discarded and at the same time enough observations are used for an accurate estimation of $f_t$.
}

\subsection{Improved Bounds for Convex Functions}
\label{sec:improved_bounds_cts}
Consider the same framework as in Section \ref{section_optimization_TV_continuous_noise} {under additional assumptions to be stated here}.
Let $f_t$ be a convex function for all $t \geq 1$.
Denote the lower contour set of the convex function $f_t$ by $C_t(c) = \{x \in \mathcal{D}: f_t(x) - f_t(x_t^*) \leq c\}$ and the level set of the convex function $f_t$ by $L_t(c) = \{x \in \mathcal{D}: f_t(x) - f_t(x_t^*) = c\}$ for $c > 0$.
Define $\overline{C}_t(c_1, c_2) = \{x \in \mathcal{D}: c_1 < f_t(x) - f_t(x_t^*) \leq c_2\}$ when $c_2 > c_1$.
{Let}
$\mathcal{M}_t(c) = \{ x_i \in \mathcal{P}: x_i \in C_t(c) \}$ and $\overline{\mathcal{M}}_t(c_1, c_2) = \{ x_i \in \mathcal{P}: x_i \in \overline{C}_t(c_1, c_2) \}$.
 {
\begin{assumption}\label{assump:homeomorphic}
There exists $M>0$ such that $L_t(M)$ is homeomorphic to a $d$-dimensional sphere and is inside $\mathcal{D}$ for all $t \geq 1$.
\end{assumption}
}
If $d = 1$ or {$d = 2$}, a {sphere} is defined as two distinctive points {or a circle, respectively}.
{Note that a lower bound on $M$ can be estimated up to a precision with high probability, but $M$ is assumed to be known to simplify the proof concepts.}
 {
\begin{assumption}\label{assump:gradient_lower_bound}
There exists $k>0$ such that 
    $\left \| \nabla f_t(x) \right \| \geq k$, for all $t\geq 1$ and $x \in \mathcal{D} \setminus C_t(\epsilon)$.
\end{assumption}
Intuitively, Assumption \ref{assump:gradient_lower_bound} requires every convex function $f_t$ have enough curvature inside its lower contour set $C_t(\epsilon)$, so that  $\| \nabla f_t(x) \|$ can be uniformly lower-bounded by a positive constant $k$ in $\mathcal{D} \setminus C_t(\epsilon)$ for all $t\geq 1$.}

Leveraging the new assumptions on {the} time-varying functions  {$\{f_t\}$}, the following theorem presents a tighter upper bound on {the} hitting time compared to Theorem \ref{theorem_hitting_time}.

\begin{theorem}
\label{theorem_hitting_time_convex}
Consider the unknown time-varying convex function $f_t$ with {the} property
$ {| f_t(x) - f_{t - 1}(x) |} \leq \frac{ \epsilon^3}{43 L_N^2 \cdot \ln(\frac{n}{a})}$,
for all $t \geq 1$ and
$x \in \mathcal{D}$.
 {Given $\epsilon > 0$ and $a \in (0, 1]$, suppose that Assumptions \ref{assump:noise_N}-\ref{assump:gradient_lower_bound} hold.
Then, the hitting time $T(\epsilon, a)$ is upper-bounded by the minimum $T$ satisfying the inequality}
\begin{equation}
\label{eq:thm:convex}
    \sum_{l = 0}^{l_m} n_l \cdot \exp \Big ( - \frac{2 T \big ( l + \frac{2}{7} \big )^2 \epsilon^2}{L_N^2} \Big ) \leq  a,
\end{equation}
where { $\sum_{l = 0}^{l_m} n_l = n$ and $l_m \leq \lfloor \frac{M}{\epsilon} \rfloor - 3$ such that $n_l = \frac{m_l}{1 + m_l} \cdot n + 1$  for $l \in \{0, 1, \dots, l_m - 1 \}$ with $m_l = \frac{2^{d+ 1} \cdot K \cdot \epsilon}{k \cdot \big (M - (l + 4) \epsilon \big )}$}.
\end{theorem}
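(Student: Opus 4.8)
The plan is to refine the union-bound argument behind Theorem~\ref{theorem_hitting_time}, using convexity to give each grid point a failure budget that grows with its function value instead of the uniform budget $\tfrac{2\epsilon}{7}$ used in the non-convex case. As in the proof of Theorem~\ref{theorem_hitting_time}, I would first observe that under the per-round variation hypothesis and \eqref{eq_update}, for $t \ge T$ one has $\widehat f_t(x_i) = \frac1T\sum_{s=t-T+1}^t f_s(x_i) + \bar N_t(x_i)$ with $\bar N_t(x_i) := \frac1T\sum_{s=t-T+1}^t N_s(x_i)$, so $\widehat f_t(x_i)$ agrees with $f_t(x_i)$ up to a time-variation bias of magnitude at most $\tfrac{\epsilon}{7}$ plus $\bar N_t(x_i)$; fine granularity (Assumption~\ref{assump:granularity}) gives $\mathcal{M}_t(\tfrac{\epsilon}{7})\ne\emptyset$. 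The hitting event of Definition~\ref{definition_hitting_time_continuous_noise} is then implied by the event that some $x_i\in\mathcal{M}_t(\tfrac{\epsilon}{7})$ has $\bar N_t(x_i)\le\tfrac{2\epsilon}{7}$ and, for every $l$, every grid point with $f_t(x_i)-f_t(x_t^*)$ in the $l$-th band above $\epsilon$ has $\bar N_t(x_i)\ge-(l+\tfrac27)\epsilon$: in that event $\widehat f_t$ at the good point is at most $f_t(x_t^*)+\tfrac{4\epsilon}{7}$, which is strictly below $\widehat f_t$ at every point of value exceeding $\epsilon$, so $\argmin_{x\in\mathcal{P}}\widehat f_t(x)$ $\epsilon$-optimizes $f_t$.

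Next I would stratify $\mathcal{P}$ into shells $S_0,\dots,S_{l_m}$, with $S_l$ collecting the grid points whose value lies in the $l$-th band above $\epsilon$ ($S_0$ also absorbing $\mathcal{M}_t(\tfrac{\epsilon}{7})$ and the neutral points, and $S_{l_m}$ absorbing everything beyond the last band), so $\sum_l|S_l| = n$. By Hoeffding's inequality applied to the $T$ i.i.d.\ zero-mean noise terms of range at most $L_N$ (Assumption~\ref{assump:noise_N}), $\mathbb{P}\{\bar N_t(x_i)<-(l+\tfrac27)\epsilon\}\le\exp(-2T(l+\tfrac27)^2\epsilon^2/L_N^2)$, and the ``good point'' event fails with probability at most $\exp(-2T(\tfrac27)^2\epsilon^2/L_N^2)$; a union bound over the at most $n_l$ points of each shell gives $\mathbb{P}\{\text{miss at }t\}\le\sum_{l=0}^{l_m} n_l\exp(-2T(l+\tfrac27)^2\epsilon^2/L_N^2)$, so $T(\epsilon,a)$ is bounded by the minimal $T$ making this $\le a$, which is \eqref{eq:thm:convex}. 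Consistency of the variation hypothesis with this $T$ follows exactly as in Theorem~\ref{theorem_hitting_time}, and is automatic here since the convex bound on $T$ is no larger than the non-convex one.

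The crux is the geometric count $|S_l|\le n_l$. By convexity of $f_t$ the lower contour sets $C_t(c)$ are nested convex bodies, and Assumption~\ref{assump:homeomorphic} guarantees that those contained in $L_t(M)$ are topological $d$-balls inside $\mathcal{D}$, so no pathological shapes occur and $\delta$-grid counts are comparable to volumes up to a boundary error of order (surface area)$/\delta^{d-1}$. I would then compare the thin shell $\overline{C}_t((l+3)\epsilon,(l+4)\epsilon)$, which by the lower gradient bound $k$ of Assumption~\ref{assump:gradient_lower_bound} has radial thickness at most $\epsilon/k$, with the outer region $\overline{C}_t((l+4)\epsilon,M)$, which by the $K$-Lipschitz bound (so $\|\nabla f_t\|\le K$) has radial thickness at least $(M-(l+4)\epsilon)/K$; both are annuli around the convex body $C_t((l+3)\epsilon)$. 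A volume comparison, with a dimension-dependent factor $2^{d+1}$ absorbing the fact that $d$-dimensional annular volumes track thickness ratios only up to such a constant, gives $\mathrm{vol}(\overline{C}_t((l+3)\epsilon,(l+4)\epsilon))\le\frac{m_l}{1+m_l}\,\mathrm{vol}(C_t(M))$ with $m_l=\frac{2^{d+1}K\epsilon}{k(M-(l+4)\epsilon)}$, and translating to grid counts yields $|S_l|\le\frac{m_l}{1+m_l}n+1=n_l$, the $+1$ accounting for boundary rounding. The positivity requirement $M-(l+4)\epsilon>0$ is exactly what caps $l$ at $l_m\le\lfloor M/\epsilon\rfloor-3$, and the remaining far-away grid points (value above the last band, hence with an even larger noise budget) are swept into $S_{l_m}$. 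Carrying out this volume comparison rigorously, pinning down the constants so as to land exactly on the stated $m_l$ and $l_m$ and controlling the discretization error, is the main obstacle; the probabilistic half is a routine strengthening of Theorem~\ref{theorem_hitting_time}.
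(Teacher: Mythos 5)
Your probabilistic half is essentially the paper's: the same band stratification, the same Hoeffding bound with budget $(l+\tfrac27)\epsilon$ per band, the same union bound, and the same treatment of the time-variation hypothesis by back-substituting the Theorem~\ref{theorem_hitting_time} bound on $T$. The gap is in the geometric count $|S_l|\le n_l$, and it is a real one: the statement that ``$d$-dimensional annular volumes track thickness ratios only up to a constant $2^{d+1}$'' is false in general. Two annuli of a convex body at different scales can have a volume ratio that depends on the body's geometry and on $d$ in an unbounded way, not merely on the thickness ratio, so this step cannot be ``absorbed'' into a universal dimension constant.

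The paper's resolution, which you need, is to compare both regions through a \emph{common} $(d-1)$-dimensional cross-section rather than directly through volumes. Concretely, the paper bounds $\mathrm{Vol}\big(\overline{C}_t((l+1)\epsilon,(l+3)\epsilon)\big)$ from above by (thickness $\tfrac{2\epsilon}{k}$) $\times$ (area of a $(d-1)$-dimensional slice $P_t$ of that shell), bounds $\mathrm{Vol}\big(\overline{C}_t((l+3)\epsilon,M-\epsilon)\big)$ from below by (thickness $\tfrac{M-(l+4)\epsilon}{K}$) $\times$ (area of a slice of that region), and then observes that convexity orders the two slice areas through the separating level set: $A\big(P_t((l+1)\epsilon,(l+3)\epsilon)\big)\le A\big(L_t((l+3)\epsilon)\big)\le A\big(P_t((l+3)\epsilon,M-\epsilon)\big)$. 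Dividing the two volume bounds makes the cross-sectional areas cancel, yielding $\tfrac{n_l-1}{n-n_l}\le \tfrac{2^{d+1}K\epsilon}{k(M-(l+4)\epsilon)}=m_l$. Note also that the factor $2^{d+1}$ is not a geometric fudge constant: it is $2^d$ (each $\delta$-cube has $2^d$ vertices, giving the grid-count-to-volume comparison $n_l-1\le 2^d\,\mathrm{Vol}/\delta^d$) times $2$ (from the band width $2\epsilon$, since the paper uses the shell $\overline{C}_t((l+1)\epsilon,(l+3)\epsilon)$ rather than your thinner $\overline{C}_t((l+3)\epsilon,(l+4)\epsilon)$). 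Replacing your volume-thickness heuristic with this cross-section sandwich and accounting for $2^{d+1}$ in this way closes the gap and lands exactly on the stated $m_l$, $n_l$, and $l_m\le\lfloor M/\epsilon\rfloor-3$.
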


\begin{proof}
 {Following the same logic as in \eqref{eq_subset} and leveraging the convexity of $\{f_t\}$, we obtain that the the hitting event in \eqref{hitting_time} satisfies the  condition}
\begin{equation}
\label{eq_subset_convex}
    \begin{aligned}
        & \bigg \{ \exists x_i \in \mathcal{M}_t(\frac{\epsilon}{7}) \text{ such that } \frac{1}{T} \cdot \hspace{-0.3cm}\sum_{s = t - T + 1}^t \hspace{-0.3cm} N_s(x_i) \leq \frac{2\epsilon}{7} \textbf{ and }  \frac{1}{T} \cdot \hspace{-0.3cm}\sum_{s = t - T + 1}^t \hspace{-0.3cm} N_s(x_i) \geq -\frac{2\epsilon}{7}, \forall x_i \in \overline{\mathcal{M}}_t \Big (\epsilon, 2 \epsilon \Big ) \textbf{ and } \\ 
        & \ \  
        \frac{1}{T}    \cdot\hspace{-0.3cm} \sum_{s = t - T + 1}^t N_s(x_i) \geq - \left ( l + \frac{2}{7} \right ) \epsilon, \forall x_i \in \overline{\mathcal{M}}_t \Big ( (l + 1) \epsilon, (l + 2)\epsilon\Big ), \forall 1 \leq l \leq \Big \lfloor \frac{M}{\epsilon} \Big \rfloor \bigg \} \\
        \subseteq & \left \{  {\big| f_t(\widehat{x}_t^*) - f_t(x_t^*) \big |} \leq \epsilon \right \}, \quad \forall t \geq T.
    \end{aligned}
\end{equation}

 {Denote the event on the left-hand side of \eqref{eq_subset_convex} as $E_t$, whose probability can be lower-bounded as}
\begin{equation}
\label{eq_left_hand_side_convex}
 {{\small\begin{aligned}
         \mathbb{P}\{E_t\}
        &{\overset{(a)}{\geq}}   
        \mathbb{P} \left \{ \frac{1}{T} \cdot \sum_{s = t - T + 1}^t N_s(x_i) \leq \frac{2\epsilon}{7}, x_i \in \mathcal{M}_t(\frac{\epsilon}{7}) \right \} \times \prod_{x_i \in \overline{\mathcal{M}}_t \big (\epsilon, 2 \epsilon \big )} \mathbb{P} \left \{ \frac{1}{T} \cdot \sum_{s = t - T + 1}^t N_s(x_i) \geq -\frac{2\epsilon}{7} \right \}  \\
        &\hspace{0.5cm} \times\prod_{l = 1}^{ \lfloor \frac{M}{\epsilon} \rfloor} \prod_{x_i \in \overline{\mathcal{M}}_t \big ((l + 1) \epsilon, (l + 2) \epsilon \big )}   \mathbb{P}    \left \{    \frac{1}{T}    \cdot \hspace{-2.5mm} \sum_{s = t - T + 1}^t   N_s(x_i) \geq - \big (l + \frac{2}{7} \big ) \epsilon    \right \} \\
    &{\overset{(b)}{\geq}}     \left [    1    -    \exp    \left (    - \frac{8 T \epsilon^2}{49L_N^2} \right )    \right ]^{\overline{n}_0 + 1}   \times \prod_{l = 1}^{l_m}    \left [    1    -    \exp    \left (    - \frac{2 T \big ( l    +    \frac{2}{7} \big )^2    \epsilon^2}{L_N^2}    \right )    \right ]^{n_l} \\
        &{\geq}  \ 1 - \sum_{l = 0}^{l_m} n_l \cdot \exp \left ( - \frac{2 T \big ( l + \frac{2}{7} \big )^2 \epsilon^2}{L_N^2} \right )
    \end{aligned}
    }}
\end{equation}
where $(a)$ is true as the added noise signals are independent of each other and $(b)$ {follows from} Hoeffding's inequality, $\overline{n}_0$ is an upper bound on the number of grid points in the set $\overline{\mathcal{M}}_t \big (\epsilon, 2 \epsilon \big )$ and $n_0 = \overline{n}_0 + 1$, and $n_l$ is an upper bound on the number of grid points in the set $\overline{\mathcal{M}}_t \big ((l + 1)\epsilon, (l + 2)\epsilon \big )$,
where $l_m$ satisfies $\sum_{l = 0}^{l_m} n_l = n$ and $l_m \leq \lfloor \frac{M}{\epsilon} \rfloor - 3$.
Note that the last nonzero $n_l$ is not a free parameter {since the sum of all $n_l$ should be $n$.}
Putting \eqref{eq_subset_convex} and \eqref{eq_left_hand_side_convex} together, we have $\mathbb{P} \left \{  {\big| f_t(\widehat{x}_t^*) - f_t(x_t^*) \big |} \leq \epsilon \right \} \geq 1 - a$ for all $ t \geq T$ {provided that}
\begin{equation}
\label{eq_lower_bound_prob_convex}
    \begin{aligned}
        \sum_{l = 0}^{l_m} n_l \cdot \exp \Big ( - \frac{2 T \big ( l + \frac{2}{7} \big )^2 \epsilon^2}{L_N^2} \Big ) \leq  a,
    \end{aligned}
\end{equation}
which provides an upper bound on {the} hitting time $T(\epsilon, a)$ defined in \eqref{hitting_time}.
As stated earlier {in \eqref{time_varying_constraint}}, the above analysis is true if $ {| f_t(x) - f_{t - 1}(x) |} \leq \frac{\epsilon}{7 T(\epsilon, a)}$ for all $t \geq 1$ and $x \in \mathcal{D}$.
Using the general upper bound on {the} hitting time provided in Theorem~\ref{theorem_hitting_time}, the analysis holds if $ {| f_t(x) - f_{t - 1}(x) |} \leq \frac{ \epsilon^3}{43 L_N^2 \cdot \ln(\frac{n}{a})}$ for all $t \geq 1$ and $x \in \mathcal{D}$.

In the rest of the proof, the values of $n_l$ for $0 \leq l \leq l_m$ are computed.
The key idea{s} behind finding these upper bounds {are} that the level sets $\overline{L}_t \big ( (l + 1 ) \epsilon \big )$ for $0 \leq l \leq l_m + 2$ are nested {surfaces that are homeomorphic to a $d$-dimensional sphere} inside the function domain and {that} the minimum distance between any point of {a} level set from any of the other level set is controlled by $K$ and $k$.
Let $Vol(\cdot)$ {denote} the volume of an input $d$-dimensional set and $A(\cdot)$ {denote} the area of an input $(d - 1)$-dimensional {surface}. {By convention,} the area of a { {$d$-dimensional} sphere for $d = 1$ and $d = 2$} is equal to {2 and the length of the sphere, respectively}.
{For every $l \in \{0, 1, \dots, l_m\}$, one can write}
\begin{equation}
    \begin{aligned}
        & n_l - 1 \leq \frac{2^d \cdot Vol\left ( C_t\big ( (l + 1)\epsilon, (l + 3) \epsilon \big ) \right )}{\delta^d} 
        \leq  \frac{2^d \cdot \frac{2\epsilon}{k} \cdot A\left ( P_t \big ( (l + 1)\epsilon, (l + 3) \epsilon \big ) \right )}{\delta^d}, \\
        & \sum_{\overline{l} = l + 1}^{l_m} n_{\overline{l}} \geq \frac{Vol \left ( C_t \big ((l + 3) \epsilon, M - \epsilon \big ) \right )}{\delta^d} 
        \geq  \frac{\frac{M - (l + 4) \epsilon}{K} \cdot A\left ( P_t \big ( (l + 3) \epsilon, M - \epsilon \big ) \right )}{\delta^d},
    \end{aligned}
\end{equation}
where {the term} $2^d$ comes from the  {facts} that each $d$-dimensional cube  {has} at most $2^d$ endpoints and $P_t \big ( (l + 1)\epsilon, (l + 3) \epsilon \big ) \subset C_t\big ( (l + 1)\epsilon, (l + 3) \epsilon \big )$
and $P_t \big ((l + 3) \epsilon, M - \epsilon \big ) \subset C_t \big ((l + 3) \epsilon, M - \epsilon \big )$ are two $(d - 1)$-dimensional planes
such that $A \left ( P_t \big ( (l + 1) \epsilon, (l + 3) \epsilon \big ) \right ) \leq A\left ( L_t \big ( (l + 3) \epsilon \big ) \right ) \leq A \left ( P_t \big ((l + 3) \epsilon, M - \epsilon \big ) \right )$.
Then,
\begin{equation}
\begin{aligned}
    & \frac{n_l - 1}{n - n_l} \leq \frac{n_l - 1}{\sum_{\overline{l} = l + 1}^{l_m} n_{\overline{l}}} \leq
    \frac{2^{d+ 1} \cdot K \cdot \epsilon}{k \cdot \big (M - (l + 4) \epsilon \big )} = m_l 
    \ \Longrightarrow n_l \leq \frac{m_l}{1 + m_l} \cdot n + 1,
\end{aligned}
\end{equation}
 {which completes the proof.}
\end{proof}

\begin{remark}
 {We note that, since the left-hand side of \eqref{eq:thm:convex} is monotone decreasing in $T$, a number $T$ satisfying \eqref{eq:thm:convex} always exists.
By substituting the bound in \eqref{eq:thm_hitting_time} into \eqref{eq:thm:convex}, it can be verified that Theorem \ref{theorem_hitting_time_convex} provides a better bound than Theorem \ref{theorem_hitting_time} since some properties of convex functions are leveraged.}
A comparison of the results {of} Theorems \ref{theorem_hitting_time} and \ref{theorem_hitting_time_convex} {along with the simulation details is} depicted in Figure \ref{figure_TV_epsilon}.

\begin{figure}
\centering
\begin{subfigure}{.48\textwidth}
  \centering
  \includegraphics[width=\linewidth]{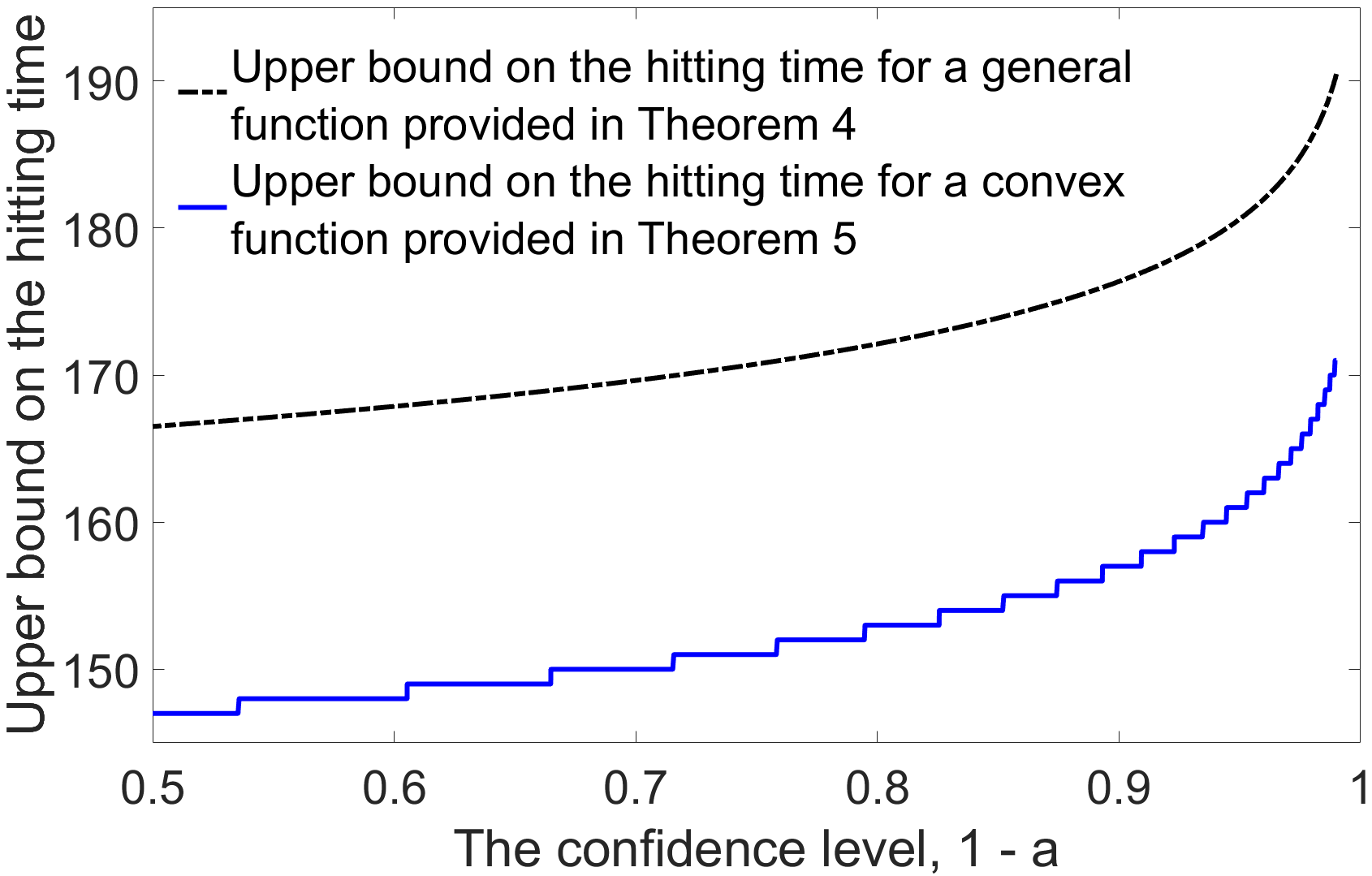}
  \caption{$\epsilon = L = 10^{-4}, n = 3.2 \times 10^{11}$, and $a$ varies.}
  \label{figure_TV_a}
\end{subfigure}\hfill
\begin{subfigure}{.48\textwidth}
  \centering
  \includegraphics[width=\linewidth]{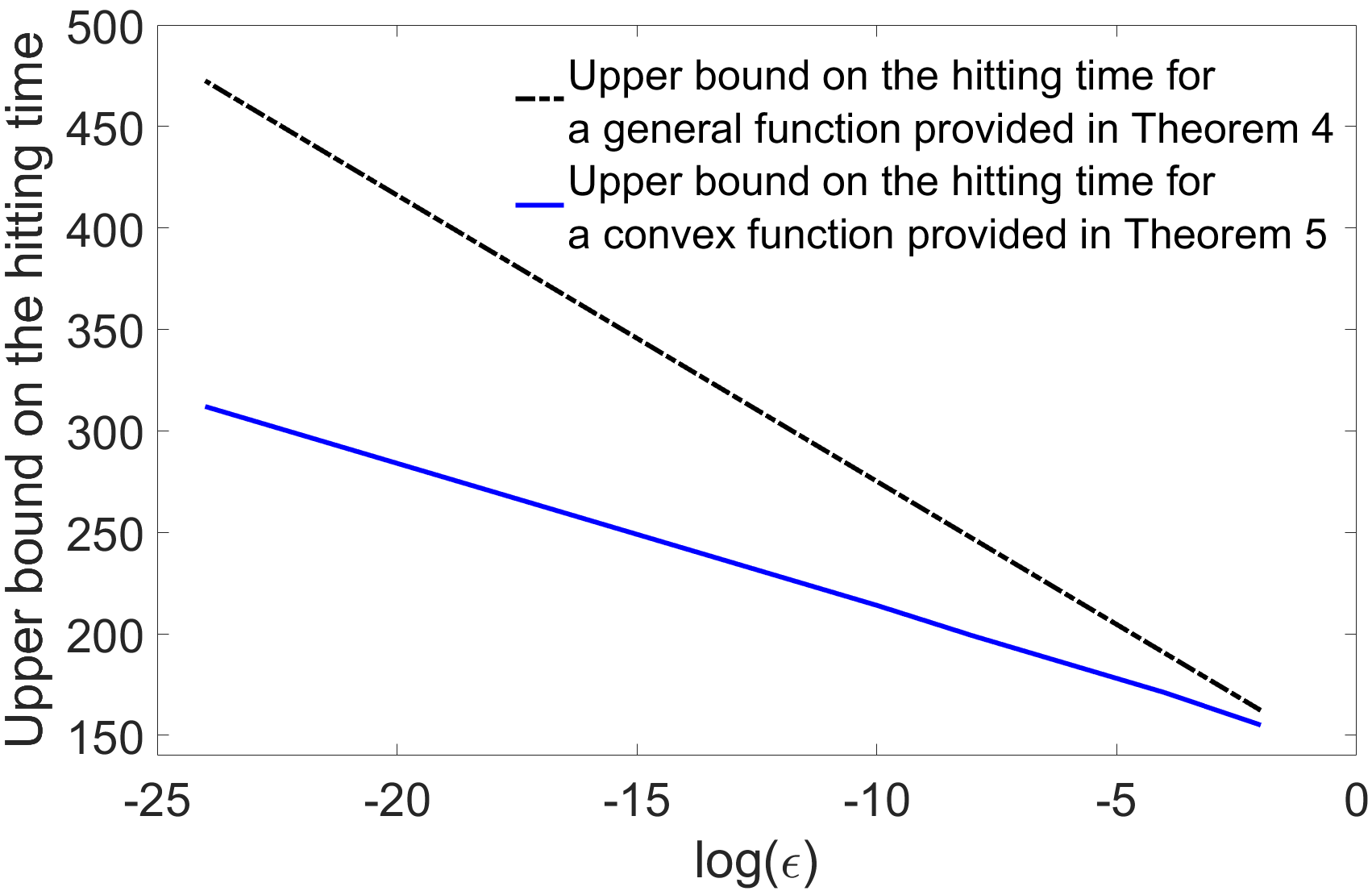}
  \caption{$a = 0.01$ and $\epsilon = L$ varies.}
  \label{figure_TV_b}
\end{subfigure}
\caption{A comparison of the upper bounds in Theorems \ref{theorem_hitting_time} and \ref{theorem_hitting_time_convex} when $M = K = 16, k = 2 \times 10^{-2}$, and $d = 2$. In Figure \ref{figure_TV_b}, the value of $n$ {depends} on $\epsilon$, which is taken into account for drawing the plots.}
\label{figure_TV_epsilon}
\end{figure}

\end{remark}




\section{The Hitting Time Analysis for Discrete Functions}
\label{sec:discrete}

In this section, two variants of  {stochastic time-varying models are studied for discrete functions}.
In the first model, an unknown discrete function is observed with additive noise whose {estimation} function changes over time due to the {presence} of noise.
In the second model, a time-varying linear model with additive noise is studied.

\subsection{Optimization of Functions with Additive Noise}
\label{sec:problem_statement2}

Consider an unknown discrete function $f: \mathcal{X} \rightarrow \mathcal{R}$, where $\mathcal{X} \subset \mathbb{Z}^d$ is a bounded subset of $d$ integer tuples and $\mathcal{R} \subset \mathbb{R}$ is a subset of real numbers ($\mathbb Z$
denotes the set of integer
numbers).
Denote the strict local minima and maxima, known collectively as strict local extrema, of the unknown function $f$ by $\mathcal{X}^*$ defined as
\begin{equation}
    \label{local_extrema_discrete}
    \begin{aligned}
    \mathcal{X}^* = & \{x^*  \in \mathcal{X} : f(x^*) < f(x), \forall x \in \mathcal{B}(x^*) \} \cup \{x^*  \in \mathcal{X} : f(x^*) > f(x), \forall x \in \mathcal{B}(x^*) \}
\end{aligned}
\end{equation}
where $\mathcal{B}(x^*) = \cup_{j = 1}^d \{{x^* + h_j, x^* - h_j}\} \cap \mathcal{X}$ with ${h_1, \dots, h_d}$ being the standard basis of $\mathbb{Z}^d$.
The goal is to find $\mathcal{X}^*$, the set of strict local extrema of the unknown function $f$.
Although the function $f$ is unknown, inquiries of the function values at points in the domain can be made in consecutive rounds, which are evaluated with added noise signals that are mean zero, independent and identically distributed over time and over $\mathcal{X}$.
Formally speaking, the revealed values of the target function $f$ at round $t \in \{1, 2, \dots\}$ are
\begin{equation}
\label{eq_noise2}
    f_t(x) = f(x) + N_t(x), \quad \forall x \in \mathcal{X},
\end{equation}
where $N_t(x)$ are  {noise signals satisfying Assumption \ref{assump:noise_N}.}
Note that if the noise is disruptive enough, a single set of observed noisy function values $f_t(x)$ for all $x \in \mathcal{X}$ may not represent the unknown target function accurately, making it impossible to find local extrema of the function.
{To address this issue, we estimate} the target function $f$ at round $t - 1$ {by leveraging} the new observations at round $t \in \{2, 3, \dots\}$ as
\begin{equation}
\label{eq_update2}
    \widehat{f}_t(x) = \frac{t - 1}{t} \cdot \widehat{f}_{t - 1}(x) + \frac{1}{t} \cdot f_t(x), \quad \forall x \in \mathcal{X}.
\end{equation}
Note that the {estimation} function $\widehat{f}_t(x)$ changes over time and may not represent the shape of the unknown target function $f$ when $t$ is small.
However, there {may exist a} hitting time $T$ after which  {the {estimation} function $\widehat{f}_t$ shares the same set of local extrema as the target function $f$ with an associated confidence level $1 - a$, where $0 < a \leq 1$.}
As a result, the complexity of finding {the} local extrema of the target function $f$ may be irrelevant to the complexity of finding the local extrema of function $\widehat{f}_t$ before {the} hitting time $T$.
Consequently, the complexity of finding the local extrema of the unknown target function $f$ is related to the hitting time $T$ as well as the computational complexity of optimizing function $\widehat{f}_T$.
{Denote} the set of strict local extrema of $\widehat{f}_t$ by $\widehat{\mathcal{X}}^*_t$, {defined as}
\begin{equation}
    \label{estimated_local_extrema_discrete}
    \begin{aligned}
    \widehat{\mathcal{X}}^*_t = & \left \{\widehat{x}^*  \in \mathcal{X} : \widehat{f}_t(\widehat{x}^*) < \widehat{f}_t(x), \forall x \in \mathcal{B}(\widehat{x}^*) \right \} \cup  \left \{\widehat{x}^*  \in \mathcal{X} : \widehat{f}_t(\widehat{x}^*) > \widehat{f}_t(x), \forall x \in \mathcal{B}(\widehat{x}^*) \right \}.
    \end{aligned}
\end{equation}

\begin{definition}
\label{hitting_time_discrete_added_noise}
{Given $a \in (0, 1],$} the hitting time $T(a)$ for an unknown discrete function $f$ is defined as
\begin{equation}
\label{hitting_time2}
\begin{aligned}
    T(a) = \min \left \{T: \mathbb{P} \left ( \widehat{\mathcal{X}}^*_t = \mathcal{X}^* \right ) \geq 1 - a, {\ \forall t \geq T } \right \},
\end{aligned}
\end{equation}
where $\mathcal{X}^*$ and $\widehat{\mathcal{X}}^*_t$ are defined in \eqref{local_extrema_discrete} and \eqref{estimated_local_extrema_discrete}, respectively.
\end{definition}

The hitting time $T(a)$ depends on the minimum distance of the function values of $f$ at point $x \in \mathcal{X}$ from the function values at its neighbor points. {This distance, denoted by} $\delta(x)$, {is defined as}
\begin{equation}
\label{delta_x_value}
    \delta(x) = \min_{x' \in \mathcal{B}(x)}  {| f(x) - f(x') |}.
\end{equation}

 {In order to simply the analysis, we make the following assumption about the target function $f$.
\begin{assumption}\label{assump:minimum_distance}
The minimum distance $\delta(x)$ of function $f$ is uniformly lower-bounded by a positive number for all $x\in \mathcal{X}$, i.e., $\delta_m = \min_{x \in \mathcal{X}} \delta(x) > 0$.
\end{assumption}
Intuitively, Assumption \ref{assump:minimum_distance} ensures that function values of $f$ at adjacent points are different, so that their noisy values become distinguishable after enough observations.}
The following theorem presents an upper bound {on the} hitting time $T(a)$.

\begin{theorem}
\label{theorem_discrete_upper_bound}
Consider the time-varying function  {$\widehat{f}_t$} in \eqref{eq_update2}.
 {Under Assumptions \ref{assump:noise_N} and \ref{assump:minimum_distance}, 
given $a\in (0,1]$, the associated hitting time $T(a)$} defined in \eqref{hitting_time2}, {satisfies the inequality}
\begin{equation}
\label{UB_1}
    T(a) \leq \frac{2L_N^2}{\delta_m^2} \cdot \ln \left ( \frac{2|\mathcal{X}|}{a} \right ),
\end{equation}
 {where $|\mathcal{X}|$ denotes the number of elements in the set $\mathcal{X}$.}
\end{theorem}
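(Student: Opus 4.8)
The plan is to reduce the event $\{\widehat{\mathcal{X}}^*_t = \mathcal{X}^*\}$ to a single uniform estimation‑accuracy event and then control the latter with Hoeffding's inequality and a union bound over $\mathcal{X}$. First I would unroll the recursion \eqref{eq_update2} to the closed form $\widehat{f}_t(x) = \tfrac{1}{t}\sum_{s=1}^{t} f_s(x) = f(x) + \tfrac{1}{t}\sum_{s=1}^{t} N_s(x)$ for every $x \in \mathcal{X}$, so that the estimation error at $x$ is exactly the empirical mean of the i.i.d.\ zero‑mean noise $N_1(x),\dots,N_t(x)$.

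The core deterministic step is the claim that if $|\widehat{f}_t(x) - f(x)| < \delta_m/2$ for \emph{all} $x \in \mathcal{X}$, then $\widehat{\mathcal{X}}^*_t = \mathcal{X}^*$. To see this, fix $x$ and a neighbor $x' \in \mathcal{B}(x)$ and write $\widehat{f}_t(x) - \widehat{f}_t(x') = \bigl(f(x) - f(x')\bigr) + \bigl(\widehat{f}_t(x) - f(x)\bigr) - \bigl(\widehat{f}_t(x') - f(x')\bigr)$. The perturbation term has absolute value strictly less than $\delta_m/2 + \delta_m/2 = \delta_m \le |f(x) - f(x')|$ by Assumption \ref{assump:minimum_distance} and the definition \eqref{delta_x_value}, so $\widehat{f}_t(x) - \widehat{f}_t(x')$ has the same sign as $f(x) - f(x')$. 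Hence $x$ satisfies the strict‑local‑minimum (respectively, maximum) condition over $\mathcal{B}(x)$ for $\widehat{f}_t$ if and only if it does for $f$, which yields both inclusions $\widehat{\mathcal{X}}^*_t \subseteq \mathcal{X}^*$ and $\mathcal{X}^* \subseteq \widehat{\mathcal{X}}^*_t$ by \eqref{local_extrema_discrete} and \eqref{estimated_local_extrema_discrete}.

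Next I would bound the complement. By Assumption \ref{assump:noise_N} each $N_s(x)$ lies in an interval of length less than $L_N$, so Hoeffding's inequality applied to $\tfrac{1}{t}\sum_{s=1}^{t} N_s(x)$ gives $\mathbb{P}\bigl(|\widehat{f}_t(x) - f(x)| \ge \delta_m/2\bigr) \le 2\exp\bigl(-t\,\delta_m^2/(2L_N^2)\bigr)$, and a union bound over the $|\mathcal{X}|$ points gives $\mathbb{P}\bigl(\exists\, x : |\widehat{f}_t(x) - f(x)| \ge \delta_m/2\bigr) \le 2|\mathcal{X}|\exp\bigl(-t\,\delta_m^2/(2L_N^2)\bigr)$. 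Combining with the deterministic step, $\mathbb{P}(\widehat{\mathcal{X}}^*_t = \mathcal{X}^*) \ge 1 - 2|\mathcal{X}|\exp\bigl(-t\,\delta_m^2/(2L_N^2)\bigr)$. Requiring the right‑hand side to be at least $1-a$ is equivalent to $t \ge \tfrac{2L_N^2}{\delta_m^2}\ln\bigl(\tfrac{2|\mathcal{X}|}{a}\bigr)$, and since this tail bound is monotone decreasing in $t$, the inequality $\mathbb{P}(\widehat{\mathcal{X}}^*_t = \mathcal{X}^*) \ge 1-a$ then holds for all $t$ at least this large; hence the hitting time of Definition \ref{hitting_time_discrete_added_noise} satisfies \eqref{UB_1}.

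I do not expect a genuine obstacle here; the one place that needs care is the order‑preservation argument, where one must check that the strict inequalities are preserved in the correct direction and that $\delta_m$ is genuinely a uniform lower bound valid simultaneously over every point and every incident neighbor pair, so that the single event $\{\,\forall x:\ |\widehat{f}_t(x)-f(x)| < \delta_m/2\,\}$ suffices to force equality of the two extremum sets.
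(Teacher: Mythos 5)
Your argument is correct and follows the paper's own proof essentially step for step: unroll \eqref{eq_update2} to $\widehat{f}_t(x) = f(x) + \tfrac{1}{t}\sum_{s=1}^{t} N_s(x)$, observe that the event $\{\,\forall x:\ |\widehat{f}_t(x)-f(x)| < \delta(x)/2\,\}$ (which you replace by the slightly stronger uniform $\delta_m/2$ version) forces $\widehat{\mathcal{X}}^*_t = \mathcal{X}^*$, and then apply Hoeffding plus a union bound over $\mathcal{X}$. The only difference is cosmetic — you spell out the sign-preservation argument for neighbor pairs more explicitly than the paper does, and you use $\delta_m$ directly rather than bounding $\delta(x)\ge\delta_m$ at the end — neither of which changes the substance.
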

\vspace*{2mm}

\begin{proof}
In order to find an upper bound on the hitting time $T(a)$, note that the hitting event used in \eqref{hitting_time2} satisfies the condition
\begin{equation}
\label{eq_subset2}
    \begin{aligned}
        \left \{ \frac{1}{T} \cdot \Big \| \sum_{t = 1}^{T} N_t(x) \Big \| < \frac{\delta(x)}{2}, \ \forall x \in \mathcal{X} \right \}
        \subseteq \left \{ \widehat{\mathcal{X}}^*_T = \mathcal{X}^* \right \}.
    \end{aligned}
\end{equation}
The above equation holds because \eqref{eq_noise2} and \eqref{eq_update2} result in $\widehat{f}_T(x) = f(x) + \frac{1}{T} \cdot \sum_{t = 1}^{T} N_t(x)$, and if the magnitude of the {noise added} to the true value of function $f$ at point $x$ is less than  {$\delta(x)/2$} for all $x \in \mathcal{X}$, {then} the set of local extrema of the function $\widehat{f}_T$ coincides with {the} set $\mathcal{X}^*$, the local extrema of function $f$.
The probability of the event on the left-hand side of \eqref{eq_subset2} can be  {lower-bounded} as
\begin{equation}
\label{eq_left_hand_side2}
    \begin{aligned}
        \mathbb{P} \left \{ \frac{1}{T} \cdot \Big \| \sum_{t = 1}^{T} N_t(x) \Big \| < \frac{\delta(x)}{2}, \ \forall x \in \mathcal{X} \right \} 
        \overset{(a)}{=} & \ \prod_{i = 1}^{|\mathcal{X}|} \mathbb{P} \left \{ \frac{1}{T} \cdot \Big \| \sum_{t = 1}^{T} N_t(x) \Big \| < \frac{\delta(x)}{2} \right \} \\
        \overset{(b)}{\geq} & \ \prod_{i = 1}^{|\mathcal{X}|}   \left (   1   -   2 \exp   \left (   - \frac{T \delta(x)^2}{2L_N^2} \right )   \right )  \\
        >&   1 - 2 \sum_{i = 1}^{|\mathcal{X}|} \exp \left ( - \frac{T \delta(x)^2}{2L_N^2} \right ) \\
        \geq & \ 1 - 2 |\mathcal{X}| \cdot \exp \left ( - \frac{T \delta_m^2}{2L_N^2} \right ),
    \end{aligned}
\end{equation}
where $(a)$ holds because the added noise signals are independent from each other and $(b)$ follows from Hoeffding's inequality.
Putting \eqref{eq_subset2} and \eqref{eq_left_hand_side2} together, we have
\begin{equation}
\label{eq_lower_bound_prob2}
    \begin{aligned}
        \mathbb{P} \left \{ \widehat{\mathcal{X}}^*_T = \mathcal{X}^* \right \} > 1 - 2 |\mathcal{X}| \cdot \exp \left ( - \frac{T \delta_m^2}{2L_N^2} \right ).
    \end{aligned}
\end{equation}
If $1 - 2 |\mathcal{X}| \cdot \exp \left ( - \frac{T \delta_m^2}{2L_N^2} \right ) \geq 1 - a$ or equivalently $T \geq \frac{2L_N^2}{\delta_m^2} \cdot \ln \left ( \frac{2|\mathcal{X}|}{a} \right ) $, we have  {$\mathbb{P} \left \{ \widehat{\mathcal{X}}^*_T = \mathcal{X}^* \right \} > 1 - a$,} from which the upper bound in \eqref{hitting_time2} follows.
\end{proof}

\subsection{A Special Case for Unimodal Functions}
\label{sec:hitting_time_analysis_convex2}

A function $f$ over a bounded set $\mathcal{X} \subset \mathbb{Z}$ is {called} unimodal if it {has only} one global minimum $x^*\in \mathcal{X}$ {and} $f(i) > f(j)$ for all $i<j \leq x^*$, {$i, j \in \mathcal{X}$}, while $f(i) < f(j)$  for all $x^*\leq i<j$.
Assume that the unknown target function $f$ is unimodal over $\mathcal{X}$, which implies it has a single global minimum.
As mentioned earlier, the time-varying function $\widehat{f}_t$ may not even be unimodal
for small values of $t$ under disruptive noise, and therefore it could have multiple local extrema.
However, the single global minimum of the function $f$ becomes known after the hitting time with an associated confidence level.
In this section, a new notion of hitting time is proposed for unimodal functions that captures the complexity of finding the global minimum of the function and does not take the local extrema of the estimated function $\widehat{f}_t$ into account.

 {Without loss of generality, we additionally assume that the noise signals $N_t(x)$ are continuous random variables.
This implies that the estimation function $\widehat{f}_t$ has a single global minimum with probability $1$.
}
{Let} $\widehat{x}_t^* = \argmin_{x \in \mathcal{X}} \ \widehat{f}_t(x)$ {denote the global minimum}.
The hitting time for a unimodal function $f$ is defined {below}.
\begin{definition}
\label{hitting_time_discrete_unimodal}
{Given $a \in (0, 1],$} the hitting time $T_u(a)$ for {a} unimodal function $f$ with its global minimum at $x^* = \argmin_{x \in \mathcal{X}} \ f(x)$ and {its estimated global minimum} $\widehat{x}_t^* = \argmin_{x \in \mathcal{X}} \ \widehat{f}_t(x)$ is defined as
\begin{equation}
\label{hitting_time_convex2}
\begin{aligned}
    T_u(a) = \min \left \{T: \mathbb{P} \big ( \widehat{x}_t^* = x^* \big ) \geq 1 - a, \ \forall t \geq T \right \}.
\end{aligned}
\end{equation}
\end{definition}

The distance of the function value at point $x \in \mathcal{X}$ from the minimum function value is denoted by $\Delta(x)$, which is defined as
\begin{equation}
    \Delta(x)   =  
    \begin{cases}
      f(x) - f(x^*), \quad & \text{if } x \in \mathcal{X} \setminus \{x^*\}, \\
      \min \{ f(x^*   -   1)   -   f(x^*), f(x^*   +   1)   -   f(x^*) \}, &   \text{if }   x   =   x^* .
  \end{cases}
\end{equation}

The following theorem presents an upper bound {on the} hitting time for a unimodal function.

\begin{theorem}
\label{theorem_unimodal_discrete}
Consider the time-varying function  {$\widehat f_t$} defined in \eqref{eq_update2} with $f$ being a unimodal function.
 {Suppose that Assumptions \ref{assump:noise_N} and \ref{assump:minimum_distance} hold.
Given $a\in (0,1]$, the associated hitting time $T_u(a)$}
satisfies the inequality $T_u(a) \leq T$, where $T$ is the smallest number such that
\begin{equation}
\label{UB_2}
    \exp \left ( - \frac{\delta_m^2 T}{2L_N^2} \right ) + 2 \sum_{\substack{i \in \left [ \left \lfloor  {{|\mathcal{X}|}/{2}} \right \rfloor \right ] }} \exp \left ( - \frac{i^2 \delta_m^2 T}{2L_N^2} \right ) \leq a.
\end{equation}
\end{theorem}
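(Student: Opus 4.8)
The plan is to produce a high-probability event $G_t$ on which the running estimate $\widehat{f}_t$ attains its minimum precisely at $x^*$, and then to bound $\mathbb{P}(G_t^c)$ by a union bound combined with Hoeffding's inequality, exploiting both the unimodality of $f$ and the one-dimensional lattice structure of $\mathcal{X}$.

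First I would unroll \eqref{eq_update2} into the closed form $\widehat{f}_t(x) = f(x) + \bar{N}_t(x)$, where $\bar{N}_t(x) := \frac{1}{t}\sum_{s=1}^{t} N_s(x)$ is an average of $t$ i.i.d., mean-zero random variables of range less than $L_N$; since the noise is continuous, $\widehat{x}_t^*$ is a.s.\ unique, and $\{\widehat{x}_t^* = x^*\}$ holds once $\widehat{f}_t(x^*) < \widehat{f}_t(x)$ for all $x \neq x^*$. Two structural facts will be used: (i) by unimodality of $f$ together with Assumption~\ref{assump:minimum_distance}, every consecutive difference of $f$ along the lattice on either side of $x^*$ is at least $\delta_m$, so telescoping yields $f(x) - f(x^*) \geq |x - x^*|\,\delta_m$ for every $x \neq x^*$; and (ii) for each $k \geq 1$ there are at most two points of $\mathcal{X}$ at lattice distance $k$ from $x^*$.

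I would then take $G_t := \bigl\{ \bar{N}_t(x^*) < \tfrac{\delta_m}{2} \bigr\} \cap \bigcap_{x \neq x^*} \bigl\{ \bar{N}_t(x) > -\tfrac{|x-x^*|\delta_m}{2} \bigr\}$ and verify that on $G_t$, for any $x$ at distance $k = |x-x^*| \geq 1$ one has $\widehat{f}_t(x) - \widehat{f}_t(x^*) = \bigl(f(x)-f(x^*)\bigr) + \bar{N}_t(x) - \bar{N}_t(x^*) > k\delta_m - \tfrac{k\delta_m}{2} - \tfrac{\delta_m}{2} = \tfrac{(k-1)\delta_m}{2} \geq 0$, so that $G_t \subseteq \{\widehat{x}_t^* = x^*\}$. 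A union bound then gives $\mathbb{P}(G_t^c) \leq \mathbb{P}\bigl(\bar{N}_t(x^*) \geq \tfrac{\delta_m}{2}\bigr) + \sum_{x \neq x^*} \mathbb{P}\bigl(\bar{N}_t(x) \leq -\tfrac{|x-x^*|\delta_m}{2}\bigr)$; by the one-sided Hoeffding inequality the first term is at most $\exp\!\bigl(-\tfrac{\delta_m^2 t}{2L_N^2}\bigr)$ and each summand at distance $k$ is at most $\exp\!\bigl(-\tfrac{k^2\delta_m^2 t}{2L_N^2}\bigr)$. Grouping the sum by distance (at most two points per value of $k$) and using that $\exp\!\bigl(-\tfrac{k^2\delta_m^2 t}{2L_N^2}\bigr)$ is decreasing in $k$, a front-loading/rearrangement argument — the extreme case being $x^*$ as close to the center of $\mathcal{X}$ as possible — bounds this part by $2\sum_{i=1}^{\lfloor |\mathcal{X}|/2 \rfloor} \exp\!\bigl(-\tfrac{i^2\delta_m^2 t}{2L_N^2}\bigr)$. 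Adding the two pieces reproduces the left-hand side of \eqref{UB_2} with $T$ replaced by $t$; since that expression is monotone decreasing in $t$, if $T$ is the smallest number making it $\leq a$ then the same holds for every $t \geq T$, so $\mathbb{P}(\widehat{x}_t^* = x^*) \geq 1 - a$ for all $t \geq T$, i.e.\ $T_u(a) \leq T$.

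The routine ingredients are the closed-form update, Hoeffding's inequality, and the telescoping bound $f(x) - f(x^*) \geq |x-x^*|\delta_m$. The two steps needing care are the choice of the asymmetric thresholds ($\delta_m/2$ at $x^*$ versus $|x-x^*|\delta_m/2$ at the other points), which is exactly what makes Hoeffding deliver the $i^2$ appearing in the exponents of \eqref{UB_2}, and the combinatorial rearrangement that collapses a union bound carrying up to $|\mathcal{X}|-1$ distinct distance terms into a sum over only $\lfloor |\mathcal{X}|/2 \rfloor$ indices; this last step, which relies crucially on $\mathcal{X} \subset \mathbb{Z}$ having at most two points at each distance, is the main obstacle.
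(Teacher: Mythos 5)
Your proof is correct and takes essentially the same approach as the paper: both define a high-probability event with the asymmetric thresholds $\delta_m/2$ at $x^*$ and $|x-x^*|\delta_m/2$ elsewhere (the paper uses $\Delta(x)/2$ and then bounds $\Delta(x)\geq|x-x^*|\delta_m$, which amounts to the same thing), apply Hoeffding to each coordinate, and then observe that the worst case places $x^*$ at the center of $\mathcal{X}$ to obtain the $2\sum_{i=1}^{\lfloor|\mathcal{X}|/2\rfloor}$ form. The only cosmetic difference is that you use a union bound directly while the paper expands a product of independent probabilities via $\prod_i(1-a_i) > 1-\sum_i a_i$; both yield \eqref{UB_2}.
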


\begin{proof}
By construction, we have $\Delta(x) > 0$ for all $x \in \mathcal{X}$.
In order to find an upper bound on the hitting time $T_u(a)$, {note that} the hitting event used in \eqref{hitting_time_convex2} satisfies the condition
\begin{equation}
\label{eq_subset_convex12}
    \begin{aligned}
        \bigg \{ & \frac{1}{T} \cdot \sum_{t = 1}^{T} N_t(x) > - \frac{\Delta(x)}{2}, \forall x \in \mathcal{X} \setminus \{ x^* \} \textbf{ and } \frac{1}{T} \cdot \sum_{t = 1}^{T} N_t(x^*) < \frac{\Delta(x^*)}{2} \bigg \} \subseteq \Big \{ \widehat{x}_T^* = x^* \Big \}.
    \end{aligned}
\end{equation}

 {Denote the event on the left-hand side of \eqref{eq_subset_convex12} as $E_t$, whose probability can be lower-bounded as}
\begin{equation}
\label{eq_left_hand_side_convex12}
{\small
    \begin{aligned}
         {  \mathbb{P}\{E_t\}} &{\overset{(a)}{=}}  \ \mathbb{P}\left \{ \frac{1}{T} \cdot \sum_{t = 1}^{T} N_t(x^*) < \frac{\Delta(x^*)}{2} \right \} \times  \prod_{ \substack{x \in \mathcal{X} \setminus \{x^*\} }} \mathbb{P} \left \{ \frac{1}{T} \cdot \sum_{t = 1}^{T} N_t(x) > - \frac{\Delta(x)}{2} \right \} \\
        &{\overset{(b)}{\geq}}    \left (   1   -   \exp   \left (   - \frac{T \Delta(x^*)^2}{2L_N^2} \right )   \right )   \times \hspace{-4mm} \prod_{\substack{x \in \mathcal{X} \setminus \{x^*\} }}   \left (   1   -   \exp   \left (   - \frac{T \Delta(x)^2}{2L_N^2} \right )   \right ) \\
        &{>}  \ 1 - \exp \left ( - \frac{T \Delta(x^*)^2}{2L_N^2} \right )  - \sum_{\substack{x \in \mathcal{X} \setminus \{x^*\}}}  \exp \left ( - \frac{T \Delta(x)^2}{2L_N^2} \right ) \\
        &{\overset{(c)}{\geq}}  \ 1  -  \exp \left (  -  \frac{T \delta_m^2}{2L_N^2} \right )  -  \sum_{\substack{x \in \mathcal{X} \setminus \{x^*\}}}  \exp \left ( - \frac{T (x - x^*)^2 \delta_m^2}{2L_N^2} \right ) \\
        &{\overset{(d)}{\geq}}  \ 1  -  \exp \left ( - \frac{T \delta_m^2}{2L_N^2} \right ) - 2  \sum_{\substack{i \in \left [ \left \lfloor  {{|\mathcal{X}|}/{2}} \right \rfloor \right ] }}  \exp \left ( - \frac{T i^2 \delta_m^2}{2L_N^2} \right )
\end{aligned}}
\end{equation}
where $(a)$ holds true by the independence property of the added noise signals, $(b)$ is due to Hoeffding's inequality, $(c)$ is true because function $f$ is unimodal, $\Delta(x^*) \geq \delta_m$, and $\Delta(x) \geq (x - x^*) \delta_m$, and $(d)$ results from minimizing the equation with respect to  {all possible values of $x^*$, which gives rise to $x^* = \left \lceil |\mathcal{X}|/{2} \right \rceil$} (taking the ceiling corresponding to the summation through $\left \lfloor  {{|\mathcal{X}|}/{2}} \right \rfloor$).
Putting \eqref{eq_subset_convex12} and \eqref{eq_left_hand_side_convex12} together concludes the proof.
\end{proof}
\begin{remark}
 {A number $T$ that satisfies \eqref{UB_2} must exists because the left-hand side of \eqref{UB_2} approaches $0$ when $T\to\infty$.
Also, by substituting the bound in \eqref{UB_1} into \eqref{UB_2}, it can be verified that Theorem \ref{theorem_unimodal_discrete} provides a better bound than Theorem \ref{theorem_discrete_upper_bound} as the properties of unimodal functions are leveraged.}
A comparison of the results of Theorems \ref{theorem_discrete_upper_bound} and \ref{theorem_unimodal_discrete} {along with the details of the simulation model is} depicted in Figure \ref{fig:unimodal_vs_general1}.
\end{remark}

\begin{figure}
\centering
\begin{subfigure}{.48\textwidth}
  \centering
  \includegraphics[width=\linewidth]{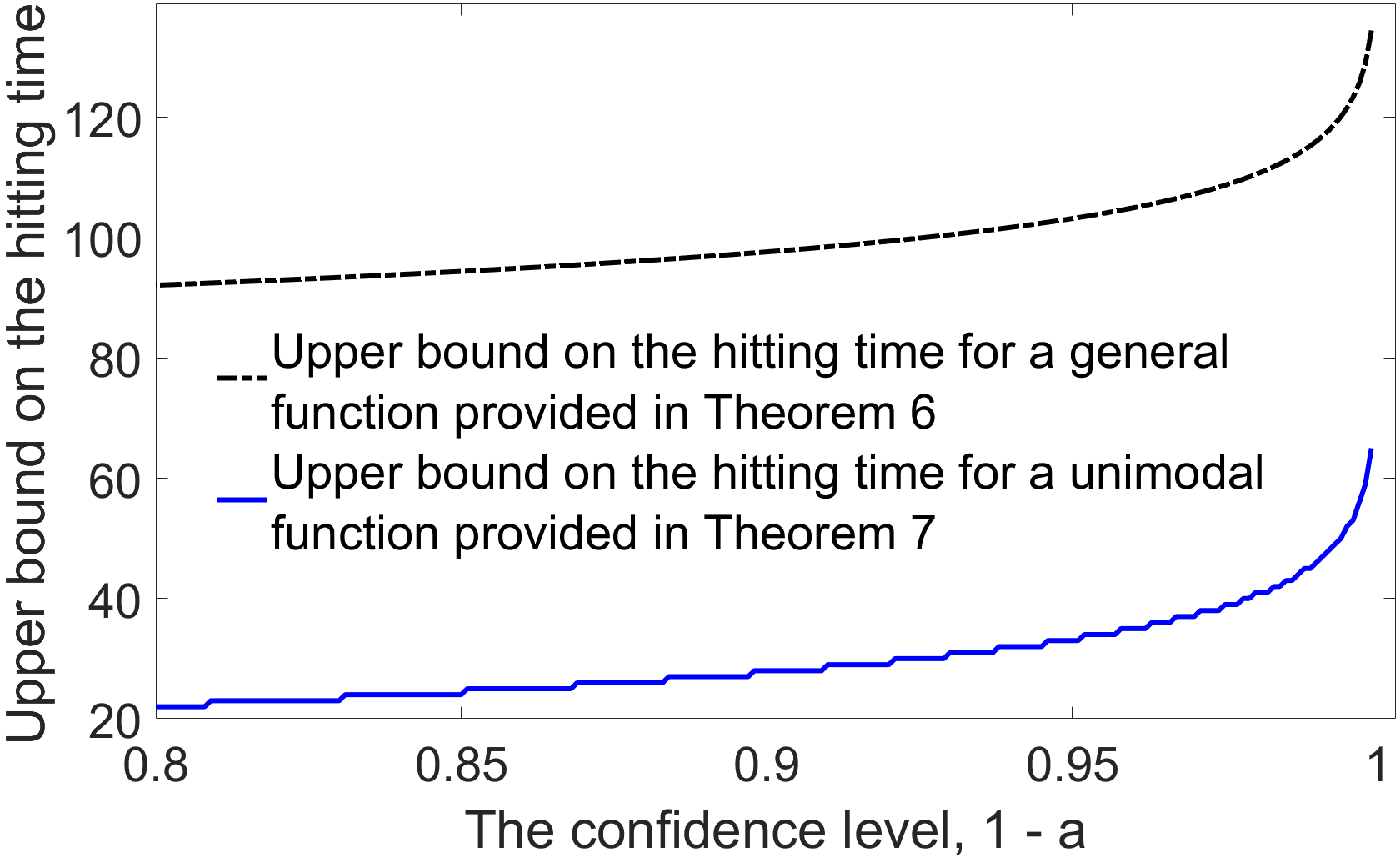}
  \caption{$|\mathcal{D}| = 10000$ and $a$ varies.}
  \label{fig:sub1}
\end{subfigure}\hfill
\begin{subfigure}{.48\textwidth}
  \centering
  \includegraphics[width=\linewidth]{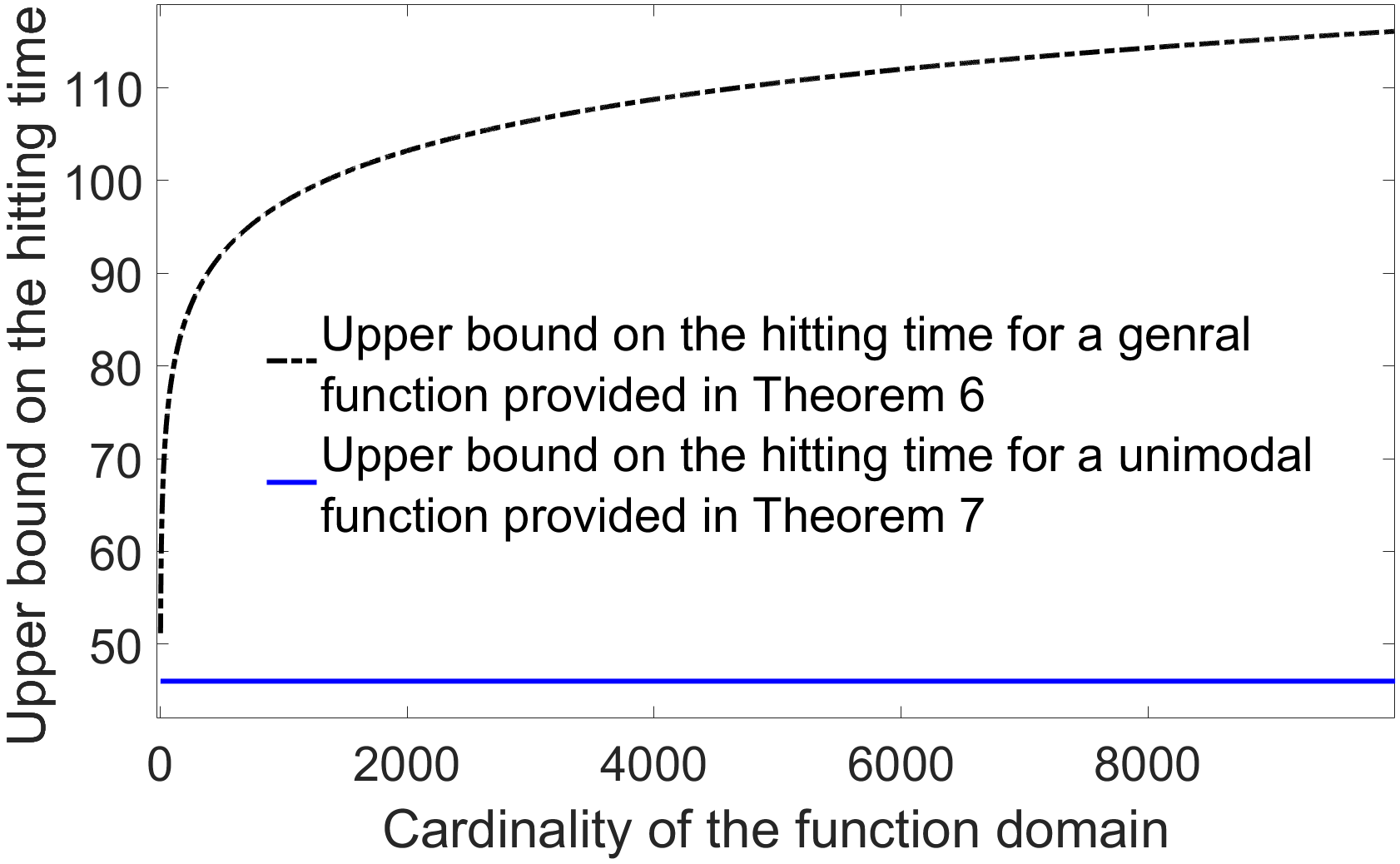}
  \caption{$a = 0.01$ and $|\mathcal{D}|$ varies.}
  \label{fig:sub2}
\end{subfigure}
\caption{A comparison of the upper bounds in Theorems \ref{theorem_discrete_upper_bound} and \ref{theorem_unimodal_discrete} when $L_N = 0.02$ and $\delta_m = 0.01$.}
\label{fig:unimodal_vs_general1}
\end{figure}

\subsection{Time-Varying Linear Model with Additive Noise}
\label{sec:linear-model}
 {In this section, we study a linear model of time-variation and analyze the hitting time under shape-dominant operators.}
 {Consider} the Hilbert space $L^2(\mathcal{X})$, where the inner product of $f$ and $g \in L^{2}(\mathcal{X})$ is defined by $\langle f, g\rangle = \int_{\mathcal{X}} f(x) g(x) dx$. 
We use the same inner product notation when  {the domain $\mathcal{X}$ is a discrete set}. For any nonzero functions $f, g\in L^2$, there exists a bounded linear transformation $\mathcal{T}: L^2(\mathcal{X}) \to L^2(\mathcal{X})$ such that $\mathcal{T} f = g$. In fact, one such transformation is given by $\mathcal{T} h = \frac{\langle f, h\rangle }{\langle f, f\rangle} g$. Since the zero function is trivial to optimize, the restriction to linear transformation is a general framework that captures the varying nature of nonlinear functions.

We further note that for any scalar $\lambda > 0$, the functions $f$ and $\lambda f$ share the same set of local minima. Rescaling by a positive number does not affect the complexity of the optimization problem. Hence, restricting the linear operators $\mathcal{T}$ to have norm $1$ incurs no loss of generality.

In practice, the functions to be minimized are often not specified exactly, due to the rounding error of numerical computation or the inexact nature of the model. We model this limitation by  {the} random perturbation $w$ sampled from some distribution.
Given a sequence of linear operators  {$\{\mathcal{A}_t\}$}
such that $\norm{\mathcal{A}_{ {t}}} = \sup_{f \neq 0} \frac{\norm{\mathcal{A}_{ {t}} f}}{\norm{f}} = 1$ together with the perturbations  {$\{w_t\}$}, consider the following model of linear time variation:
\begin{equation}\label{eq:noisy_linear_operator}
f_{t+1} = \mathcal{T}_t f_t = \mathcal{A}_t f_t + w_t, \quad \text{for } t \in \{0, 1, \dots\}.
\end{equation}
\emph{What properties the operators  {$\{\mathcal{T}_t\}$} should satisfy in order for $f_{t}$ to almost reach a target function $f^*$ at time  {$t=T$}?}
We will provide an answer using the notion of shape dominant operator.
To understand the importance of this problem,
suppose that at time $t=0$, we optimize $f_0$ around a poor local minimum $x_0^*$. If at $t= {T}$, the function $f_{ {T}}$ becomes convex with a unique global minimum $x_{ {T}}^*$, then no matter how optimization is carried out for $f_1$ through $f_{ {T}-1}$, minimizing $f_{ {T}}$ will yield the same solution $x_{ {T}}^*$, which is globally optimal. The effect of minimizing $f_{ {T}}$ cancels out the sub-optimality at time  {$t=0$}.
Moreover, under some technical conditions, the global solution at time ${ {T}}$ can be used to find global solutions at future times using tracking methods~\cite{ding2021escaping, fattahi2020absence, massicot2019line}. In other words, the shape of $f_{ {T}}$ affects the complexity of online optimization in the long run.

 {Now, we introduce the notion of shape dominant operator.
Consider time-varying functions $\{f_t\}$ defined on a finite discrete set $\mathcal{X}= \{x_1, \ldots, x_n\}\subset \mathbb{Z}^d$. Equivalently, $f_t$ can be viewed as a vector in $\bR^{n}$.
For the noisy linear operator $\mathcal{T}_t$ defined in \eqref{eq:noisy_linear_operator}, let $A_t$ denote the associated matrix of the linear operator $\mathcal{A}_t$ represented under the standard basis, for $t\in\{1,2,\dots\}$.
Let $P(A_t, w_t)$ denote the joint distribution of $A_t$ and $w_t$.
}
\begin{definition}\label{def:shape-dominant}
The joint distribution $P(A, w)$ is said to be $(\delta, \sigma, f^*, \phi^*)$ shape dominant if following conditions hold with probability $1$:  {1) the unit vector $f^*$ is the eigenvector of $A$ associated with eigenvalue $1$; 2) the unit vector $\phi^*$ is the eigenvector of $A^\top$ associated with eigenvalue $1$; 3) $\langle f^*, \phi^*\rangle \neq 0$; 4) all other eigenvalues of $A$ have  {absolute values} less than $1-\delta$; 5) conditioned on $A$, the noise $w$ has zero mean and is sub-Gaussian with parameter $\sigma^2$ in the sense that for all $u\in \bR^{n}$ with $\norm{u} \leq 1$,  {it holds that} $\bE[\exp(s u^\top w)] \leq \exp\left(\frac {\sigma^2s^2}{2}\right)$.}
\end{definition}

\begin{theorem}\label{thm:shape-dominant}
 {For the time-varying operator $\mathcal{T}_t$ defined in  \eqref{eq:noisy_linear_operator}, suppose that $P(A_t, w_t)$ is $(\delta, \sigma_t, f^*, \phi^*)$ shape dominant and independent for all $t \in \{0,1, \dots, T-1 \}$,} then,
\begin{align}
 {f_T}  = \frac{\langle \phi^*, f_0 + \sum_{t=0}^{ {T-1}} w_t \rangle}{\langle \phi^*, f^*\rangle} f^* + v + w,
\end{align}
where $\norm{v}\leq (1-\delta)^{ {T}} \left(\norm{f_0} + \frac{\langle \phi^*, f_0\rangle}{\langle \phi^*, f^*\rangle}\right)$ and $w$ is sub-Gaussian with parameter $\sigma^2 = \left( 1 + \frac{1}{\langle \phi^*, f^*\rangle^2} \right) \sum_{t=0}^{ {T-1}}(1-\delta)^{2( {T}-t)}\sigma_t^2$.
\end{theorem}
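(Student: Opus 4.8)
The plan is to diagonalize the linear part of the dynamics along the eigenvalue $1$ that every $A_t$ shares. Introduce the spectral projection $\Pi := \frac{f^*(\phi^*)^\top}{\langle\phi^*,f^*\rangle}$, which is well defined because $\langle\phi^*,f^*\rangle\neq 0$ (condition~3 of Definition~\ref{def:shape-dominant}). Conditions~1 and~2, i.e. $A_t f^*=f^*$ and $A_t^\top\phi^*=\phi^*$ with probability $1$, immediately give $\Pi^2=\Pi$, that $\Pi$ is the rank-one (oblique) projection onto $\mathrm{span}(f^*)$, and that $\Pi A_t=A_t\Pi=\Pi$ for every $t$. Hence $\Pi$ commutes with the linear part of \eqref{eq:noisy_linear_operator}, the state space decomposes as $\mathrm{span}(f^*)\oplus\mathrm{range}(I-\Pi)$ with $\mathrm{range}(I-\Pi)$ invariant under each $A_t$, and $A_t$ acts on this complement through $B_t:=(I-\Pi)A_t(I-\Pi)$; condition~4 (all eigenvalues other than $1$ have modulus below $1-\delta$) is used as the operator-norm contraction $\norm{B_t}\leq 1-\delta$ on $\mathrm{range}(I-\Pi)$.

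Next I would unroll \eqref{eq:noisy_linear_operator} to obtain $f_T=\big(\prod_{t=0}^{T-1}A_t\big)f_0+\sum_{t=0}^{T-1}\big(\prod_{s=t+1}^{T-1}A_s\big)w_t$, with the products ordered largest-index-on-the-left and the empty product equal to $I$, and then split $f_T=\Pi f_T+(I-\Pi)f_T$. Applying $\Pi$ and using $\Pi A_s=\Pi$ repeatedly collapses the first piece exactly: $\Pi f_T=\Pi f_0+\sum_{t=0}^{T-1}\Pi w_t=\frac{\langle\phi^*,f_0+\sum_{t=0}^{T-1}w_t\rangle}{\langle\phi^*,f^*\rangle}f^*$, which is precisely the leading term in the statement. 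For the second piece, the identities $(I-\Pi)A_s=B_s$ and $B_s(I-\Pi)=B_s$ turn it into $v+w$ with $v:=\big(\prod_{t=0}^{T-1}B_t\big)(I-\Pi)f_0$ and $w:=\sum_{t=0}^{T-1}\big(\prod_{s=t+1}^{T-1}B_s\big)(I-\Pi)w_t$.

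The bound on $v$ then follows from $\norm{v}\leq\big(\prod_{t=0}^{T-1}\norm{B_t}\big)\norm{(I-\Pi)f_0}\leq(1-\delta)^T\norm{f_0-\Pi f_0}$ together with $\norm{f_0-\Pi f_0}\leq\norm{f_0}+\norm{\Pi f_0}=\norm{f_0}+\abs{\langle\phi^*,f_0\rangle}/\abs{\langle\phi^*,f^*\rangle}$. For $w$, write $w=\sum_{t=0}^{T-1}M_t w_t$ with $M_t:=\big(\prod_{s=t+1}^{T-1}B_s\big)(I-\Pi)$; since $\Pi$ is a nontrivial idempotent, the standard identity $\norm{I-\Pi}=\norm{\Pi}$ together with $\norm{\Pi}=\norm{f^*}\norm{\phi^*}/\abs{\langle\phi^*,f^*\rangle}=1/\abs{\langle\phi^*,f^*\rangle}$ yields $\norm{M_t}\leq(1-\delta)^{T-1-t}/\abs{\langle\phi^*,f^*\rangle}$. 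Conditioning on $A_0,\dots,A_{T-1}$, the summands $M_t w_t$ are independent and mean zero, and each is a linear image of the $\sigma_t^2$-sub-Gaussian vector $w_t$, hence $\sigma_t^2\norm{M_t}^2$-sub-Gaussian; therefore $w$ is conditionally sub-Gaussian with parameter $\sum_{t=0}^{T-1}\sigma_t^2\norm{M_t}^2$, and since this bound is a deterministic constant, it holds unconditionally as well. Tracking the operator-norm factors through this sum then produces the sub-Gaussian parameter $\sigma^2$, with the $1/\langle\phi^*,f^*\rangle^2$ dependence coming from $\norm{I-\Pi}$ and the geometric weights $(1-\delta)^{2(T-t)}$ from the truncated products of the $B_s$. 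The mutual independence of the pairs $(A_t,w_t)$ in $t$ assumed in the theorem is exactly what legitimizes this condition-then-decondition step.

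The main obstacle is the claim $\norm{B_t}\leq 1-\delta$: for a general non-normal $A_t$, the eigenvalue bound in condition~4 does not on its own control the operator norm of the compression $(I-\Pi)A_t(I-\Pi)$, so one must either read condition~4 as directly asserting this operator-norm contraction on the complementary invariant subspace (the reading I would adopt) or pass to an $A_t$-adapted norm in which the spectral gap becomes a norm gap. A secondary point requiring care is bookkeeping the non-commuting matrix products in the correct order and handling the oblique projection honestly — in particular not conflating $\norm{I-\Pi}$ with $1$, since $\Pi$ fails to be self-adjoint precisely when $f^*\neq\phi^*$, which is the regime in which the factor $\langle\phi^*,f^*\rangle$ matters.
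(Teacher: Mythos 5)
Your proof is essentially the paper's proof, phrased in terms of the oblique projection $\Pi = f^*(\phi^*)^\top / \langle\phi^*, f^*\rangle$ rather than an explicit adapted basis. The paper works with the invariant complement $\mathcal{G} = \{g : \langle\phi^*, g\rangle = 0\}$, which is exactly $\ker\Pi = \mathrm{range}(I-\Pi)$, and the paper's compression $A_t'$ in the block-triangular form \eqref{eq:A-decompose} is your $B_t = (I-\Pi)A_t(I-\Pi)$ restricted to $\mathcal{G}$. The unrolling of \eqref{eq:noisy_linear_operator}, the operator-norm bound on $v$, and the conditional sub-Gaussianity argument for $w$ all match. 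Your route to the sub-Gaussian parameter, via $\norm{I-\Pi} = \norm{\Pi} = 1/\abs{\langle\phi^*, f^*\rangle}$ for the nontrivial idempotent $\Pi$, is a bit cleaner than the paper's: the paper decomposes $h_t = (I-\Pi)w_t$ and bounds the moment generating function of two correlated linear functionals of $w_t$ by separating them, a step that as written would actually need an extra constant since $\norm{a+b}^2 \leq \norm{a}^2 + \norm{b}^2$ fails in general.

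The obstruction you single out is real and is present in the paper's proof as well. Condition 4 of Definition \ref{def:shape-dominant} is a spectral-radius bound, and for a non-normal $A_t$ this does not by itself imply the Euclidean operator-norm estimate $\norm{A_t'} \leq 1-\delta$ (equivalently $\norm{B_t}\leq 1-\delta$) on the invariant complement; the paper simply asserts this from the eigenvalue condition. Your suggested reading, namely take condition 4 to mean a norm contraction on the complement, or pass to a norm uniformly adapted to the family $\{A_t\}$, is what the paper implicitly assumes, and without such a strengthening both proofs have the same unfilled step.
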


\begin{proof}
 {Consider the subspace $\mathcal{G} = \{g \in \bR^{n}, \langle \phi^*, g\rangle=0\}$.}
Since $\langle \phi^*, f^*\rangle \neq 0$, we have $f^* \notin \mathcal{G}$.
Since $\phi^*$ is the eigenvector of  {$A_t^\top$}, the following holds for all $g \in \mathcal{G}$
\begin{align}
\langle \phi^*,  {A_t}g\rangle  = \langle  {A_t}^\top \phi^*, g\rangle = \langle \phi^*, g\rangle = 0.
\end{align}
Therefore, $ {A_t} g \in \mathcal{G}$, and $\mathcal{G}$ is an invariant subspace of  {$A_t$} in $\bR^{n}$  {for $t\in \{0,1, \dots, T-1 \}$}. Let a basis of $\mathcal{G}$ be given by $\{g_1, \ldots, g_{n-1}\}$. Then, $B = \{f^*, g_1, \ldots, g_{n-1}\}$ is a basis of $\bR^{n}$, under which the linear operator  {$A_t$} takes the form
\begin{align}
 {A_t} =
\begin{bmatrix*}
1 & 0 & \ldots & 0 \\
0 \\
\vdots & &  {A_t^\prime} \\
0 \\
\end{bmatrix*}, \label{eq:A-decompose}
\end{align}
where  {$A_t^\prime$} is a random matrix in $\bR^{(n-1) \times (n-1)}$. With a slight abuse of notation, we regard  {$A_t'$} as a linear transformation from $\mathcal{G}$ to $\mathcal{G}$. Note that $ {\norm{A_t'}} \leq 1-\delta$ because all other eigenvalues of  {$A_t$} have norm less than $1-\delta$.
Under the basis $B$, $f_0$ has the representation  {$f_0 =\frac{\langle \phi^*, f_0\rangle}{\langle \phi^*, f^*\rangle} f^* + g$,
where $g\in \mathcal{G}$.} As a result,
\begin{equation}
\begin{aligned}
 {f_T} & = \mathcal{T}_{ {T-1}} \circ \cdots \circ \mathcal{T}_0 f_0\\
& =   A_{ {T-1}} \cdots A_0 f_0 + \sum_{ {t=0}}^{ {T-1}} A_{ {T-1}} \cdots  {A_{t+1} w_t} \\
& =  \frac{\langle \phi^*, f_0\rangle}{\langle \phi^*, f^*\rangle} f^* + A_{ {T-1}}' \ldots A_1' g +  {\sum_{t=0}^{T-1} A_{T-1} \cdots A_{t+1} w_t}.
\end{aligned}
\end{equation}
The norm estimate gives rise to
\begin{equation}
\begin{aligned}
\norm{A_{ {T-1}}'   \ldots   A_1' g}    &\leq   (1 - \delta)^{ {T}}   \cdot   \norm{g}   \leq   (1 - \delta)^{ {T}}   \cdot   \left( \norm{f_0}   +   \abs{\frac{\langle \phi^*, f_0\rangle}{\langle \phi^*, f^*\rangle}}\right) ,
\end{aligned}
\end{equation}
where the triangle inequality is used. Similarly, one can write  {$w_t = \frac{\langle \phi^*, w_t\rangle}{\langle \phi^*, f^*\rangle} f^* + h_t$, where $h_t \in \mathcal{G}$.} We have
\begin{equation}
A_{ {T-1}} \cdots  {A_{t+1} w_t} = \frac{\langle \phi^*,  {w_t}\rangle}{\langle \phi^*, f^*\rangle} f^* + A'_{ {T-1}} \cdots  {A'_{t+1} h_t}.
\end{equation}
For all $u\in \bR^{n}$ with $\norm{u} \leq 1$, it holds that
\begin{equation}
\begin{aligned}
& \quad \, \bE \left [\exp \left (s \left  \langle u, A_{ {T-1}}' \cdots  {A_{t+1}'  h_t} \right \rangle \right ) \right ] \\
& = \bE \left [\exp \left (s \left \langle  {A_{t+1}'^\top} \cdots A_{ {T-1}}'^\top  u,  {h_t} \right \rangle \right ) \right ] \\
& =  \bE \left [\exp \left (s \left \langle A_{ {t+1}}'^\top \cdots A_{ {T-1}}'^\top  u,  {w_t} - \frac{\langle \phi^*,  {w_t}\rangle}{\langle \phi^*, f^*\rangle} f^* \right \rangle \right ) \right ] \\
& = \bE \Bigg[\exp \left (s \left \langle A_{ {t+1}}'^\top \cdots A_{ {T-1}}'^\top  u ,  {w_t} \right \rangle \right ) \times  \exp \left (s \left \langle - \frac{\langle A_{ {t+1}}'^\top \cdots A_{ {T-1}}'^\top  u, f^* \rangle}{\langle \phi^*, f^*\rangle} \phi^*,  {w_t} \right \rangle \right )\Bigg] \\
& \leq \exp\left(\frac {\sigma_{ {t}}^2s^2 \norm{A_{ {t+1}}'^\top \cdots A_{ {T-1}}'^\top u}^2}{2}\right) \times  \exp \left( \frac{\sigma_{ {t}}^2 s^2}{2} \left(\frac{\langle A_{ {t+1}}'^\top \cdots A_{ {T-1}}'^\top  u, f^* \rangle}{\langle \phi^*, f^*\rangle}\right)^2\right)\\
& \leq \exp\left(\frac {\sigma_{ {t}}^2s^2(1-\delta)^{2( {T-t})}\left( 1 + \frac{1}{\langle \phi^*, f^*\rangle^2} \right)}{2}\right),
\end{aligned}
\end{equation}
 {which} implies that $  {A'_{T-1} \cdots A'_{t+1} h_t}$ is sub-Gaussian with parameter $\sigma_{ {t}}^2(1-\delta)^{2( {T-t})}\left( 1 + \frac{1}{\langle \phi^*, f^*\rangle^2} \right)$, and thereby, $ {\sum_{t=0}^{T-1} A_{T-1}' \cdots A_{t+1}' h_t}$ is sub-Gaussian with parameter $\sigma^2 = \left( 1 + \frac{1}{\langle \phi^*, f^*\rangle^2} \right)  {\sum_{t=0}^{T-1}(1-\delta)^{2(T-t)}\sigma_t^2}$. 
 {This completes the proof.}
\end{proof}

Theorem~\ref{thm:shape-dominant} states that if the time-varying model is given by shape dominant operators, the function  {$f_T$} decomposes into the sum of dominating shape $f^*$, a bias term $v$ that gradually fades away, and a cumulating noise term that discounts noise in previous iterations.
We provide a bound {on the} hitting time {below}.
\begin{theorem}\label{thm:hitting}
Under the same assumptions made in Theorem~\ref{thm:shape-dominant},  {for a given $\epsilon >0$, define the associated hitting time $T(\epsilon)$ as}
 \begin{align} \label{eq:hitting-time-def}
         {T(\epsilon) = \min \big \{T: \exists \lambda\in \bR \text{ s.t. }\norm{f_T - \lambda f^*} < \epsilon \big \}}.
    \end{align}
 {Then, for all $T> \frac{\log{2\left(\norm{f_0} +  \abs{\frac{\langle \phi^*, f_0\rangle}{\langle \phi^*, f^*\rangle}}\right)} - \log \epsilon}{\log \frac{1}{1-\delta}}$, it holds that}
{\small{\begin{equation}
 {
\bP(T(\epsilon)   \geq   T)   \leq  
C_n   \exp   \left(   - \frac{\epsilon^2}{32   \left(   1   +   \frac{1}{\langle \phi^*, f^*\rangle^2}   \right)   \sum_{t=0}^{T-1}(1 - \delta)^{2(T-t)}\sigma_t^2} \right) ,
}
\end{equation}}}

\noindent where $C_n$ is a universal constant depending only on $n$.
\end{theorem}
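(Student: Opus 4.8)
The plan is to feed the structural decomposition of Theorem~\ref{thm:shape-dominant} into a standard sub-Gaussian tail bound for the Euclidean norm of a random vector. By Theorem~\ref{thm:shape-dominant}, under the stated hypotheses we have
\begin{equation}
f_T = \lambda_T\, f^* + v + w,\qquad \lambda_T := \frac{\langle \phi^*,\, f_0 + \sum_{t=0}^{T-1} w_t\rangle}{\langle \phi^*, f^*\rangle},
\end{equation}
with $\|v\| \le (1-\delta)^T\big(\|f_0\| + \abs{\langle\phi^*,f_0\rangle/\langle\phi^*,f^*\rangle}\big)$ almost surely and with $w$ sub-Gaussian of parameter $\sigma^2 = \big(1 + \langle\phi^*,f^*\rangle^{-2}\big)\sum_{t=0}^{T-1}(1-\delta)^{2(T-t)}\sigma_t^2$. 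Picking $\lambda = \lambda_T$ in \eqref{eq:hitting-time-def} gives $\min_{\lambda}\|f_T - \lambda f^*\| \le \|v+w\| \le \|v\| + \|w\|$, so if $\|v\| + \|w\| < \epsilon$ then the target is reached at step $T$. Consequently the event that the $\epsilon$-neighbourhood of $\bR f^*$ has not been hit by time $T$ --- which, up to the indexing convention in \eqref{eq:hitting-time-def}, is the event whose probability we must bound --- is contained in $\{\|v\| + \|w\| \ge \epsilon\}$.

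I would first remove the bias term deterministically. The hypothesis $T > \big(\log 2(\|f_0\| + \abs{\langle\phi^*,f_0\rangle/\langle\phi^*,f^*\rangle}) - \log\epsilon\big)\big/\log\tfrac{1}{1-\delta}$ is precisely equivalent to $(1-\delta)^T < \epsilon\big/\big(2(\|f_0\| + \abs{\langle\phi^*,f_0\rangle/\langle\phi^*,f^*\rangle})\big)$, hence $\|v\| < \epsilon/2$ with probability one by the bound on $\|v\|$ above. Thus the bad event is contained in $\{\|w\| \ge \epsilon/2\}$, and it only remains to bound $\mathbb{P}(\|w\| \ge \epsilon/2)$.

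For the probabilistic part I would run an $\epsilon$-net argument on the sphere. The proof of Theorem~\ref{thm:shape-dominant} in fact establishes that $\langle u, w\rangle$ is a scalar sub-Gaussian variable with parameter $\sigma^2$ for every fixed $u$ with $\|u\|\le 1$, so a Chernoff bound gives $\mathbb{P}(\langle u, w\rangle \ge s) \le \exp(-s^2/(2\sigma^2))$ for all $s>0$. Fix a $\tfrac{1}{2}$-net $\mathcal{N}$ of the unit sphere $S^{n-1}\subset\bR^n$ with $|\mathcal{N}| \le 5^n$; the usual approximation estimate yields $\|w\| \le 2\max_{u\in\mathcal{N}}\langle u,w\rangle$, and a union bound over $\mathcal{N}$ applied with $s = \epsilon/4$ gives
\begin{equation}
\mathbb{P}\big(\|w\|\ge \tfrac{\epsilon}{2}\big) \;\le\; 5^n\exp\!\Big(-\frac{(\epsilon/4)^2}{2\sigma^2}\Big) \;=\; 5^n\exp\!\Big(-\frac{\epsilon^2}{32\,\sigma^2}\Big).
\end{equation}
Substituting $\sigma^2 = \big(1 + \langle\phi^*,f^*\rangle^{-2}\big)\sum_{t=0}^{T-1}(1-\delta)^{2(T-t)}\sigma_t^2$ and taking $C_n = 5^n$ produces the claimed inequality.

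The only non-routine step is the sub-Gaussian norm tail bound: recovering the stated exponent requires keeping track of the factor $2$ from the net, the factor $2$ in the Chernoff exponent, and the choice $s=\epsilon/4$, which together yield the constant $32$. The remaining ingredients --- the decomposition, the deterministic estimate $\|v\|<\epsilon/2$, and the passage from the hitting-time event to ``$f_T$ misses the ray $\bR f^*$'' (a matter of the index convention in \eqref{eq:hitting-time-def}) --- are immediate from Theorem~\ref{thm:shape-dominant}.
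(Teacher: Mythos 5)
Your proposal is correct and follows essentially the same approach as the paper: decompose $f_T$ via Theorem~\ref{thm:shape-dominant}, choose $\lambda = \lambda_T$ and split into a deterministic bias bound $\|v\| < \epsilon/2$ (enforced by the condition on $T$) and a probabilistic bound $\bP(\|w\| \geq \epsilon/2)$ via the sub-Gaussian tail. The only difference is that you explicitly carry out the $\tfrac{1}{2}$-net argument over the sphere to derive $C_n = 5^n$ and the exponent constant $32$, whereas the paper invokes the sub-Gaussian norm tail bound as a cited/standard fact without spelling it out; your version is a self-contained elaboration of the same step, not a different route.
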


\begin{proof}
 {By Theorem~\ref{thm:shape-dominant}, for a fixed number $T$, we have the following decomposition for $f_T$:}
\begin{equation}
        f_{ {T}} = \frac{\langle \phi^*, f_0 +  {\sum_{t=0}^{T-1} w_t} \rangle}{\langle \phi^*, f^*\rangle} f^* + v^{( {T})} + w^{( {T})},
\end{equation}
where $\norm{v^{( {T})}} < (1-\delta)^{ {T}} \left(\norm{f_0} + \abs{\frac{\langle \phi^*, f_0\rangle}{\langle \phi^*, f^*\rangle}}\right)$ and  {$w^{(T)} = \sum_{t=0}^{T-1} A_{T-1}' \cdots A_{t+1}' h_t$} is sub-Gaussian with parameter $\sigma^2 = \left( 1 + \frac{1}{\langle \phi^*, f^*\rangle^2} \right)  {\sum_{t=0}^{T-1}(1-\delta)^{2(T-t)}\sigma_t^2}$. 
    From the definition of the hitting time $T(\epsilon)$ in \eqref{eq:hitting-time-def}, we have
\begin{equation}
 {\bP(T(\epsilon) <  T) \geq \bP\left(\norm{v^{(T)}} < \epsilon/2, \norm{w^{(T)}} < \epsilon/2\right).}
\end{equation}
    When $ {T}> \frac{\log{2\left(\norm{f_0} +  \abs{\frac{\langle \phi^*, f_0\rangle}{\langle \phi^*, f^*\rangle}}\right)} - \log \epsilon}{\log \frac{1}{1-\delta}}$, the bound $\norm{v^{( {T})}} < \epsilon/2$ is satisfied. Since $w^{( {T})}$ is sub-Gaussian with parameter $\sigma^2$, the tail-bound for $w^{( {T})}$ yields
{\small
\begin{equation}
{ {
\bP   \left(\norm{w^{({T})}}   <   \epsilon/2\right)    =   1   -   \bP\left(\norm{{w^{{(T)}}}}   >   \epsilon/2\right)  \geq   1   -   C_n   \exp \left( -\frac{\epsilon^2}{32 \sigma^2} \right) ,
}}
\end{equation}}

\noindent where $C_n$ is a universal constant depending only on $n$. 
     {This completes the proof.}
\end{proof}

To understand the above bound, consider a fixed  {time $T$}. When  {$\sigma_t$} decreases, the bound becomes smaller. 
As a result, with a smaller random perturbation, it is more likely to reach the target function faster. 
 {When} $\epsilon$ increases, the bound  {also} becomes smaller, which matches the intuition that a larger neighborhood is easier to reach than a smaller one.

\begin{remark}
 {The analysis in this section}
can be generalized {to} continuous functions by working through eigenfunctions as opposed to eigenvectors.
We briefly discuss this in the special case where  {${L}^2(\mathcal{X})$} has a finite number of bases.
Let the inner product be $\langle f , g \rangle = \int_{ {\mathcal{X}}} f(x) \cdot g(x) dx$ and the function space to have an orthonormal basis given by the set of functions $\{ u_1, u_2, \dots, u_n \}$  {such that}
\begin{equation}
    \label{orthonormal_basis}
    \langle u_i, u_j \rangle = \int_{ {\mathcal{X}}} u_i(x) \cdot u_j(x) dx =
    \begin{cases}
      1 & \text{if } i = j\\
      0 & \text{if } i \neq j
    \end{cases}.
\end{equation}
Note that any function can be decomposed into a linear combination of the basis functions, i.e.,  {$f(x) = \sum_{j = 1}^n a_j \cdot u_j(x)$,} where the coefficients can be stacked into a column vector $a = [a_1, a_2, \dots, a_n]^T$.
Define the matrix $A$ representing the linear operator $\mathcal{T}$ with {the} elements
\begin{equation}
    \label{A_matrix}
    A_{ij} = \langle u_i, \mathcal{T} (u_j) \rangle = \int_{ {\mathcal{X}}} u_i(x) \cdot \mathcal{T} \big (u_j(x) \big ) dx.
\end{equation}
 {There exists a vector $b = [b_1, b_2, \dots, b_n]^T$ such that applying the operator $\mathcal{T}$ on the decomposed form of $f(x)$ yields}
\begin{equation}
    \label{apply_operator}
    \mathcal{T} \big ( f(x) \big )
    = \sum_{j = 1}^n a_j \cdot \mathcal{T} \big ( u_j(x) \big )
    = \sum_{j = 1}^n b_j \cdot u_j(x).
\end{equation}
Taking the inner product of both sides of the above equation with an arbitrary basis function $u_i$ {leads to}
\begin{equation}
    \sum_{j = 1}^n a_j \hspace{-0.25mm} \cdot \hspace{-0.25mm} \big \langle u_i, \mathcal{T} \big ( u_j \big ) \big \rangle
       =  \sum_{j = 1}^n b_j \hspace{-0.25mm} \cdot \hspace{-0.25mm} \langle u_i, u_j \rangle \Rightarrow \sum_{j = 1}^n a_j \hspace{-0.25mm} \cdot \hspace{-0.25mm} A_{ij}  =  b_i.
\end{equation}
The above equation is the matrix multiplication $Aa = b$, which is the matrix  {associated with} $\mathcal{T}$ acting upon the function $f(x)$ expressed in the orthonormal basis.
If $f(x)$ is an eigenfunction of transformation $\mathcal{T}$ with eigenvalue $\lambda$, we have $Aa = \lambda a$.
{Hence, the results of Theorem \ref{thm:hitting} can be applied to continuous functions in a function space with a finite number of bases.}
The extension to the case with an infinite, but countable, number of bases is similar under some technical assumptions.
\end{remark}

\section{Simulation Results}
\label{sec:simulation}
In this section, the adversarial attack on the computation of value iteration is simulated for an agent interacting with an environment depicted in Figure \ref{fig:ad_VI}.
\begin{figure}
\centering
\begin{subfigure}[t]{.43\textwidth}
  \centering
  \includegraphics[width=\linewidth]{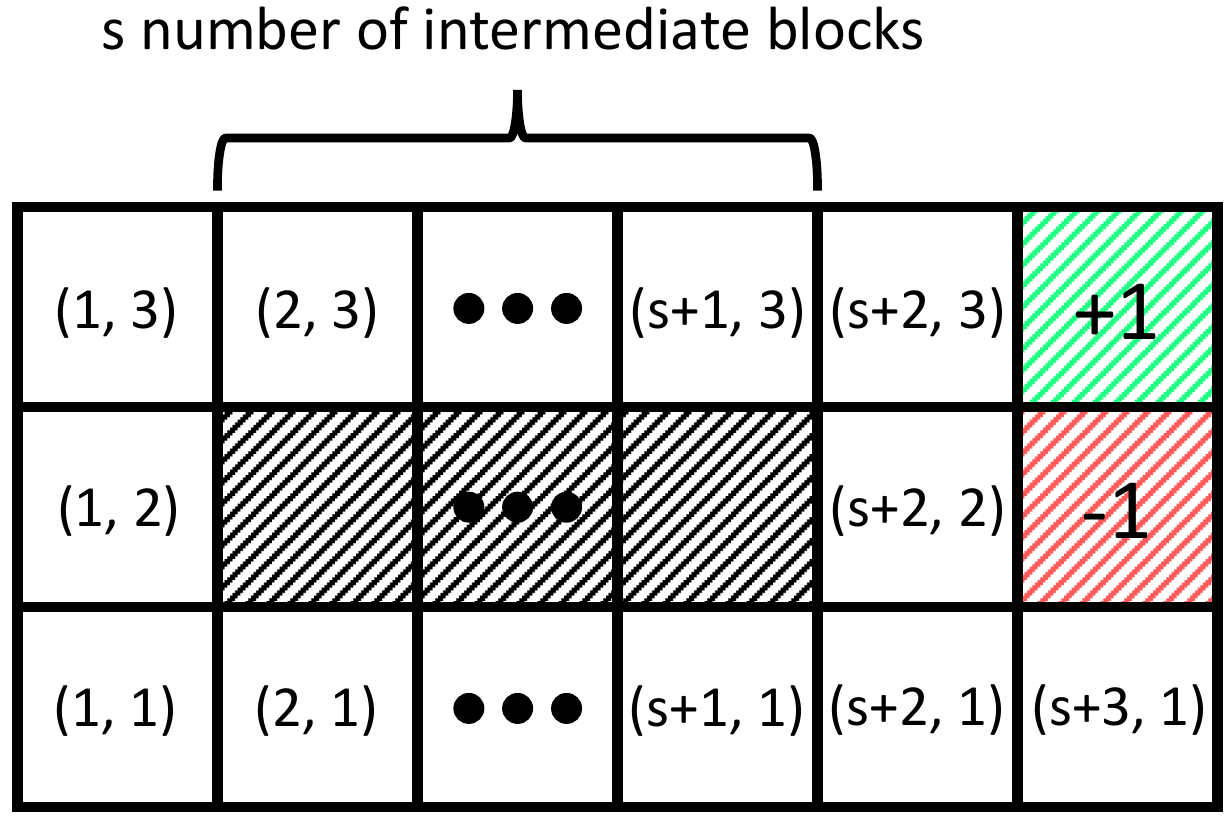}
  \caption{}
  \label{fig:ad_VIb}
\end{subfigure}\hspace{1cm}
\begin{subfigure}[t]{.26\textwidth}
  \centering
  \includegraphics[width=\linewidth]{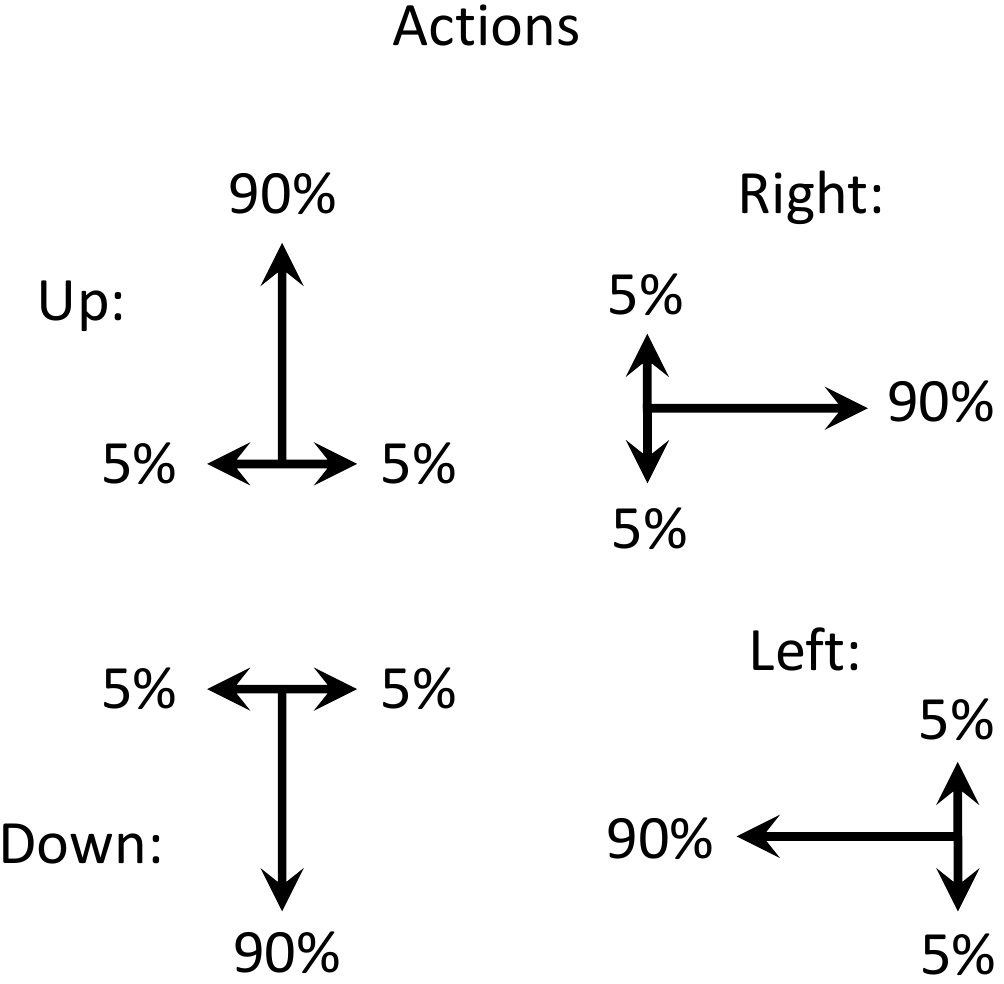}
  \caption{}
  \label{fig:ad_VIa}
\end{subfigure}
\caption{{(a)} the agent interacts with an environment, {(b) the agent} has a set of four actions in each state.}
\label{fig:ad_VI}
\end{figure}
\begin{figure}
\centering
\begin{subfigure}[t]{.488\textwidth}
  \centering
  \includegraphics[width=\linewidth]{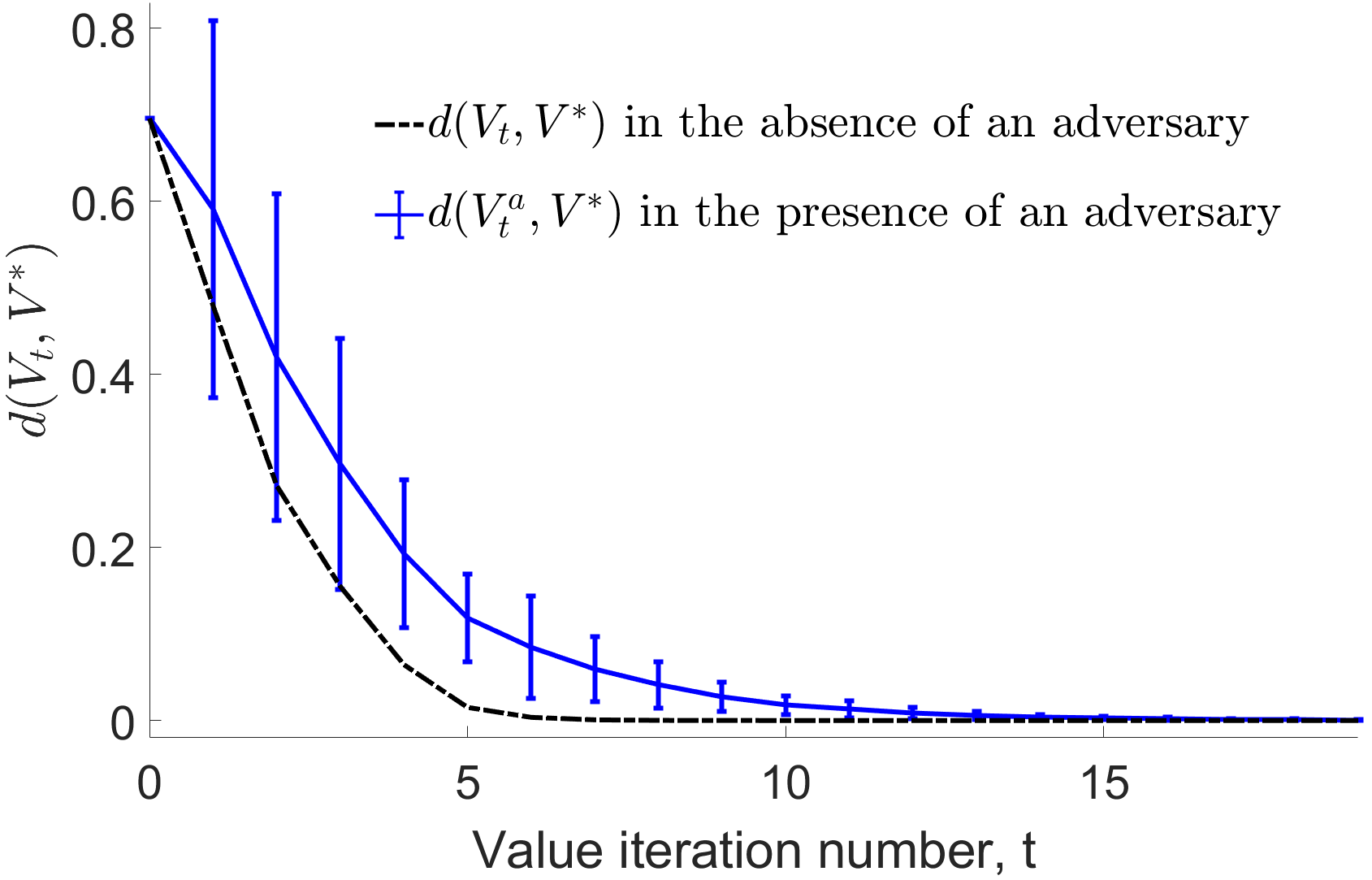}
  \caption{\label{first}A comparison of value iteration convergence in the absence and presence of an adversary.}
  \label{fig:ad_VI_sub1}
\end{subfigure}\hfill
\begin{subfigure}[t]{.488\textwidth}
  \centering
  \includegraphics[width=\linewidth]{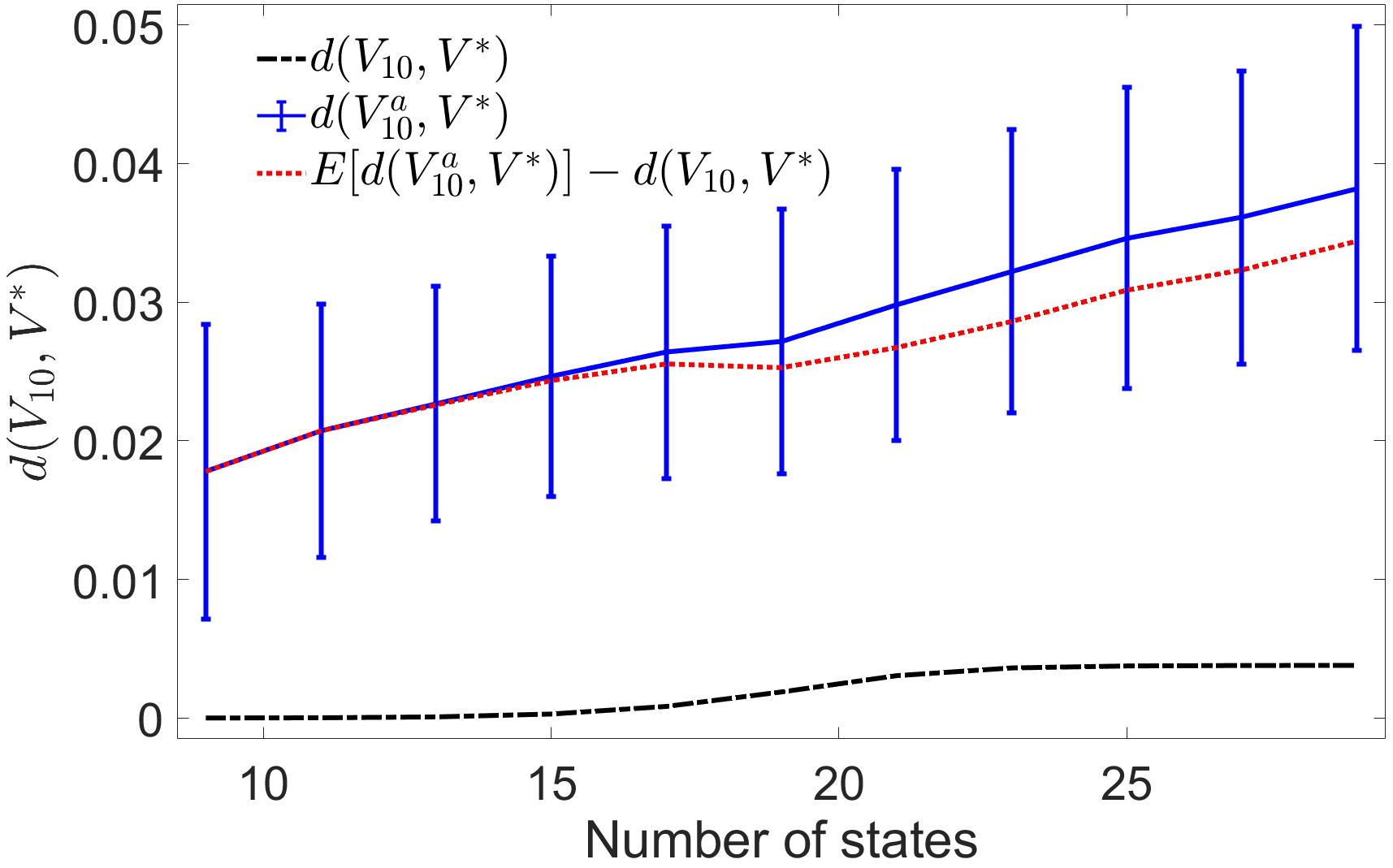}
  \caption{\label{second}The effect of an adversary versus the number of states.}
  \label{fig:ad_VI_sub2}
\end{subfigure}
\caption{The effect of an adversary on the convergence of value iteration.}
\label{fig:unimodal_vs_general}
\end{figure}
The agent can take any of the four actions Up, Down, Right, and Left in each of the non-terminal states.
By taking an action, the agent {moves} one block toward the desired action $90\%$ of the time, or {moves} one block to the right or left of the desired taken action uniformly at random $10\%$ of the time.
The agent bounces back to its original state before taking an action if movement in the direction described above is not possible due to the walls marked with diagonal strips or exiting the environment.
The agent is incurred a cost of $0.02$ by each move and there are two terminal states in which the agent receives an immediate reward of +1 and -1 as shown in Figure \ref{fig:ad_VI}.
In order to determine the optimal path for the agent starting from any of the states, the value function is calculated using synchronous value iteration.
In our simulated example, an adversary contaminates the value function by expanding up to $Q = 1.8$ in a random direction, withholding the contraction, $20\%$ of the time.
As a result, the distance of the time-varying value function from the true value function based on the $L^2$-norm is affected negatively as depicted in Figure \ref{fig:ad_VI_sub1}, where the starting function is the all-zero function in our simulations and the average and standard deviations are estimated by 1000 rounds of independent runs of the value iteration.
Furthermore, the negative effect of the adversary is {worsened} by increasing the cardinality of the state space in the studied example.
In order to show this, the number of intermediate blocks in Figure \ref{fig:ad_VI} is changed from {1} to {10}, i.e., the number of states is changed from {9} to 27, and the distance between the value function at the tenth iterate and the true value function is depicted in Figure \ref{fig:ad_VI_sub2}.
As {shown} in Figure \ref{fig:ad_VI_sub2}, $\mathbb{E} \big [d(V_{10}^a, V^*) \big ] - d(V_{10}, V^*)$ has an increasing trend as the number of states increases, where $V_{10}^a$ is value function at the tenth iterate in the presence of an adversary and $V_{10}$ is the corresponding function in the absence of an adversary, and the dependence of value function on the number of states is eliminated to keep the notations simple.

\section{Conclusion and Future work}
\label{sec:conclusion}

Multiple models of stochastic time variation along with their corresponding notions of hitting time are studied in this {paper}.
In particular, we develop
a probabilistic Banach fixed-point theorem {that proves the convergence of the value iteration method with} a probabilistic contraction-expansion transformation {with an associated confidence level, which} finds applications to adversarial attacks on computation of {the} value iteration {method}.
{We prove that the hitting time of the value function in the value iteration method with a probabilistic contraction-expansion transformation is logarithmic in terms of the inverse of a desired precision.}
Furthermore, we {develop upper bounds on} the hitting time for optimization of unknown discrete and continuous time-varying functions whose noisy evaluations are revealed over time.
{The upper bound for a discrete function is logarithmic in terms of the cardinality of the function domain and the upper bound for a continuous function is super-quadratic (but sub-cubic) in terms of the inverse of a desired precision.}
In this framework, we show that convex functions are learned faster than non-convex functions.
Finally, {an upper bound on} the hitting time is {developed} for a time-varying linear model with additive noise {under} the notion of shape dominance for discrete functions.
{Future research directions include:} studying how an environment with time-varying parameters {modeled by} transition probabilities and rewards affects the Bellman transformation and its fixed point, obtaining upper bounds on the {rate} of change {of the} time-varying parameters such that the time-varying fixed points are achievable after a hitting time{, and studying} the effect of an adversary in applications of reinforcement learning whose computations are performed via edge computing.

\bibliographystyle{unsrt}
\bibliography{sigproc}

\begin{thebibliography}{10}

\bibitem{sun2019optimization}
Ruoyu Sun.
\newblock Optimization for deep learning: theory and algorithms.
\newblock {\em arXiv preprint arXiv:1912.08957}, 2019.

\bibitem{gu2020implicit}
Fangda Gu, Heng Chang, Wenwu Zhu, Somayeh Sojoudi, and Laurent El~Ghaoui.
\newblock Implicit graph neural networks.
\newblock {\em Advances in Neural Information Processing Systems}, 33, 2020.

\bibitem{bottou2013counterfactual}
L{\'e}on Bottou, Jonas Peters, Joaquin Qui{\~n}onero-Candela, Denis~X Charles,
  D~Max Chickering, Elon Portugaly, Dipankar Ray, Patrice Simard, and
  Ed~Snelson.
\newblock Counterfactual reasoning and learning systems: The example of
  computational advertising.
\newblock {\em The Journal of Machine Learning Research}, 14(1):3207--3260,
  2013.

\bibitem{mulvaney2020load}
Julie Mulvaney-Kemp, Salar Fattahi, and Javad Lavaei.
\newblock Load variation enables escaping poor solutions of time-varying
  optimal power flow.
\newblock In {\em 2020 IEEE Power \& Energy Society General Meeting (PESGM)},
  pages 1--5. IEEE, 2020.

\bibitem{park2020homotopy}
SangWoo Park, Elizabeth Glista, Javad Lavaei, and Somayeh Sojoudi.
\newblock Homotopy method for finding the global solution of post-contingency
  optimal power flow.
\newblock In {\em 2020 American Control Conference (ACC)}, pages 3126--3133.
  IEEE, 2020.

\bibitem{rao2003constrained}
Christopher~V Rao, James~B Rawlings, and David~Q Mayne.
\newblock Constrained state estimation for nonlinear discrete-time systems:
  Stability and moving horizon approximations.
\newblock {\em IEEE transactions on automatic control}, 48(2):246--258, 2003.

\bibitem{ajalloeian2020inexact}
Amirhossein Ajalloeian, Andrea Simonetto, and Emiliano Dall’Anese.
\newblock Inexact online proximal-gradient method for time-varying convex
  optimization.
\newblock In {\em 2020 American Control Conference (ACC)}, pages 2850--2857.
  IEEE, 2020.

\bibitem{dimitri2017dynamic}
P~Bertsekas Dimitri.
\newblock {\em Dynamic programming and optimal control.}
\newblock Athena Scientific, 2017.

\bibitem{chang2013simulation}
Hyeong~Soo Chang, Jiaqiao Hu, Michael~C Fu, and Steven~I Marcus.
\newblock {\em Simulation-based algorithms for Markov decision processes}.
\newblock Springer Science \& Business Media, 2013.

\bibitem{coulom2006efficient}
R{\'e}mi Coulom.
\newblock Efficient selectivity and backup operators in monte-carlo tree
  search.
\newblock In {\em International conference on computers and games}, pages
  72--83. Springer, 2006.

\bibitem{browne2012survey}
Cameron~B Browne, Edward Powley, Daniel Whitehouse, Simon~M Lucas, Peter~I
  Cowling, Philipp Rohlfshagen, Stephen Tavener, Diego Perez, Spyridon
  Samothrakis, and Simon Colton.
\newblock A survey of monte carlo tree search methods.
\newblock {\em IEEE Transactions on Computational Intelligence and AI in
  games}, 4(1):1--43, 2012.

\bibitem{fu2017markov}
Michael~C Fu.
\newblock Markov decision processes, alphago, and monte carlo tree search: Back
  to the future.
\newblock In {\em Leading Developments from INFORMS Communities}, pages 68--88.
  INFORMS, 2017.

\bibitem{van1998learning}
Benjamin Van~Roy.
\newblock {\em Learning and value function approximation in complex decision
  processes}.
\newblock PhD thesis, Massachusetts Institute of Technology, 1998.

\bibitem{tsitsiklis1996feature}
John~N Tsitsiklis and Benjamin Van~Roy.
\newblock Feature-based methods for large scale dynamic programming.
\newblock {\em Machine Learning}, 22(1-3):59--94, 1996.

\bibitem{van2006performance}
Benjamin Van~Roy.
\newblock Performance loss bounds for approximate value iteration with state
  aggregation.
\newblock {\em Mathematics of Operations Research}, 31(2):234--244, 2006.

\bibitem{busoniu2010reinforcement}
Lucian Busoniu, Robert Babuska, Bart De~Schutter, and Damien Ernst.
\newblock {\em Reinforcement learning and dynamic programming using function
  approximators}, volume~39.
\newblock CRC press, 2010.

\bibitem{satyanarayanan2017emergence}
Mahadev Satyanarayanan.
\newblock The emergence of edge computing.
\newblock {\em Computer}, 50(1):30--39, 2017.

\bibitem{li2018learning}
He~Li, Kaoru Ota, and Mianxiong Dong.
\newblock Learning iot in edge: Deep learning for the internet of things with
  edge computing.
\newblock {\em IEEE network}, 32(1):96--101, 2018.

\bibitem{mach2017mobile}
Pavel Mach and Zdenek Becvar.
\newblock Mobile edge computing: A survey on architecture and computation
  offloading.
\newblock {\em IEEE Communications Surveys \& Tutorials}, 19(3):1628--1656,
  2017.

\bibitem{isakov2019survey}
Mihailo Isakov, Vijay Gadepally, Karen~M Gettings, and Michel~A Kinsy.
\newblock Survey of attacks and defenses on edge-deployed neural networks.
\newblock In {\em 2019 IEEE High Performance Extreme Computing Conference
  (HPEC)}, pages 1--8. IEEE, 2019.

\bibitem{ansari2020security}
Mohammad~S Ansari, Saeed~H Alsamhi, Yuansong Qiao, Yuhang Ye, and Brian Lee.
\newblock Security of distributed intelligence in edge computing: Threats and
  countermeasures.
\newblock In {\em The Cloud-to-Thing Continuum}, pages 95--122. Palgrave
  Macmillan, Cham, 2020.

\bibitem{xiao2019edge}
Yinhao Xiao, Yizhen Jia, Chunchi Liu, Xiuzhen Cheng, Jiguo Yu, and Weifeng Lv.
\newblock Edge computing security: State of the art and challenges.
\newblock {\em Proceedings of the IEEE}, 107(8):1608--1631, 2019.

\bibitem{powell2009you}
Warren~B Powell.
\newblock What you should know about approximate dynamic programming.
\newblock {\em Naval Research Logistics (NRL)}, 56(3):239--249, 2009.

\bibitem{liu2018solution}
Lantao Liu and Gaurav~S Sukhatme.
\newblock A solution to time-varying markov decision processes.
\newblock {\em IEEE Robotics and Automation Letters}, 3(3):1631--1638, 2018.

\bibitem{fenghitting}
Han Feng, Ali Yekkehkhany, and Javad Lavaei.
\newblock A hitting time analysis of non-convex optimization with time-varying
  revelations.
\newblock {\em https://lavaei.ieor.berkeley.edu/Online\_opt\_2020\_1.pdf},
  2020.

\bibitem{calafiore2005uncertain}
Giuseppe Calafiore and Marco~C Campi.
\newblock Uncertain convex programs: randomized solutions and confidence
  levels.
\newblock {\em Mathematical Programming}, 102(1):25--46, 2005.

\bibitem{campi2008exact}
Marco~C Campi and Simone Garatti.
\newblock The exact feasibility of randomized solutions of uncertain convex
  programs.
\newblock {\em SIAM Journal on Optimization}, 19(3):1211--1230, 2008.

\bibitem{8299432}
Marco~Claudio Campi, Simone Garatti, and Federico~Alessandro Ramponi.
\newblock A general scenario theory for nonconvex optimization and decision
  making.
\newblock {\em IEEE Transactions on Automatic Control}, 63(12):4067--4078,
  2018.

\bibitem{hassibi1999control}
Arash Hassibi, Stephen~P Boyd, and Jonathan~P How.
\newblock Control of asynchronous dynamical systems with rate constraints on
  events.
\newblock In {\em Proceedings of the 38th IEEE Conference on Decision and
  Control (Cat. No. 99CH36304)}, volume~2, pages 1345--1351. IEEE, 1999.

\bibitem{ding2021escaping}
Yuhao Ding, Javad Lavaei, and Murat Arcak.
\newblock Escaping spurious local minimum trajectories in online time-varying
  nonconvex optimization.
\newblock In {\em 2021 American Control Conference (ACC)}, pages 454--461.
  IEEE, 2021.

\bibitem{fattahi2020absence}
Salar Fattahi, Cedric Josz, Reza Mohammadi, Javad Lavaei, and Somayeh Sojoudi.
\newblock Absence of spurious local trajectories in time-varying optimization:
  A control-theoretic perspective.
\newblock In {\em 2020 IEEE Conference on Control Technology and Applications
  (CCTA)}, pages 140--147. IEEE, 2020.

\bibitem{massicot2019line}
Olivier Massicot and Jakub Marecek.
\newblock On-line non-convex constrained optimization.
\newblock {\em arXiv preprint arXiv:1909.07492}, 2019.

\end{thebibliography}

\end{document}